\DeclareMathOperator\defect{def}
\DeclareMathOperator\Gal{Gal}
\DeclareMathOperator\conv{conv}
\DeclareMathOperator\QB{QB}
\DeclareMathOperator\wt{wt}
\DeclareMathOperator\avg{avg}
\DeclareMathOperator\supp{supp}
\DeclareMathOperator\DBG{DBG}
\DeclareMathOperator\LP{LP}
\DeclareMathOperator\cl{cl}
\DeclareMathOperator\rk{rk}
\DeclareMathOperator\paths{paths}
\DeclareMathOperator\pad{pad}
\DeclareMathOperator\wts{wts}
\def\GL{{\mathrm{GL}}}
\def\dom{{\mathrm{dom}}}
\author{Felix Schremmer}\date{\today}
\title{Affine Deligne--Lusztig varieties via the double Bruhat graph II:\\Iwahori--Hecke algebra}
\numberwithin{equation}{section}
\newtheorem{theorem}[equation]{Theorem}
\newtheorem{proposition}[equation]{Proposition}
\newtheorem{lemma}[equation]{Lemma}
\newtheorem{corollary}[equation]{Corollary}
\newtheorem{conjecture}[equation]{Conjecture}
\theoremstyle{definition}
\newtheorem{definition}[equation]{Definition}
\theoremstyle{remark}
\newtheorem{example}[equation]{Example}
\newtheorem{remark}[equation]{Remark}
\def\abs#1{{\left\lvert{#1}\right\rvert}}
\let\oldqedsymbol\qedsymbol
\def\qedaddendum{}
\def\qedsymbol{\oldqedsymbol\qedaddendum}
\def\af{{\mathrm{af}}}
\def\rightqed{\pushQED{\qed}\qedhere\popQED}
\newcommand{\crossRef}[3][]{\cite[\protect\NoHyper#2\ref{ex-#3}#1\protect\endNoHyper]{Schremmer2023_orbits}}
\begin{document}
\maketitle
\begin{abstract}
We introduce a new language to describe the geometry of affine Deligne--Lusztig varieties in affine flag varieties. This second part of a two paper series uses this new language, i.e.\ the double Bruhat graph, to describe certain structure constants of the Iwahori--Hecke algebra. As an application, we describe nonemptiness and dimension of affine Deligne--Lusztig varieties for most elements of the affine Weyl group and arbitrary $\sigma$-conjugacy classes.
\end{abstract}
\section{Introduction}
In a seminal paper, Deligne--Lusztig \cite{Deligne1976} introduced a class of varieties, which they use to describe many representations of finite groups of Lie type. An analogous construction yields the so-called affine Deligne--Lusztig varieties, which play an important role e.g.\ in the reduction of Shimura varieties \cite{Rapoport2005, He2018}. Continuing the treatment of \cite{Schremmer2023_orbits}, we study affine Deligne--Lusztig varieties in affine flag varieties.

Let $G$ be a reductive group defined over a local field $F$, whose completion of the maximal unramified extension we denote by $\breve F$. Denote the Frobenius of $\breve F/F$ by $\sigma$ and pick a $\sigma$-stable Iwahori subgroup $I\subseteq G(\breve F)$. The affine Deligne--Lusztig variety $X_x(b)$ associated to two elements $x,b\in G(\breve F)$ is the reduced ind-subscheme of the affine flag variety $G(\breve F)/I$ with geometric points
\begin{align*}
X_x(b) = \{g\in G(\breve F)/I\mid g^{-1} b\sigma(g) \in IxI\}.
\end{align*}
Observe that the isomorphism type of $X_x(b)$ only depends on the $\sigma$-conjugacy class
\begin{align*}
[b] = \{g^{-1} b\sigma(g)\mid g\in G(\breve F)\}
\end{align*}
and the Iwahori double coset $IxI\subseteq G(\breve F)$. These Iwahori double cosets are naturally parametrized by the extended affine Weyl group $\widetilde W$ of $G$, and we get
\begin{align*}
G(\breve F) = \bigsqcup_{x\in\widetilde W}I\dot xI.
\end{align*}
Many geometric properties of the double cosets $I\dot x I$ for various $x\in\widetilde W$ can be understood via the corresponding Iwahori--Hecke algebra $\mathcal H = \mathcal H(\widetilde W)$. This algebra and its representation theory received tremendous interest since the discovery of the Satake isomorphism \cite{Satake1963}. There are a few different and mostly equivalent constructions of this algebra in use. For now, we summarize that this is an algebra over a suitable base field or ring with a basis given by formal variables $T_x$ for $x\in\widetilde W$. The element $T_x\in\mathcal H$ can be thought of as the representation-theoretic analogue of the Iwahori double coset $IxI\subseteq G(\breve F)$. E.g.\ if $x,y\in\widetilde W$, we can write
\begin{align*}
IxI\cdot IyI = \bigcup_{z} IzI
\end{align*}
where the union is taken over all $z\in\widetilde W$ such that the $T_z$-coefficient of $T_x T_y\in\mathcal H$ is non-zero. For a general overview over the structure theory of Iwahori--Hecke algebras and its applications to the geometry of the affine flag variety, we refer to \cite{He2015}.

The set of $\sigma$-conjugacy classes $B(G) = \{[b]\mid b\in G(\breve F)\}$ is the second main object of interest in the definition of affine Deligne--Lusztig varieties. It is a celebrated result of Kottwitz \cite{Kottwitz1985, Kottwitz1997} that each $\sigma$-conjugacy class $[b]$ is uniquely determined by two invariants, known as its Newton point and its Kottwitz point. From He \cite[Theorem~3.7]{He2014}, we get a parametrization of $B(G)$ using the extended affine Weyl group $\widetilde W$. For each $x\in\widetilde W$, consider its $\sigma$-conjugacy class in $\widetilde W$, denoted
\begin{align*}
\mathcal O = \{y^{-1} x\sigma(y)\mid y\in\widetilde W\}.
\end{align*}
Two elements that are $\sigma$-conjugate in $\widetilde W$ will also be $\sigma$-conjugate in $G(\breve F)$, but the converse does not hold true in general. We obtain a surjective but not injective map
\begin{align*}
\{\text{$\sigma$-conjugacy classes }\mathcal O\subseteq \widetilde W\}\rightarrow B(G),
\end{align*}
sending $\mathcal O$ to $[\dot x]\in B(G)$ for any $x\in\mathcal O$.

The analogous construction in the Iwahori--Hecke algebra is the formation of a $\sigma$\mbox{-}twis\-ted cocenter, i.e.\ the quotient of $\mathcal H$ by the submodule $[\mathcal H,\mathcal H]_\sigma$ generated by
\begin{align*}
[h,h']_\sigma = h h' - h' \sigma(h),\qquad h,h'\in\mathcal H.
\end{align*}
An important result of He--Nie \cite[Theorem~C]{He2014b} gives a full description of this cocenter. For each $\sigma$-conjugacy class $\mathcal O\subseteq \widetilde W$ and any two elements of minimal length $x_1, x_2\in \mathcal O$, they prove that the images of $T_{x_1}$ and $T_{x_2}$ in the cocenter of $\mathcal H$ agree. Denoting the common image by $T_{\mathcal O}$, they prove moreover that these $T_{\mathcal O}$ form a basis of the cocenter, parametrized by all $\sigma$-conjugacy classes $\mathcal O\subseteq \widetilde W$.

With these preferred bases $\{T_x\}$ of $\mathcal H$ and $\{T_{\mathcal O}\}$ of the quotient, we obtain structure constants expressing the image of each $T_x$ in the cocenter as linear combination of the $T_{\mathcal O}$'s. These are known as class polynomials, so we write
\begin{align*}
T_x \equiv \sum_{\substack{\mathcal O\subseteq\widetilde W\\\sigma\text{-conj.\ class}}}f_{x,\mathcal O}T_{\mathcal O}\pmod{[\mathcal H,\mathcal H]_\sigma}.
\end{align*}
These representation-theoretic structure constants are often hard to determine. However, they are very useful for studying affine Deligne--Lusztig varieties, especially the following main three questions:
\begin{enumerate}[(Q1)]
\item When is $X_x(b)$ empty? Equivalently, when is the Newton stratum empty?
\item If $X_x(b)\neq\emptyset$, what is its dimension?
\item How many top dimensional irreducible components, up to the action of the $\sigma$-centralizer of $b$, does $X_x(b)$ have?
\end{enumerate}
It is an important result of He that these main questions can be fully answered in terms of the class polynomials, cf.\ \cite[Theorem~6.1]{He2014} and \cite[Theorem~2.19]{He2015}. The class polynomials can moreover be used to count rational points of Newton strata, cf.\ \cite[Proposition~3.7]{He2022}.

In the previous article \cite{Schremmer2023_orbits}, we showed that the same main questions can also be answered, in some cases, using the combinatorial notion of a double Bruhat graph. This is an explicitly described finite graph, introduced by Naito--Watanabe \cite[Section~5.1]{Naito2017} in order to describe periodic $R$-polynomials. Following a result of Görtz--Haines--Kottwitz--Reumann \cite[Section~6]{Goertz2006} comparing affine Deligne--Lusztig varieties with certain intersections in the affine flag variety, we showed that the double Bruhat graph appears naturally as a way to encode certain subvarieties of the affine flag variety.

Write $x = wt^\mu\in\widetilde W,v\in W$, and assume that a regularity condition of the form
\begin{align*}
\forall\alpha\in\Phi^+:~\langle v^{-1} \mu,\alpha\rangle\gg \langle \mu^{\dom}-\nu(b),2\rho\rangle
\end{align*}
is satisfied. Assume moreover that the group $G$ is split over $F$. Then \crossRef{Corollary~}{cor:superregularADLV} shows that the questions of nonemptiness, dimension and top dimensional irreducible components are determined by the set of paths from $v$ to $wv$ in the double Bruhat graph that are increasing with respect to some fixed reflection order $\prec$ and of weight $\mu^{\dom} - \nu(b)$. Our first main result states that this set of paths determines the full class polynomial, and that the assumption of a split group can be removed. 
\begin{theorem}[{Cf.\ Theorem~\ref{thm:classPolynomialViaDbg}}]\label{thm:introClassPolynomials}
Assume that the group $G$ is quasi-split. Let $x = w\varepsilon^\mu\in\widetilde W,v\in W$ and $\mathcal O\subseteq \widetilde W$ such that a regularity condition of the form
\begin{align*}
\forall\alpha\in\Phi^+:~\langle v^{-1} \mu,\alpha\rangle\gg \langle \mu^{\dom}-\nu(\mathcal O),2\rho\rangle
\end{align*}
is satisfied. Then the class polynomial $f_{x,\mathcal O}$ can be expressed in terms of paths in the double Bruhat graph from $v$ to $\sigma(wv)$ that are increasing with respect to some fixed reflection order. For a suitable parametrization of the Iwahori--Hecke algebra as an algebra over the polynomial ring $\mathbb Z[Q]$ (Definition~\ref{def:IHAlgebra}), the class polynomial is explicitly given by
\begin{align*}
f_{x,\mathcal O} = \sum_{p} Q^{\ell(p)},
\end{align*}
where the sum is taken over all paths $p$ in the double Bruhat graph from $v$ to $\sigma(wv)$ that are increasing with respect to some fixed reflection order and  such that $\nu(\mathcal O)$ is the $\sigma$-average of $v^{-1}\mu-\wt(p)$.
\end{theorem}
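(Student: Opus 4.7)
The plan is to work directly inside the Iwahori--Hecke algebra and reduce $T_x$ modulo $[\mathcal H,\mathcal H]_\sigma$ by an explicit sequence of elementary moves, each corresponding to either a cyclic shift (staying at a vertex of the double Bruhat graph) or to a reflection step that picks up a factor $Q$ and traverses an edge. The whole point of the regularity hypothesis $\langle v^{-1}\mu,\alpha\rangle\gg\langle\mu^{\dom}-\nu(\mathcal O),2\rho\rangle$ is that, in this semi-infinite regime, each move becomes unambiguous and every computation reduces to a finite combinatorial problem on the double Bruhat graph.

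First I would fix a Bernstein--Lusztig-type presentation, splitting $T_x$ through the factor $\varepsilon^{v^{-1}\mu}$ in such a way that the deep regularity of $v^{-1}\mu$ makes all length-additive factorizations hold on the nose. Starting at the vertex $v$, I then apply the cocenter relation $T_h T_{h'}\equiv T_{h'}T_{\sigma(h)}$ to push factors of $\Theta$ across $T_w$; each such move either preserves the underlying Weyl group element (a cyclic shift at a vertex of the DBG) or, when the pushed element is not of minimal length, splits via the quadratic Hecke relation into a length-additive contribution plus a term of the form $Q\cdot T_{\text{(reflected element)}}$, precisely encoding an edge of the DBG. Iterating gives a sum over sequences of edges, i.e.~over paths in the DBG starting at $v$, and the accumulated scalar on each path is $Q^{\ell(p)}$.

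The choice of a reflection order $\prec$ enters at the next step. It prescribes, at each stage, which available edge to traverse first, and a Matsumoto-type argument shows that for every term arising in the expansion there is a unique representative path that is $\prec$-increasing; all other enumerations differ by relations that vanish modulo $[\mathcal H,\mathcal H]_\sigma$. After all reductions, the terminal endpoint is identified with $\sigma(wv)$ because the cocenter twists the source $v$ by $\sigma$ on one side and by $w$ on the other. Finally, the residual translation part of the minimal-length element reached at the end of the path is $v^{-1}\mu-\wt(p)$, and its $\sigma$-average is the Newton point of the resulting $\sigma$-conjugacy class; this identifies which $T_{\mathcal O}$ appears and produces the stated weight condition on $p$.

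The main obstacle I anticipate is the bookkeeping rather than any single deep step: one has to show that the reduction procedure terminates, that no path is counted twice and none is missed, and that the final term lies in the correct $\sigma$-conjugacy class $\mathcal O$. The cleanest way to handle this is an induction on $\langle\mu^{\dom}-\nu(\mathcal O),2\rho\rangle$, with the base case being the straight element situation from \crossRef{}{cor:superregularADLV} where no reductions occur; the inductive step then amounts to peeling off one edge of the DBG, checking compatibility of the Newton-point formula under this peeling, and invoking the regularity inequality to guarantee that all intermediate elements still satisfy a sufficient regularity condition so that the induction hypothesis applies.
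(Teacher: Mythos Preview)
Your outline diverges from the paper's route and, as written, has a genuine gap.

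\textbf{How the paper actually proceeds.} The paper does \emph{not} carry out an iterated reduction of $T_x$ in the cocenter. Instead it first proves a product formula (Theorem~\ref{thm:IHProduct}/\ref{thm:introMultiplication}) expressing the structure constants of $T_{x_2}T_{z}$ in terms of $\prec$-increasing paths; the hard combinatorics is hidden there and rests on the Naito--Watanabe bijections (Theorem~\ref{thm:dbgRecursions}). The class polynomial statement is then a short corollary: one writes $x=x_1x_2$ length-additively with $x_1=wv\varepsilon^{\mu_1}$, $x_2=v^{-1}\varepsilon^{\mu_2}$, performs a \emph{single} cocenter move $T_{x_1}T_{x_2}\equiv T_{x_2}\sigma(T_{x_1})$, applies the product formula to $T_{x_2}T_{\sigma(x_1)}$, and reads off $f_{x,\mathcal O}$ from the known class polynomials of the resulting pure translations $\varepsilon^{v^{-1}\mu-\omega}$. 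No induction on $\langle\mu^{\dom}-\nu(\mathcal O),2\rho\rangle$ is used. The paper even remarks (Remark~\ref{rem:classPolynomials}(a)) that one could alternatively compare the He--Nie reduction algorithm directly with Theorem~\ref{thm:dbgRecursions}; your sketch is closer in spirit to that alternative.

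\textbf{The gap.} The step you label a ``Matsumoto-type argument'' is exactly the difficult content, and it is not supplied. Iterating the quadratic Hecke relation together with cyclic shifts does produce a sum over sequences of reflections with powers of $Q$, but those sequences are \emph{not} a priori $\prec$-increasing paths in the double Bruhat graph, nor do they obviously come with the correct multiplicities. Showing that the terms can be reorganized into a sum over $\prec$-increasing paths, with weight exactly $v^{-1}\mu-\wt(p)$ and each counted once, is precisely what the Naito--Watanabe bijections of Theorem~\ref{thm:dbgRecursions} accomplish, and that argument is intricate (it requires the padding construction and a careful case analysis, cf.\ Lemma~\ref{lem:dbgRecursions}). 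Your proposal neither invokes nor reproves this; asserting that ``all other enumerations differ by relations that vanish modulo $[\mathcal H,\mathcal H]_\sigma$'' is not correct as stated, since the reorganization into increasing paths happens already at the level of the Hecke algebra, not only in the cocenter. The proposed induction on $\langle\mu^{\dom}-\nu(\mathcal O),2\rho\rangle$ does not substitute for this: peeling off a single DBG edge does not decrease that quantity by a controlled amount, and the base case you cite concerns affine Deligne--Lusztig varieties rather than class polynomials. To make your approach go through, you would need to supply (or cite) the bijections of Theorem~\ref{thm:dbgRecursions} and then organize the reduction along the lines of Lemma~\ref{lem:YsetIdentities}.
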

The assumption of a quasi-split group can be removed following \cite[Section~2]{Goertz2015}, though it requires more cumbersome notation to write down statements in full generality, cf.\ \cite[Section~4.2]{Schremmer2022_newton}.

We will prove Theorem~\ref{thm:introClassPolynomials} as a consequence of the following more fundamental result, computing the structure constants of the multiplication of our standard basis vectors in $\mathcal H$.
\begin{theorem}[{Cf.\ Theorem~\ref{thm:IHProduct}}]\label{thm:introMultiplication}
Let $x=w_x\varepsilon^{\mu_x},z=w_z\varepsilon^{\mu_z}\in \widetilde W$ and $v_z\in W$ satisfying a regularity condition of the form
\begin{align*}
\forall \alpha\in \Phi^+:~\langle v_z^{-1}\mu_z,\alpha\rangle \gg \ell(x).
\end{align*}Define polynomials $\varphi_{x,z,y}$ via
\begin{align*}
T_x T_z = \sum_{y\in \widetilde W} \varphi_{x,z,y}T_{y}\in \mathcal H(\widetilde W).
\end{align*}
Pick an element $y=w_y\varepsilon^{\mu_y}\in \widetilde W$ and $v_x\in W$ such that a regularity condition of the form
\begin{align*}
\forall\alpha\in \Phi^+:~\langle v_x^{-1}\mu_x,\alpha\rangle \gg \ell(x)+\ell(z)-\ell(y)
\end{align*}
is satisfied. Then we can describe the structure constant $\varphi_{x,z,y}$ in terms of paths in the double Bruhat graph. Explicitly, we have $\varphi_{x,z,y}=0$ unless $w_y = (w_x v_x)^{-1} v_y$. In this case, we have
\begin{align*}
\varphi_{x,z,y} = \sum_{p} Q^{\ell(p)},
\end{align*}
where the sum is taken over all paths in the double Bruhat graph from $v_x$ to $w_z v_z$ that are increasing with respect to some reflection order and of weight
\begin{align*}
\wt(p) = v_x^{-1}\mu_x + v_z^{-1}\mu_z - (w_z v_z)^{-1} \mu_y.
\end{align*}
\end{theorem}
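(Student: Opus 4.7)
The plan is to compute $T_x T_z$ directly within $\mathcal H$ using the Bernstein--Lusztig presentation, and to exhibit the expansion in the basis $\{T_y\}$ as a sum indexed by increasing paths in the double Bruhat graph. The two regularity hypotheses play distinct roles: the weaker one (on $\mu_z$ relative to $\ell(x)$) makes the multiplication length--additive on the translation side, so that $T_x T_z$ splits into a product involving the translation elements $\Theta^{v_x^{-1}\mu_x}$ and $\Theta^{v_z^{-1}\mu_z}$ together with finite Weyl factors; the stronger one (on $\mu_x$) forces every $y$ with $\varphi_{x,z,y}\neq 0$ to satisfy the asserted relation between $w_y$ and the endpoint of a path, as well as a bounded cocharacter defect.

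First I would fix reduced decompositions compatible with the choices of $v_x$ and $v_z$ and rewrite, up to a length--additive factorization,
\begin{align*}
T_x\, T_z = T_{w_x v_x}\cdot \Theta^{v_x^{-1}\mu_x}\, T_{v_x^{-1} w_z v_z}\cdot \Theta^{v_z^{-1}\mu_z}.
\end{align*}
The central task is then to commute $\Theta^{v_x^{-1}\mu_x}$ past $T_{v_x^{-1} w_z v_z}$ using the Bernstein commutation relation iteratively along the chosen reduced expression. Each individual commutation splits the current term into a dominant part (extending the finite Weyl factor by one simple reflection) and a ``defect'' part carrying a factor of $Q$ and corresponding to an affine reflection at the current vertex --- i.e.\ an edge of the double Bruhat graph.

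The core combinatorial step is to match a sequence of defect choices during this iteration with a path in the double Bruhat graph from $v_x$ to $w_z v_z$ that is increasing with respect to the reflection order dictated by the reduced expression of $v_x^{-1} w_z v_z$. The accumulated translation along such a sequence is precisely $v_x^{-1}\mu_x + v_z^{-1}\mu_z - (w_z v_z)^{-1}\mu_y$, matching the claimed $\wt(p)$, and the tracking of the finite Weyl factor through the iteration yields $w_y = (w_x v_x)^{-1} v_y$. That the count does not depend on the particular reduced expression chosen is the braid--invariance of increasing path counts in the double Bruhat graph established in \cite{Schremmer2023_orbits}; this in particular reconciles the various reflection orders that appear when one varies the reduced expression.

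The main obstacle is controlling the termination of this expansion. A priori, repeated Bernstein commutations could generate infinitely many lower--order terms, and without regularity one cannot conclude either that the resulting $\mu_y$ stay in a suitable cone or that $w_y$ is determined by the endpoint of a path. The superregularity of $\mu_x$ enters exactly here: it pins down the finite Weyl factor of any surviving $y$ to be $(w_x v_x)^{-1} v_y$ and bounds $(w_z v_z)^{-1}\mu_y$ so that only finitely many admissible paths contribute. A secondary obstacle is the non--split case, where $\sigma$ acts non--trivially on $\widetilde W$; I would absorb the $\sigma$--twist into the bookkeeping of the Bernstein presentation, noting that the double Bruhat graph itself depends only on the finite root system on which $\sigma$ acts compatibly, so the combinatorial identification goes through unchanged.
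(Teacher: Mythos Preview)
Your approach via the Bernstein--Lusztig presentation is genuinely different from the paper's, but it has a real gap at the commutation step. The Bernstein relation for $\Theta^\lambda T_s$ does \emph{not} split into merely two terms: for dominant $\lambda$ with $\langle\lambda,\alpha\rangle=n$, one obtains roughly $n$ defect terms $\Theta^{\lambda-k\alpha^\vee}$ (one for each $0\le k<n$), not a single edge. Iterating along a reduced word of $v_x^{-1}w_zv_z$ therefore produces a sum indexed by far more data than ``subset of positions'', and your claimed bijection with increasing paths in $\DBG(W)$ is not the one you describe. Relatedly, a reduced expression of $v_x^{-1}w_zv_z$ only orders the inversions of that element, which is a proper subset of $\Phi^+$ in general; increasing paths in $\DBG(W)$ may use \emph{any} positive root as an edge label and have length up to $\#\Phi^+$, so ``the reflection order dictated by the reduced expression'' cannot be what governs them. (A minor point: the $\sigma$-twist is irrelevant here --- the statement concerns the untwisted product in $\mathcal H$; the Frobenius only enters for class polynomials.)

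For comparison, the paper avoids the Bernstein presentation entirely. It introduces auxiliary multisets $Y(x,u)$ encoding the coefficients of $T_xT_z$ for $z$ superregular with $\LP(z)=\{u\}$, establishes their behaviour under right multiplication by simple affine reflections $r_a$ (Lemma~\ref{lem:YsetFacts}), and matches this recursion against the recursion for the weight multisets $\wts(\cdot\Rightarrow\cdot\dashrightarrow\cdot)$ proved in Theorem~\ref{thm:dbgRecursions} via the Naito--Watanabe bijections. The key identity (Lemma~\ref{lem:YsetIdentities}) is then fed into an induction on the exponent $e$ of $Q$ (Lemma~\ref{lem:IHProduct}), where one writes $x=x_1x_2x_3$ length-additively with $x_1,x_3$ pure translations to peel off contributions. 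Theorem~\ref{thm:introMultiplication} is finally read off from the stronger Theorem~\ref{thm:IHProduct} as in Remark~\ref{rem:IHProduct}(c): the extra regularity $\ell(x)+\ell(z)-\ell(y)\ll\langle v_x^{-1}\mu_x,\alpha\rangle$ forces length-additivity of $yz$, collapsing one of the two weight multisets to a singleton and yielding the single-path description.
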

Theorem~\ref{thm:IHProduct} below actually proves a stronger statement, requiring only a weaker regularity condition of the form
\begin{align*}
\forall \alpha\in \Phi^+:~\langle v^{-1}\mu_x,\alpha\rangle\gg \ell(x) - \ell(y^{-1}z).
\end{align*}
The resulting description of $\varphi_{x,z,y}$ is more involved however, replacing the single path $p$ by pairs of bounded paths in the double Bruhat graph. Theorem~\ref{thm:introMultiplication} as stated here is sufficient to derive Theorem~\ref{thm:introClassPolynomials}.

So under some very strong regularity conditions, the double Bruhat graph may also be used to understand multiplications of Iwahori double cosets $IxI\cdot IzI$ in $G(\breve F)$. Theorems  \ref{thm:introClassPolynomials} and \ref{thm:introMultiplication} give insight in the generic behaviour of class polynomials and products in the Iwahori--Hecke algebra, solving infinitely many previously intractable questions using a finite combinatorial object. From a practical point of view, this allows us to quickly derive many crucial properties of the weight multisets of the double Bruhat graph by referring to known properties of the Iwahori--Hecke algebra or affine Deligne--Lusztig varieties. Using some of the most powerful tools available to describe affine Deligne--Lusztig varieties and comparing them to the double Bruhat graph, we obtain the following result.
\begin{theorem}[{Cf.\ Theorem~\ref{thm:adlvDimension}}]\label{thm:introADLV}
Let $x = w\varepsilon^\mu\in\widetilde W$ and $v\in W$ satisfying the regularity condition
\begin{align*}
\forall\alpha\in\Phi^+:~\langle v^{-1} \mu,\alpha\rangle\geq 2\rk(G)+14,
\end{align*}
where $\rk(G)$ is the rank of a maximal torus in the group $G$.

Pick an arbitrary $\sigma$-conjugacy class $[b]\in B(G)$. Let $P$ be the set of all paths $p$ in the double Bruhat graph from $v$ to $\sigma(wv)$ that are increasing with respect to some fixed reflection order such that the \emph{$\lambda$-invariant} of $[b]$ (cf.\ \cite[Section~2]{Hamacher2018}) satisfies
\begin{align*}
\lambda(b) = v^{-1}\mu - \wt(p).
\end{align*}

Then $P\neq\emptyset$ if and only if $X_x(b)\neq\emptyset$. If $p$ is a path of maximal length in $P$, then
\begin{align*}
\dim X_x(b) = \frac 12\left(\ell(x)+\ell(p)-\langle \nu(b),2\rho\rangle-\defect(b)\right).
\end{align*}
We give a similar description in terms of the dominant Newton points of $[b]$ rather than the $\lambda$-invariant.
\end{theorem}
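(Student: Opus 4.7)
The plan is to derive Theorem~\ref{thm:introADLV} by combining Theorem~\ref{thm:introClassPolynomials} with He's formula \cite[Theorem~6.1]{He2014}, which expresses the non-emptiness and dimension of $X_x(b)$ in terms of the class polynomials $f_{x,\mathcal O}$ as $\mathcal O$ ranges over the $\sigma$-conjugacy classes of $\widetilde W$ that map to $[b]\in B(G)$. Concretely, one has $X_x(b)\neq\emptyset$ if and only if $f_{x,\mathcal O}\neq 0$ for some $\mathcal O\mapsto [b]$, and the dimension is the maximum over such $\mathcal O$ of a linear combination of $\deg f_{x,\mathcal O}$, $\ell(x)$, the minimal length $\ell(\mathcal O)$, $\langle\nu(b),2\rho\rangle$, and $\defect(b)$.

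First, I would translate He's formula into the language of the double Bruhat graph using Theorem~\ref{thm:introClassPolynomials}. Since $f_{x,\mathcal O}=\sum_p Q^{\ell(p)}$, with $p$ ranging over the increasing paths from $v$ to $\sigma(wv)$ whose weight $v^{-1}\mu-\wt(p)$ has $\sigma$-average $\nu(\mathcal O)$, we have $\deg f_{x,\mathcal O}=\max_p\ell(p)$. The double sum over pairs $(\mathcal O,p)$ underlying He's formula can then be reorganized as a single sum over eligible paths, grouped by the $\sigma$-conjugacy class $\mathcal O_p$ they determine.

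The next step is to match the path condition $\lambda(b)=v^{-1}\mu-\wt(p)$ with the condition that $\mathcal O_p$ map to $[b]$. Under the regularity hypothesis, I expect each such path to correspond, via the combinatorics used in the previous article \cite{Schremmer2023_orbits}, to a canonical minimal-length element of some $\mathcal O_p\subseteq\widetilde W$, and the image class in $B(G)$ to be characterized by the pair $(\nu(\mathcal O_p),\lambda(\mathcal O_p))$ via the fine invariant of \cite{Hamacher2018}. The prescribed equality $\lambda(b)=v^{-1}\mu-\wt(p)$ then cuts out exactly those paths whose $\mathcal O_p$ maps to $[b]$; summing over all of these recovers the full set of $\mathcal O\mapsto[b]$ with $f_{x,\mathcal O}\neq 0$ that are relevant to He's formula.

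Finally, to obtain the explicit dimension formula one must pin down $\ell(\mathcal O_p)$ for a $p$ of maximal length in $P$. This will follow from the minimal-length formula of He--Nie \cite{He2014b} together with the identity expressing $\ell(\mathcal O)$ in terms of $\langle\nu(\mathcal O),2\rho\rangle$ and a defect-type correction for the specific $\mathcal O_p$ realizing the maximum in He's formula. The main obstacle is precisely this last matching: one has to verify that the classes attaining the maximum in He's formula are exactly those corresponding to paths with the prescribed $\lambda$-invariant, and that the $\ell(\mathcal O)$-correction evaluates to $\langle\nu(b),2\rho\rangle+\defect(b)$, yielding
\begin{align*}
\dim X_x(b)=\tfrac12\bigl(\ell(x)+\ell(p)-\langle\nu(b),2\rho\rangle-\defect(b)\bigr).
\end{align*}
The parallel description via dominant Newton points follows by converting $\lambda(b)$ back to $\nu(b)$ through the $\sigma$-average, since Theorem~\ref{thm:introClassPolynomials} already records that the $\sigma$-average of $v^{-1}\mu-\wt(p)$ recovers $\nu(\mathcal O_p)=\nu(b)$.
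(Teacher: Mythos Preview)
Your approach has a genuine gap: Theorem~\ref{thm:introClassPolynomials} does not apply under the fixed regularity bound $2\rk(G)+14$. The hypothesis of Theorem~\ref{thm:introClassPolynomials} reads
\[
\forall\alpha\in\Phi^+:\quad \langle v^{-1}\mu,\alpha\rangle \gg \langle \mu^{\dom}-\nu(\mathcal O),2\rho\rangle,
\]
so the required regularity of $x$ grows with the distance from $\nu(\mathcal O)$ to $\mu^{\dom}$. If $[b]$ is basic (or just has small Newton point), this right-hand side is of size roughly $\ell(x)$, and no fixed constant independent of $[b]$ suffices. Your argument therefore proves only the weaker Proposition~\ref{prop:ADLVSuperregular}, whose regularity bound $B'$ depends on the input bound $B\geq\langle v^{-1}\mu-\nu(b),2\rho\rangle$; the paper states this explicitly (Remark after Theorem~\ref{thm:adlvDimension}: \enquote{The main advantage of Theorem~\ref{thm:adlvDimension} over Proposition~\ref{prop:ADLVSuperregular} comes from the different regularity conditions}).

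The paper's proof of Theorem~\ref{thm:adlvDimension} avoids Theorem~\ref{thm:classPolynomialViaDbg} entirely. Instead it writes $x=x_1x_2$ length-additively with $x_2$ of \emph{bounded} length (bounded by the root-system constant $C$), applies the cocenter relation $T_x\equiv T_{\sigma^{-1}(x_2)}T_{x_1}$, and expands the product using the multiset $Y(\sigma^{-1}(x_2),wv)$. Each term $yx_1$ that appears has dominant translation part, hence is \emph{cordial}; for cordial elements the full nonemptiness pattern and dimension of $X_{yx_1}(b)$ for arbitrary $[b]$ are known explicitly by \cite{Milicevic2020,He2021a,Viehmann2021}. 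The double Bruhat identity of Lemma~\ref{lem:YsetIdentities} then matches this decomposition with the multiset $\wts(v\Rightarrow\sigma(wv))$, and Corollary~\ref{cor:wtsSpecialEstimates} supplies the needed facts about $\wts(1\Rightarrow\sigma(\tilde w_y))$ on the cordial side. This is what buys the uniform regularity constant; your route through class polynomials cannot achieve that.
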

Theorem~\ref{thm:introADLV} gives full answers to the questions (Q1) and (Q2) for arbitrary $[b]\in B(G)$ as long as the element $x\in\widetilde W$ satisfies a somewhat mild regularity condition (being linear in the rank of $G$).


The proofs given in this article are mostly combinatorial in nature, and largely independent of its predecessor article \cite{Schremmer2023_orbits}. We will rely only on some basic facts on the double Bruhat graph established in \crossRef{Section~}{sec:DBG}. The best known ways to compute the structure constants of Theorem~\ref{thm:introMultiplication} and the class polynomials $f_{x,\mathcal O}$ are given by certain recursive relations involving simple affine reflections in the extended affine Weyl group. Similarly, the Deligne--Lusztig reduction method of Görtz--He \cite[Section~2.5]{Deligne1976, Goertz2010b} provides such a recursive method to describe many geometric properties of affine Deligne--Lusztig varieties, in particular the ones studied in this paper series. On the double Bruhat side, these are mirrored by the construction of certain bijections between paths due to Naito--Watanabe \cite[Section~3.3]{Naito2017}. We recall these bijections and derive the corresponding properties of the weight multisets in Section~\ref{sec:DBG}. We study the consequences for the Iwahori--Hecke algebra in Section~\ref{sec:ihalgebra}, and the resulting properties of affine Deligne--Lusztig varieties in Section~\ref{sec:ADLV}.

In Section~\ref{sec:outlook}, we finish this series of two papers by listing a number of further-reaching conjectures, predicting a relationship between the geometry of affine Deligne--Lusztig varieties and paths in the double Bruhat graph in various cases. These conjectures are natural generalizations of our results, and withstand an extensive computer search for counterexamples.

Recall that our main goal is to find and prove a description of the geometry of affine Deligne--Lusztig varieties in the affine flag variety that is as concise and precise as the known analogous statements for the affine Grassmannian (as summarized in \crossRef{Theorem~}{thm:hyperspecial}). Our conjectures and partial results towards proving them suggest that the language of the double Bruhat graph is very useful for this task, and might even be the crucial missing piece towards a full description.

We would like to remark that once a conjecture is found that describes the geometry of $X_x(b)$ for arbitrary $x,b$ in terms of the double Bruhat graph, a proof of such a conjecture might simply consist of a straightforward comparison of the Deligne--Lusztig reduction method due to Görtz--He \cite{Deligne1976, Goertz2010b} with the analogous recursive relations of the double Bruhat graph that are discussed in this article.

\section{Acknowledgements}
The author was partially supported by the German Academic Scholarship Foundation, the Marianne-Plehn programme, the DFG Collaborative Research Centre 326 \emph{GAUS} and the Chinese University of Hong Kong. I thank Eva Viehmann, Xuhua He and Quingchao Yu for inspiring discussions, and Eva Viehmann again for her comments on a preliminary version of this article. I am very grateful for the carefuly reading and helpful comments by the annonymous referee.
\section{Notation}
We fix a non-archimedian local field $F$ whose completion of the maximal unramified extension will be denoted $\breve F$. We write $\mathcal O_F$ and $\mathcal O_{\breve F}$ for the respective rings of integers. Let $\varepsilon \in F$ be a uniformizer. The Galois group $\Gamma = \Gal(\breve F/F)$ is generated by the Frobenius $\sigma$.

In the context of Shimura varieties, one would choose $F$ to be a finite extension of the $p$-adic numbers. When studying moduli spaces of shutkas, $F$ would be the field of Laurent series over a finite field.

In any case, we fix a reductive group $G$ over $F$. Via \cite[Section~2]{Goertz2015}, we may reduce questions regarding affine Deligne--Lusztig varieties of $G$ to the case of a quasi-split group. In order to minimize the notational burden, we assume that the group $G$ is quasi-split throughout this paper.

We construct its associated affine root system and affine Weyl group following Haines--Rapoport \cite{Haines2008} and Tits \cite{Tits1979}.

Fix a maximal $\breve F$-split torus $T_{\breve F}\subseteq G_{\breve F}$ and  write $T$ for its centralizer in $G_{\breve F}$, so $T$ is a maximal torus of $G_{\breve F}$. Write $\mathcal A = \mathcal A(G_{\breve F},T_{\breve F})$ for the apartment of the Bruhat-Tits building of $G_{\breve F}$ associated with $T_{\breve F}$. We pick a $\sigma$-invariant alcove $\mathfrak a$ in $\mathcal A$. Its stabilizer is a $\sigma$-invariant Iwahori subgroup $I\subset G(\breve F)$.

Denote the normalizer of $T$ in $G$ by $N_G(T)$. Then the quotient \begin{align*}\widetilde W = N_G(T)(\breve F) / (T(\breve F)\cap I)\end{align*}
is called \emph{extended affine Weyl group}, and $W = N_G(T)(\breve F)/T(\breve F)$ is the \emph{(finite) Weyl group}. The Weyl group $W$ is naturally a quotient of $\widetilde W$. We denote the Frobenius action on $W$ and $\widetilde W$ by $\sigma$ as well.

The affine roots as constructed in \cite[Section~1.6]{Tits1979} are denoted $\Phi_\af$. Each of these roots $a\in \Phi_\af$ defines an affine function $a:\mathcal A\rightarrow\mathbb R$. The vector part of this function is denoted $\cl(a) \in V^\ast$, where $V = X_\ast(S)\otimes\mathbb R = X_\ast(T)_{\Gamma_0}\otimes \mathbb R$. Here, $\Gamma_0 = \Gal(\overline F/\breve F)$ is the absolute Galois group of $\breve F$, i.e.\ the inertia group of $\Gamma = \Gal(\overline F/F)$. The set of \emph{(finite) roots} is\footnote{This is different from the root system that \cite{Tits1979} and \cite{Haines2008} denote by $\Phi$; it coincides with the root system called $\Sigma$ in \cite{Haines2008}.} $\Phi := \cl(\Phi_\af)$.

Each affine root in $\Phi_\af$ divides the standard apartment into two half-spaces, one being the positive and one the negative side. Those affine roots are where our fixed alcove $\mathfrak a$ is on the positive side are called \emph{positive affine roots}. If moreover the alcove $\mathfrak a$ is adjacent to the root hyperplane, it is called \emph{simple affine root}. We denote the sets of simple resp.\ positive affine roots by $\Delta_\af\subseteq \Phi_\af^+\subseteq \Phi_\af$.

Writing $W_\af$ for the extended affine Weyl group of $G$, we get a natural $\sigma$-equivariant short exact sequence (cf.\ \cite[Lemma~14]{Haines2008})
\begin{align*}
1\rightarrow W_\af\rightarrow\widetilde W\rightarrow \pi_1(G)_{\Gamma_0}\rightarrow 1.
\end{align*}
Here, $\pi_1(G) := X_\ast(T)/\mathbb Z\Phi^\vee$ denotes the Borovoi fundamental group.

For each $x\in \widetilde W$, we denote by $\ell(x)\in \mathbb Z_{\geq 0}$ the length of a shortest alcove path from $\mathfrak a$ to $x\mathfrak a$. The elements of length zero are denoted $\Omega$. The above short exact sequence yields an isomorphism of $\Omega$ with $\pi_1(G)_{\Gamma_0}$, realizing $\widetilde W$ as semidirect product $\widetilde W = \Omega\ltimes W_\af$.

Each affine root $a\in \Phi_\af$ defines an affine reflection $r_a$ on $\mathcal A$. The group generated by these reflections is naturally isomorphic to $W_\af$ (cf.\ \cite{Haines2008}), so by abuse of notation, we also write $r_a\in W_\af$ for the corresponding element. We define $S_\af := \{r_a\mid a\in \Delta_\af\}$, called the set of \emph{simple affine reflections}. The pair $(W_\af, S_\af)$ is a Coxeter group with length function $\ell$ as defined above.

We pick a special vertex $\mathfrak x\in \mathcal A$ that is adjacent to $\mathfrak a$. Since we assumed $G$ to be quasi-split, we may and do choose $\mathfrak x$ to be $\sigma$-invariant. We identify $\mathcal A$ with $V$ via $\mathfrak x\mapsto 0$. This allows us to decompose $\Phi_\af = \Phi\times\mathbb Z$, where $a = (\alpha,k)$ corresponds to the function
\begin{align*}
V\rightarrow \mathbb R, v\mapsto \alpha(v)+k.
\end{align*}
From \cite[Proposition~13]{Haines2008}, we moreover get decompositions $\widetilde W = W\ltimes X_\ast(T)_{\Gamma_0}$ and $W_\af = W\ltimes \mathbb Z\Phi^\vee$. Using this decomposition, we write elements $x\in \widetilde W$ as $x = w\varepsilon^\mu$ with $w\in W$ and $\mu\in X_\ast(T)_{\Gamma_0}$. For $a = (\alpha,k)\in \Phi_\af$, we have $r_a = s_\alpha \varepsilon^{k\alpha^\vee}\in W_\af$, where $s_\alpha\in W$ is the reflection associated with $\alpha$. The natural action of $\widetilde W$ on $\Phi_\af$ can be expressed as
\begin{align*}
(w\varepsilon^\mu)(\alpha,k) = (w\alpha,k-\langle\mu,\alpha\rangle).
\end{align*}
We define the \emph{dominant chamber} $C\subseteq V$ to be the Weyl chamber containing our fixed alcove $\mathfrak a$. This gives a Borel subgroup $B\subseteq G$, and corresponding sets of positive/negative/simple roots $\Phi^+, \Phi^-, \Delta\subseteq \Phi$.

By abuse of notation, we denote by $\Phi^+$ also the indicator function of the set of positive roots, i.e.
\begin{align*}
\forall\alpha\in \Phi:~\Phi^+(\alpha)=\begin{cases}1,&\alpha\in \Phi^+,\\
0,&\alpha\in \Phi^-.\end{cases}
\end{align*}
The sets of positive and negative affine roots can be expressed as
\begin{align*}
\Phi_\af^+=&(\Phi^+\times \mathbb Z_{\geq 0})\sqcup (\Phi^-\times \mathbb Z_{\geq 1}) = \{(\alpha,k)\in \Phi_\af\mid k\geq \Phi^+(-\alpha)\},
\\\Phi_\af^- =&-\Phi_\af^+ = \Phi_\af\setminus \Phi_\af^+= \{(\alpha,k)\in \Phi_\af\mid k< \Phi^+(-\alpha)\}.
\end{align*}
One checks that $\Phi_\af^+$ are precisely the affine roots that are sums of simple affine roots.

Decompose $\Phi$ as a direct sum of irreducible root systems, $\Phi = \Phi_1\sqcup\cdots\sqcup \Phi_c$. Each irreducible factor contains a uniquely determined highest root $\theta_i\in \Phi_i^+$. Now the set of simple affine roots is
\begin{align*}
\Delta_\af = \{(\alpha,0)\mid \alpha\in \Delta\}\cup\{(-\theta_i,1)\mid i=1,\dotsc,c\}\subset \Phi_\af^+.
\end{align*}

We call an element $\mu\in X_\ast(T)_{\Gamma_0}\otimes \mathbb Q$ \emph{dominant} if $\langle \mu,\alpha\rangle\geq 0$ for all $\alpha\in \Phi^+$. Similarly, we call it $C$-regular for a real number $C$ if
\begin{align*}
\abs{\langle\mu,\alpha\rangle}\geq C
\end{align*}
for each $\alpha\in \Phi^+$. If $\mu \in X_\ast(T)_{\Gamma_0}$ is dominant, then the Newton point of $\varepsilon^\mu\in \widetilde W$ is given by the $\sigma$-average of $\mu$, defined as
\begin{align*}
\avg_\sigma(\mu) = \frac 1N\sum_{i=1}^N \sigma^i(\mu),
\end{align*}
where $N>0$ is any integer such that the action of $\sigma^N$ on $X_\ast(T)_{\Gamma_0}$ is trivial.

An element $x = w\varepsilon^\mu\in \widetilde W$ is called $C$-regular if $\mu$ is. We write $\LP(x)\subseteq W$ for the set of length positive elements as introduced in \cite[Section~2.2]{Schremmer2022_newton}. If $x$ is $2$-regular, then $\LP(x)$ consists only of one element, namely the uniquely determined $v\in W$ such that $v^{-1}\mu$ is dominant.

For elements $\mu,\mu'$ in $X_\ast(T)_{\Gamma_0}\otimes \mathbb Q$ (resp.\ $X_\ast(T)_{\Gamma_0}$ or $X_\ast(T)_{\Gamma}$), we write $\mu\leq \mu'$ if the difference $\mu'-\mu$ is a $\mathbb Q_{\geq 0}$-linear combination of positive coroots. 
\section{Double Bruhat graph}\label{sec:DBG}
We recall the definition of the double Bruhat graph following Naito--Watanabe \cite[Section~5.1]{Naito2017}. It turns out that the paths we studied in order to understand affine Deligne--Lusztig varieties are a certain subset of the paths studied by Naito--Watanabe in order to study Kazhdan-Lusztig theory, or more precisely periodic $R$-polynomials.
\begin{definition}
Let $\prec$ be a total order on $\Phi^+$, and let moreover $v,w\in W$.
\begin{enumerate}[(a)]
\item The \emph{double Bruhat graph} $\DBG(W)$ is a finite directed graph. Its set of vertices is $W$. For each $w\in W$ and $\alpha\in \Phi^+$, there is an edge $w\xrightarrow{\alpha} ws_\alpha$.
\item A \emph{non-labelled path} $\overline p$ in $\DBG(W)$ is a sequence of adjacent edges
\begin{align*}
\overline p : v = u_1\xrightarrow{\alpha_1} u_2\xrightarrow{\alpha_2} \cdots\xrightarrow{\alpha_\ell} u_{\ell+1} = w.
\end{align*}
We call $\overline p$ a non-labelled path from $v$ to $w$ of length $\ell(\overline p) = \ell$. We say $\overline p$ is \emph{increasing} with respect to $\prec$ if $\alpha_1\prec\cdots\prec\alpha_\ell$. In this case, we moreover say that $\overline p$ \emph{is bounded by }$n\in\mathbb Z$ if $\alpha_\ell = \beta_i$ for some $i\leq n$.
\item A \emph{labelled path} or \emph{path} $p$ in $\DBG(W)$ consists of an unlabelled path
\begin{align*}
\overline p: v = u_1\xrightarrow{\alpha_1}u_2\xrightarrow{\alpha_2}\cdots\xrightarrow{\alpha_\ell} u_{\ell+1}=w
\end{align*}
together with integers $m_1,\dotsc,m_\ell\in\mathbb Z$ subject to the condition
\begin{align*}
m_i\geq \Phi^+(-u_i\alpha_i)=\begin{cases}0,&\ell(u_{i+1})>\ell(u_i),\\
1,&\ell(u_{i+1})<\ell(u_i).\end{cases}
\end{align*}
We write $p$ as
\begin{align*}
p : v = u_1\xrightarrow{(\alpha_1,m_1)}u_2\xrightarrow{(\alpha_2,m_2)}\cdots\xrightarrow{(\alpha_\ell,m_\ell)} u_{\ell+1}=w.
\end{align*}
The \emph{weight} of $p$ is
\begin{align*}
\wt(p) = m_1\alpha_1^\vee+\cdots+m_\ell\alpha_\ell^\vee\in\mathbb Z\Phi^\vee.
\end{align*}
The \emph{length} of $p$ is $\ell(p) = \ell(\overline p) = \ell$. We say that $p$ is \emph{increasing} with respect to $\prec$ if $\overline p$ is. In this case, we say that $p$ is bounded by $n\in\mathbb Z$ if $\overline p$ is.
\item The set of all paths from $v$ to $w$ that are increasing with respect to $\prec$ and bounded by $n\in\mathbb Z$ is denoted $\paths^\prec_{\preceq n}(v\Rightarrow w)$. We also write \begin{align*}\paths^\prec(v\Rightarrow w) = \paths^\prec_{\preceq\#\Phi^+}(v\Rightarrow w).\end{align*}
\item The order $\prec$ is called a \emph{reflection order} if for all roots $\alpha,\beta\in \Phi^+$ with $\alpha+\beta\in \Phi^+$, we have
\begin{align*}
\alpha\prec \alpha+\beta\prec\beta\text{ or }\beta\prec\alpha+\beta\prec \alpha.
\end{align*}
\end{enumerate}
\end{definition}
We will frequently use the immediate properties of these paths as developed in \crossRef{Section~}{sec:DBG}. For this section, our main result describe how these paths behave with respect to certain simple affine reflections. Fix a reflection order
\begin{align*}
\Phi^+ = \{\beta_1\prec\cdots\prec\beta_{\#\Phi^+}\}
\end{align*}
and write
\begin{align*}
\pi_{\succ n} = s_{\beta_{n+1}}\cdots s_{\beta_{\#\Phi^+}}\in W
\end{align*}
as in \crossRef{Definition~}{def:weightMultiset}.
\begin{theorem}\label{thm:dbgRecursions}
Let $u,v\in W$ and $n\in\{0,\dotsc,\#\Phi^+\}$. Pick a simple affine root $a = (\alpha,k)\in \Delta_\af$ such that $(v \pi_{\succ n})^{-1}\alpha\in \Phi^-$.
\begin{enumerate}[(a)]
\item If $u^{-1}\alpha\in \Phi^-$, then there exists an explicitly described bijection of paths
\begin{align*}
\psi: \paths^\prec_{\preceq n}(s_\alpha u\Rightarrow s_\alpha v)\rightarrow \paths^\prec_{\preceq n}(u\Rightarrow v)
\end{align*}
satisfying for each $p\in \paths^\prec_{\preceq n}(s_\alpha u\Rightarrow s_\alpha v)$ the conditions
\begin{align*}
\ell(\psi(p)) = \ell(p),\qquad \wt(\psi(p)) = \wt(p) + k(v^{-1}\alpha^\vee - u^{-1}\alpha^\vee).
\end{align*}
\item If $u^{-1}\alpha\in \Phi^+$, then there exists an explicitly described bijection of paths
\begin{align*}
\varphi: \paths^\prec_{\preceq n}(s_\alpha u\Rightarrow s_\alpha v)\sqcup  \paths^\prec_{\preceq n}(s_\alpha u\Rightarrow v)\rightarrow \paths^\prec_{\preceq n}(u\Rightarrow v)
\end{align*}
satisfying for each $p\in \paths^\prec_{\preceq n}(s_\alpha u\Rightarrow s_\alpha v)$ and $p'\in \paths^\prec_{\preceq n}(s_\alpha u\Rightarrow v)$ the conditions
\begin{align*}
\ell(\varphi(p)) =& \ell(p),\qquad \wt(\varphi(p)) = \wt(p) + k(v^{-1}\alpha^\vee - u^{-1}\alpha^\vee),\\
\ell(\varphi(p')) =& \ell(p')+1,\qquad \wt(\varphi(p')) = \wt(p') - ku^{-1}\alpha^\vee.
\end{align*}
\end{enumerate}
\end{theorem}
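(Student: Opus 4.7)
The plan is to construct the bijections $\psi$ and $\varphi$ by explicit combinatorial operations, following the approach of Naito--Watanabe. The guiding principle is that left-multiplication by $s_\alpha$ acts on vertices of the double Bruhat graph, and one wants to extend this action to entire paths in a way compatible with the integer labels, the length, and the weight data.

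For part (a), given a path
\[
p : s_\alpha u = w_1 \xrightarrow{(\alpha_1,m_1)} w_2 \xrightarrow{(\alpha_2,m_2)} \cdots \xrightarrow{(\alpha_\ell,m_\ell)} w_{\ell+1} = s_\alpha v,
\]
I would define $\psi(p)$ by left-multiplying every vertex by $s_\alpha$ while keeping the root labels $\alpha_i$ unchanged: the edge $w_i \xrightarrow{\alpha_i} w_i s_{\alpha_i}$ of the underlying graph translates to the valid edge $s_\alpha w_i \xrightarrow{\alpha_i} s_\alpha w_i s_{\alpha_i}$. The length and the increasing and bounded-by-$n$ conditions on the root labels are then preserved immediately. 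For the integer label, take $m_i' = m_i + k\langle\alpha^\vee, w_i\alpha_i\rangle$; summing over the path, the telescoping identity
\[
\sum_{i=1}^\ell \langle \alpha^\vee, w_i \alpha_i \rangle \alpha_i^\vee = w_1^{-1}\alpha^\vee - w_{\ell+1}^{-1}\alpha^\vee = v^{-1}\alpha^\vee - u^{-1}\alpha^\vee
\]
then produces the stated weight change.

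For part (b), I would partition $\paths^\prec_{\preceq n}(u \Rightarrow v)$ according to whether the first edge of the path has root label $u^{-1}\alpha \in \Phi^+$, in which case it takes $u$ to $s_\alpha u$, or a different positive root. In the first case, stripping the initial edge yields a path in $\paths^\prec_{\preceq n}(s_\alpha u \Rightarrow v)$ of length one less, with an explicit weight correction coming from the removed edge label; this accounts for the second summand in the stated bijection. Paths in the second case should lie in the image of $\paths^\prec_{\preceq n}(s_\alpha u \Rightarrow s_\alpha v)$ under (the inverse of) the construction of part (a).

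The hard part will be to verify that the integer-label adjustment in (a) preserves the positivity requirement $m_i' \geq \Phi^+(-s_\alpha w_i \alpha_i)$, and that the partition in (b) is well-defined and exhaustive on bounded, increasing paths. Naively, the identity $\Phi^+(-s_\alpha\gamma) - \Phi^+(-\gamma) = k\langle\alpha^\vee, \gamma\rangle$ fails at edges where $w_i\alpha_i \in \{\pm\alpha\}$, so either the formula must be modified at such \emph{critical} edges or their occurrence must be ruled out. The hypothesis $(v\pi_{\succ n})^{-1}\alpha \in \Phi^-$ together with the sign condition on $u^{-1}\alpha$ is precisely what controls this: since $\pi_{\succ n}$ is the product of reflections corresponding to the tail $\beta_{n+1},\dots,\beta_{\#\Phi^+}$ of the reflection order, the condition on $v^{-1}\alpha$ determines whether $\alpha$ can appear as an edge label within the bound $n$. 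Tracking these compatibilities carefully, and performing the remaining case analysis for critical edges, is where I expect the bulk of the technical work to lie.
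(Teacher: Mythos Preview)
Your construction for part~(a) fails at the critical edges you yourself identify, and the fix is not a local case analysis. If $(\alpha,k)=(\alpha,0)$ is a finite simple root and some interior edge has $w_i\alpha_i = \alpha$, then your adjustment gives $m_i' = m_i$ while the positivity constraint jumps from $m_i \geq 0$ to $m_i' \geq 1$; the hypotheses on $u^{-1}\alpha$ and $(v\pi_{\succ n})^{-1}\alpha$ do not exclude such edges. The actual Naito--Watanabe map is not a vertex-wise translation but a path surgery: at each critical index $d_q$ the offending edge collapses (since $r_a x_{d_q} = x_{d_q+1}$), and a compensating edge labelled $w_{r_q}^{-1}\alpha$ is inserted at a later index $r_q$ where it fits the reflection-order gap $\gamma_{r_q-1} \prec w_{r_q}^{-1}\alpha \prec \gamma_{r_q}$. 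Your part~(b) partition is also wrong: stripping a first edge $u \xrightarrow{(u^{-1}\alpha,\,m_1)} s_\alpha u$ lands only on those paths in $\paths^\prec_{\preceq n}(s_\alpha u \Rightarrow v)$ whose first root is $\succ u^{-1}\alpha$, so the map is not onto; and for $k=1$ the stated weight formula would force the prepended label to be $m_1 = -1$, which violates positivity.

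The paper does not build $\psi, \varphi$ from scratch. It cites the Naito--Watanabe bijections on semi-infinite paths $P_r^\prec(y,w)$ in $\widetilde W$, which already incorporate the surgery above, and then reduces the bounded case $n < \#\Phi^+$ to the unbounded one by a \emph{padding} trick: append dummy edges $(\beta_{n+1}, m_{n+1}), \dots, (\beta_{\#\Phi^+}, m_{\#\Phi^+})$ with large labels $m_i \geq 4$, apply the known bijection, and check that the critical-index set $D_{r_a}$ and the insertion indices $r_q$ stay within the unpadded portion (the large labels prevent the padding from being critical, and the absence of order-gaps in the tail $\beta_{n+1} \prec \cdots \prec \beta_{\#\Phi^+}$ prevents insertions there). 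The hypothesis $(v\pi_{\succ n})^{-1}\alpha \in \Phi^-$ is exactly what makes the padded endpoint satisfy the semi-infinite condition $sw <_{\frac\infty 2} w$ needed to invoke Naito--Watanabe.
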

The proof of this theorem can essentially be found in Section~3.3 of \cite{Naito2017}, which is a rather involved and technical construction. One may obtain a weaker version of Theorem~\ref{thm:dbgRecursions} by comparing the action of simple affine reflections on semi-infinite orbits with \crossRef{Theorem~}{thm:semiInfiniteOrbitsViaPaths}. While such a weaker result would be sufficient for our geometric applications, we do need the full strength of Theorem~\ref{thm:dbgRecursions} for our conclusions on the Iwahori--Hecke algebra. Moreover, we would like to explain the connection between our paper and \cite{Naito2017}. Let us hence recall some of the notation used by Naito--Watanabe:
\begin{definition}
\begin{enumerate}[(a)]
\item By $\leq_{\frac\infty 2}$, we denote the semi-infinite order on $\widetilde W$ as introduced by Lusztig \cite{Lusztig1980}. It is generated by inequalities of the form
\begin{align*}
w\varepsilon^\mu<_{\frac\infty 2} r_{(\alpha,k)}w\varepsilon^\mu
\end{align*}
where $(\alpha,k)\in \Phi_\af^+$, $w\in W$ and $\mu\in X_\ast(T)_{\Gamma_0}$ satisfy $w^{-1}\alpha\in \Phi^+$.
\item For $w, y\in \widetilde W$, we denote by $P_r^\prec(y, w)$ the set of paths in $\widetilde W$ of the form
\begin{align*}
\Pi:~y = y_1\xrightarrow{(\beta_1, m_1)}y_2\xrightarrow{(\beta_2, m_2)}\cdots\xrightarrow{(\beta_{\ell}, m_{\ell})} y_{\ell+1} = w
\end{align*}
such that the following two conditions are both satisfied:
\begin{itemize}
\item For each $i=1,\dotsc,\ell$, we have $y_{i+1}>_{\frac\infty 2} y_i$. Writing $y_i = w_i\varepsilon^{\mu_i}$, we have
\begin{align*}
y_{i+1} = w_i s_{\beta_i} \varepsilon^{\mu_i + m_i \beta_i^\vee}.
\end{align*}
\item The roots $\beta_i$ are all positive and satisfy $\beta_1\prec\cdots\prec \beta_\ell$.
\end{itemize}
We denote the number of edges in $\Pi$ by $\ell(\Pi):=\ell$.
\end{enumerate}
\end{definition}
These paths $P^\prec_r(\cdot,\cdot)$ occur with exactly the same name in the article of Naito--Watanabe, and are called translation-free paths. They also consider a larger set of paths, where so-called translation edges are allowed, which is however less relevant for our applications.

From the definition of the semi-infinite order, we easily obtain the following relation between the paths in $\widetilde W$ and the paths in the double Bruhat graph. This can be seen as a variant of \cite[Proposition~5.2.1]{Naito2017}.
\begin{lemma}\label{lem:pathTranslation}
Let $y = w_1\varepsilon^{\mu_1}, w  = w_2\varepsilon^{\mu_2}\in \widetilde W$. Then the map
\begin{align*}
\Psi:&P_r^\prec(y, w)\rightarrow \{p\in \paths^\prec(w_1\Rightarrow w_2)\mid \wt(p) = \mu_2-\mu_1\},
\\&\left(\Pi: y = y_0\xrightarrow{(\beta_1, m_1)}y_1\xrightarrow{(\beta_2, m_2)}\cdots\xrightarrow{(\beta_{\ell}, m_{\ell})} y_{\ell+1} = w\right)\\&\mapsto \left(\Phi(\Pi):~w_1 = \cl(y_0)\xrightarrow{(\beta_1, m_1)}\cl(y_1)\xrightarrow{(\beta_2, m_2)}\cdots\xrightarrow{(\beta_{\ell}, m_{\ell})} \cl(y_{\ell+1})\right)
\end{align*}
is bijective and length-preserving (i.e.\ $\ell(\Psi(\Pi)) = \ell(\Pi)$).\rightqed
\end{lemma}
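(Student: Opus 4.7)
The plan is to verify $\Psi$ is well defined and exhibit an explicit inverse; length preservation is then immediate by construction.

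For well-definedness, write $y_i = w_i'\varepsilon^{\mu_i'}$ so that $y_0 = y$ and $y_\ell = w$. The relation $y_i = w_{i-1}' s_{\beta_i}\varepsilon^{\mu_{i-1}' + m_i\beta_i^\vee}$ coming from the definition of $P_r^\prec$ gives at once $\cl(y_i) = \cl(y_{i-1})s_{\beta_i}$, so each step of $\Psi(\Delta)$ is an edge of $\DBG(W)$ labelled by $\beta_i$. The reflection-order constraint $\beta_1\prec\cdots\prec\beta_\ell$ transfers verbatim, and telescoping $\mu_i' = \mu_{i-1}' + m_i\beta_i^\vee$ yields
\[
\wt(\Psi(\Delta)) = \sum_{i=1}^\ell m_i\beta_i^\vee = \mu_\ell' - \mu_0' = \mu_2 - \mu_1.
\]

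The crux is to translate the semi-infinite inequality into the DBG label condition $m_i\geq \Phi^+(-\cl(y_{i-1})\beta_i)$. Unfolding the definition of $\leq_{\frac\infty 2}$, the relation $y_{i-1} <_{\frac\infty 2} y_i$ presents $y_i = r_{(\alpha,k)}y_{i-1}$ for some $(\alpha,k)\in\Phi_\af^+$ with $\cl(y_{i-1})^{-1}\alpha\in\Phi^+$. Expanding
\[
r_{(\alpha,k)}\cdot w\varepsilon^\mu = w\,s_{w^{-1}\alpha}\,\varepsilon^{\mu + k(w^{-1}\alpha)^\vee}
\]
and matching with the explicit form of $y_i$ identifies $\beta_i = \cl(y_{i-1})^{-1}\alpha$ (automatically positive, as required) and $m_i = k$. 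Hence the affine-positivity condition $(\alpha,k)\in\Phi_\af^+$, i.e.\ $k\geq\Phi^+(-\alpha)$, is exactly the DBG label bound $m_i\geq \Phi^+(-\cl(y_{i-1})\beta_i)$.

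For bijectivity, I construct the inverse directly. Given $p\in\paths^\prec(w_1\Rightarrow w_2)$ with $\wt(p) = \mu_2 - \mu_1$, set $y_0 := y$ and inductively define $y_i := \cl(y_{i-1})s_{\beta_i}\varepsilon^{\mu_{i-1}' + m_i\beta_i^\vee}$. Reading the computation above backwards, each step realizes a valid semi-infinite cover with the prescribed label, so the resulting sequence lies in $P_r^\prec(y, y_\ell)$; the weight hypothesis forces the terminal vertex to equal $w$. The two assignments are mutually inverse by construction, and $\ell(\Psi(\Delta)) = \ell(\Delta)$ holds by definition.

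The only delicate point — bookkeeping rather than a genuine conceptual obstacle — is keeping the identification $\beta_i = \cl(y_{i-1})^{-1}\alpha$, $m_i = k$ consistent throughout, so that the semi-infinite positivity condition on $(\alpha,k)$ and the DBG label condition on $m_i$ are literally the same inequality.
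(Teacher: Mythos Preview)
Your proof is correct and is precisely the direct verification from the definitions that the paper leaves implicit: note that the paper gives no proof at all (the lemma ends with \texttt{\textbackslash rightqed}, i.e.\ a bare $\square$), treating the statement as an immediate unwinding of the definitions of $P_r^\prec$, the semi-infinite order, and labelled paths in $\DBG(W)$. Your argument supplies exactly this unwinding, and the key identification $\beta_i = \cl(y_{i-1})^{-1}\alpha$, $m_i = k$ together with the equivalence $(\alpha,k)\in\Phi_\af^+ \Longleftrightarrow m_i\geq\Phi^+(-\cl(y_{i-1})\beta_i)$ is correct.

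One small point you use without comment: that $y_{i-1} <_{\frac\infty 2} y_i$ together with $y_i = r_a y_{i-1}$ forces the \emph{generating} inequality for $a$ (or $-a$) to hold directly, rather than merely comparability in the transitive closure. This is a standard property of the semi-infinite order (it admits a compatible length function, so reflection-related comparable pairs are covers), and the paper clearly takes it for granted as well.
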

The main results of \cite[Section~3.3]{Naito2017} can be summarized as follows.
\begin{theorem}\label{thm:nwBijections}
Let $y, w\in \widetilde W$ and pick a simple affine reflection $s\in S_\af$ such that $y<_{\frac\infty 2}sy$ and $sw<_{\frac \infty 2}w$.
\begin{enumerate}[(a)]
\item \cite[Proposition~3.3.2]{Naito2017}: There is an explicitly described bijection
\begin{align*}
\psi: P^\prec_r(y, sw)\rightarrow P^\prec_r(sy, w).
\end{align*}
The map $\psi$ preserves the lengths of paths. Its inverse map $\psi' =\psi^{-1}$ is also explicitly described.
\item \cite[Proposition~3.3.1]{Naito2017}: There is an explicitly described bijection
\begin{align*}
\varphi: P^\prec_r(sy, sw)\sqcup P^\prec_r(sy, w)\rightarrow P^\prec_r(y, w).
\end{align*}
For $\Pi \in P^\prec_r(sy, sw)$, we have $\ell(\varphi(\Pi)) = \ell(\Pi)$. For $\Pi\in P^\prec_r(sy, w)$, we have $\ell(\varphi(\Pi)) = \ell(\Pi)+1$. Its inverse map $\varphi' = \varphi^{-1}$ is also explicitly described.\rightqed
\end{enumerate}
\end{theorem}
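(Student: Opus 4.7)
My plan is to follow the explicit constructions in [Naito2017, Section~3.3], whose central idea is to act on a given path by left-multiplying each of its vertices by the fixed simple affine reflection $s$. The hypotheses $y <_{\frac{\infty}{2}} sy$ and $sw <_{\frac{\infty}{2}} w$ are precisely what make the endpoints line up correctly after this conjugation and what will control the positivity of the resulting edge labels.

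The first computation to run is the following. For a path $\Delta: y_0 \xrightarrow{(\beta_1, m_1)} \cdots \xrightarrow{(\beta_\ell, m_\ell)} y_\ell$ and $s = r_{(\alpha, k)}$, the semidirect-product formula in $\widetilde W = W\ltimes X_\ast(T)_{\Gamma_0}$ lets me check directly that each consecutive pair $(sy_{i-1}, sy_i)$ is again an edge of the required form, with the \emph{same} finite-root label $\beta_i$ but with a shifted integer label $m_i'$ that is an explicit affine function of $m_i$ in terms of $k$, $w_{i-1}$, $\alpha$, and $\beta_i$. Crucially, the sequence of finite roots $\beta_1, \ldots, \beta_\ell$ is unchanged, so the $\prec$-monotonicity is preserved automatically; the whole question therefore reduces to checking the positivity constraints $m_i' \geq \Phi^+(-(s_\alpha w_{i-1})\beta_i)$ on the shifted labels.

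For part~(a), I would use the two semi-infinite order hypotheses to verify these positivity constraints inductively as one walks through the path. The resulting map $\psi$ preserves length by construction; its inverse is obtained by running the same procedure in reverse, justified because $s^2 = 1$ and because the hypotheses on $(y, sw)$ translate directly into the analogous ones on $(sy, w)$. For part~(b), the same conjugation idea applies, but one must additionally account for the presence or absence of an edge corresponding to the simple affine root $a$ itself: partitioning $P_r^\prec(y, w)$ according to whether this exceptional edge occurs yields the disjoint union on the domain of $\varphi$ and explains the length jump $\ell(\varphi(\Delta)) = \ell(\Delta) + 1$ for paths coming from the second summand.

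The main obstacle, and the reason the argument in [Naito2017] is rather involved, lies in the careful bookkeeping of the positivity constraints and of the exceptional edge across several subcases, most notably when $\alpha = -\theta_i$ so that the corresponding affine simple root is $(-\theta_i, 1)$ and $s_\alpha = s_{\theta_i}$ interacts nontrivially with the reflection order on $\Phi^+$. Verifying that the partition in part~(b) is well-defined and that all subcases match up to give actual bijections (rather than just set maps) is where the bulk of the technical work lies; once this is established, the bijective nature of both $\psi$ and $\varphi$ is essentially forced by the involutivity $s^2 = 1$.
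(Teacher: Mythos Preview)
The paper does not prove this theorem; it is stated as a summary of \cite[Propositions~3.3.1 and 3.3.2]{Naito2017} and marked with a QED symbol. However, the construction of $\psi$ is recalled in detail inside the proof of Lemma~\ref{lem:dbgRecursions}, so one can compare your sketch against the actual Naito--Watanabe map.

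Your central claim, that the bijection is obtained by left-multiplying \emph{every} vertex of the path by $s$ so that the sequence of finite-root labels $\beta_1,\dotsc,\beta_\ell$ is unchanged and monotonicity is automatic, is not how the construction works and does not produce a well-defined map. After uniformly applying $s=r_a$, the semi-infinite order condition $sy_{i-1}<_{\frac\infty 2}sy_i$ (equivalently, the positivity constraint on the shifted label $m_i'$) can genuinely fail on certain edges; this is not a bookkeeping issue that can be absorbed into a case check at the end. In the actual construction one first singles out the set $D_{r_a}(\Delta)=\{d_1<\cdots<d_m\}$ of indices where the edge label equals the simple affine root $a$ itself. At each such index the edge is \emph{deleted} (using $r_a x_{d_q}=x_{d_q+1}$), and to compensate a new edge with label $(w_{r_q}^{-1}\alpha,k)$ is \emph{inserted} at a carefully chosen later position $r_q$, whose existence is the content of \cite[Lemma~2.3.2]{Naito2017}. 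Between these special positions the path alternates between segments that are left-multiplied by $r_a$ and segments that are left untouched. In particular the sequence of finite-root labels is \emph{not} preserved, and the $\prec$-increasing property is not automatic: it holds precisely because the insertion points $r_q$ are chosen so that $\gamma_{r_q-1}\prec w_{r_q}^{-1}\alpha\prec\gamma_{r_q}$.

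Your description of part~(b) as a partition according to ``whether the exceptional edge occurs'' is closer in spirit, but the same mechanism of deletion and reinsertion governs part~(a) as well; the difference between~(a) and~(b) lies in the parity of how many such exceptional edges survive, not in whether they appear at all.
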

In view of Lemma~\ref{lem:pathTranslation}, we immediately get the special case of Theorem~\ref{thm:dbgRecursions} for the sets $\paths^\prec(u\Rightarrow v)$, i.e.\ if $n = \#\Phi^+$.
By inspecting the proof and the explicit constructions involved in the proof of Theorem~\ref{thm:nwBijections}, we will obtain the full statement of Theorem~\ref{thm:dbgRecursions}. In order to facilitate this task, we introduce a technique that we call \enquote{path padding}.
\begin{definition}
Let $u,v\in W$ and $0\leq n\leq\#\Phi^+$. Fix positive integers $m_i$ for $i=1,\dotsc,\#\Phi^+$. Then we define the \emph{padding map}
\begin{align*}
\mathrm{pad}_{(m_i)} : \paths^\prec_{\preceq n}(u\Rightarrow v)\rightarrow \paths^\prec(u \Rightarrow v\pi_{\succ n}),
\end{align*}
sending a path $p\in \paths^\prec_{\preceq n}(u\Rightarrow v)$ to the composite path
\begin{align*}
\mathrm{pad}_{(m_i)}(p): u\xRightarrow{p}&
v\xrightarrow{(\beta_{n+1}, m_{n+1})} vs_{\beta_{n+1}}\xrightarrow{(\beta_{n+2}, m_{n+2})}\cdots\\&\cdots \xrightarrow{(\beta_{\#\Phi^+}, m_{\#\Phi^+})}vs_{\beta_{n+1}}\cdots s_{\beta_{\#\Phi^+}} = v\pi_{\succ n}.
\end{align*}
\end{definition}

\begin{lemma}\label{lem:dbgRecursions}
Let $u,v\in W$ and $0\leq n\leq \#\Phi^+$. Pick a simple affine root $a = (\alpha,k)\in \Delta_\af$ such that $(v \pi_{\succ n})^{-1}\alpha\in \Phi^-$.
\begin{enumerate}[(a)]
\item Suppose that $u^{-1}\alpha\in \Phi^-$. For each collection of integers $(m_i\geq 4)_{1\leq i\leq \#\Phi^+}$, there is a unique map \begin{align*}
\tilde \psi :  \paths^\prec_{\preceq n}(s_\alpha u\Rightarrow s_\alpha v)\rightarrow \paths^\prec_{\preceq n}(u\Rightarrow v)
\end{align*} and a collection of integers $(m'_i\geq m_{i}-3)_{1\leq i\leq\#\Phi^+}$ such that the following diagram commutes:
\begin{align*}
\begin{tikzcd}[ampersand replacement=\&,column sep=1em]
\paths^\prec_{\preceq n}(s_\alpha u\Rightarrow s_\alpha v)\ar[d,dotted,"{\tilde\psi}"]\ar[r,"{\mathrm{pad}_{(m_i)}}",hook]\&\paths^\prec(s_\alpha u\Rightarrow s_\alpha v\pi_{\succ n})\ar[r,"{\Psi^{-1}}","{\sim}"']\&\bigsqcup\limits_{\mu\in \mathbb Z\Phi^\vee}P^\prec_r(r_a u, r_a v\pi_{\succ n}\varepsilon^\mu)\ar[d,"{\psi}","\sim"']\\
\paths^\prec_{\preceq n}(u\Rightarrow v)\ar[r,"{\mathrm{pad}_{(m'_i)}}",hook]\&\paths^\prec(u \Rightarrow v\pi_{\succ n})\ar[r,"{\Psi^{-1}}","{\sim}"']\&\bigsqcup\limits_{\mu\in \mathbb Z\Phi^\vee}P^\prec_r(u, v\pi_{\succ n}\varepsilon^\mu).
\end{tikzcd}
\end{align*}
The map $\psi$ on the right comes from Theorem \ref{thm:nwBijections} (a).
The map $\tilde\psi$ has an explicit description independent of the integers $(m_i)$. Moreover, $\tilde\psi$ satisfies the weight and length constraints as required in Theorem \ref{thm:dbgRecursions} (a).

Similarly, there exist integers $(m''_i\geq m_i-3)_{i}$ and a uniquely determined and explicitly described map $\tilde\psi'$ making the following diagram commute:
\begin{align*}
\begin{tikzcd}[ampersand replacement=\&,column sep=1em]
\paths^\prec_{\preceq n}(u\Rightarrow v)
\ar[d,dotted,"{\tilde\psi'}"]\ar[r,"{\mathrm{pad}_{(m_i)}}",hook]
\&\ paths^\prec(u\Rightarrow v\pi_{\succ n})
\ar[r,"{\Psi^{-1}}","{\sim}"']
\& \bigsqcup\limits_{\mu\in \mathbb Z\Phi^\vee}P^\prec_r(u, v\pi_{\succ n}\varepsilon^\mu)
\ar[d,"{\psi'}","\sim"']
\\
\paths^\prec_{\preceq n}(s_\alpha u\Rightarrow s_\alpha v)
\ar[r,"{\mathrm{pad}_{(m''_i)}}",hook]
\&\paths^\prec(s_\alpha u\Rightarrow s_\alpha v\pi_{\succ n})
\ar[r,"{\Psi^{-1}}","{\sim}"']
\&\bigsqcup\limits_{\mu\in \mathbb Z\Phi^\vee}P^\prec_r(r_a u, r_a v\pi_{\succ n}\varepsilon^\mu).
\end{tikzcd}
\end{align*}
\item Suppose that $u^{-1}\alpha\in \Phi^+$. For each collection of integers $(m_i\geq 4)_{1\leq i\leq\#\Phi^+}$, the explicitly described maps
\begin{align*}
\varphi_1 :& \bigsqcup_{\mu\in\mathbb Z\Phi^\vee}P_r^\prec(r_a u,r_a v\pi_{\succ n}\varepsilon^\mu)\rightarrow \bigsqcup_{\mu\in\mathbb Z\Phi^\vee}P_r^\prec(u,v\pi_{\succ n}\varepsilon^\mu),\\
\varphi_2:& \bigsqcup_{\mu\in\mathbb Z\Phi^\vee}P_r^\prec(r_a u, v\pi_{\succ n}\varepsilon^\mu)\rightarrow \bigsqcup_{\mu\in\mathbb Z\Phi^\vee}P_r^\prec(u,v\pi_{\succ n}\varepsilon^\mu),\\
\varphi':&\bigsqcup_{\mu\in\mathbb Z\Phi^\vee} P^\prec_r(u,v\pi_{\succ n}\varepsilon^\mu)\rightarrow \bigsqcup_{\mu\in\mathbb Z\Phi^\vee} P^\prec_r(r_a u,r_a v \pi_{\succ n}\varepsilon^\mu)\sqcup P^\prec_r(r_a u,v \pi_{\succ n}\varepsilon^\mu)
\end{align*}
from Theorem \ref{thm:nwBijections} (b) can be lifted, up to padding and $\Psi^{-1}$ as in (a), to uniquely determined maps
\begin{align*}
\tilde \varphi_1 :& \paths^\prec_{\preceq n}(s_\alpha u\Rightarrow s_\alpha v) \rightarrow \paths^\prec_{\preceq n}(u\Rightarrow v),\\
\tilde \varphi_2 :& \paths^\prec_{\preceq n}(s_\alpha u\Rightarrow v) \rightarrow \paths^\prec_{\preceq n}(u\Rightarrow v),\\
\tilde\varphi':&\paths^\prec_{\preceq n}(u\Rightarrow v)\rightarrow \paths^\prec_{\preceq n}(s_\alpha u\Rightarrow s_\alpha v)\sqcup \paths^\prec_{\preceq n}(s_\alpha u\Rightarrow s_\alpha v).
\end{align*}
All three maps are explicitly described in a way that is independent of the integers $(m_i)$.
The maps $\varphi_1$ and $\varphi_2$ moreover satisfy the desired length and weight compatibility relations from Theorem \ref{thm:dbgRecursions} (b).
\end{enumerate}
\end{lemma}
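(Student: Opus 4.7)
The plan is to track the explicit construction of the Naito--Watanabe bijections from Theorem~\ref{thm:nwBijections} applied to padded paths, and to verify that these bijections preserve the padding structure up to small controlled shifts in the integer labels. I would begin by recalling from \cite[Section~3.3]{Naito2017} the precise combinatorial recipes defining $\psi$, $\psi' = \psi^{-1}$, $\varphi$, and $\varphi' = \varphi^{-1}$. In each case, given a translation-free path $\Delta$ and the simple affine reflection $s = r_a$ chosen by $a = (\alpha, k) \in \Delta_\af$, the bijection is defined by a local modification: it inserts, removes, or reflects a bounded number of edges near a distinguished position in $\Delta$ determined by $a$, and adjusts the integer labels of the neighboring edges additively by integer multiples of $k$ or $1$, with total shift of magnitude at most some small constant, which we will verify is at most $3$.

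The key observation is that for a padded path $\mathrm{pad}_{(m_i)}(p)$ with padding integers $m_i \geq 4$, the ``bulk'' of the padding -- the final $\#\Phi^+ - n$ edges carrying the large labels $m_{n+1}, \ldots, m_{\#\Phi^+}$ -- is not in the region modified by the Naito--Watanabe bijection, except for additive shifts of the integer labels bounded by $3$ in absolute value. Under the hypothesis $(v\pi_{\succ n})^{-1}\alpha \in \Phi^-$, the simple affine reflection $r_a$ is compatible with the padded endpoint in the sense required by Theorem~\ref{thm:nwBijections}, so the bijection may indeed be applied after identifying full paths with translation-free paths via Lemma~\ref{lem:pathTranslation}. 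The image is then again a padded path $\mathrm{pad}_{(m'_i)}(\tilde\psi(p))$ for uniquely determined integers $m'_i \geq m_i - 3 \geq 1$ and a uniquely determined bounded path $\tilde\psi(p) \in \paths^\prec_{\preceq n}(u \Rightarrow v)$. Since $\tilde\psi(p)$ depends only on the bounded prefix of the image and not on the choice of $(m_i)$, this construction yields a well-defined map independent of the padding data, and invertibility is inherited from Theorem~\ref{thm:nwBijections}. The same reasoning applies symmetrically for $\tilde\psi'$, $\tilde\varphi_1$, $\tilde\varphi_2$, and $\tilde\varphi'$.

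The main obstacle is a careful case analysis of the explicit formulas in \cite[Section~3.3]{Naito2017}. For each case distinguished there -- according to the sign of $u_i^{-1}\alpha$ along the path, and depending on whether edges are inserted, deleted, or merely relabelled -- one must confirm that the affected edges all carry roots that precede a fixed threshold in $\prec$, so that when the $m_i$ are sufficiently large the padding edges are untouched except for integer shifts of magnitude at most $3$. Once this localization is established, the length and weight identities stated in Theorem~\ref{thm:dbgRecursions} follow by direct computation: padding contributes a fixed length $\#\Phi^+ - n$ and a fixed weight $\sum_{i > n} m_i \beta_i^\vee$ on the source side, with the analogous contributions on the target side involving $m'_i$, and subtracting these recovers the stated formulas from the length and weight relations of the Naito--Watanabe bijections.
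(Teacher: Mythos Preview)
Your approach is essentially the same as the paper's: pad the bounded path, apply the Naito--Watanabe bijection via Lemma~\ref{lem:pathTranslation}, and verify that the explicit construction from \cite[Section~3.3]{Naito2017} leaves the padding portion intact except for bounded additive shifts of the labels, so that the image is again a padded path and the bounded prefix defines $\tilde\psi(p)$ independently of the $m_i$.

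One clarification worth making precise when you carry out the analysis: the Naito--Watanabe map $\psi$ is not a modification near a single distinguished position. It involves a finite set $D_{r_a}(\Delta) = \{d : (w_d^{-1}\gamma_d, n_d) = (\alpha,k)\}$ of possibly many indices, and for each $d_q$ in this set an insertion is performed at a subsequent index $r_q$. The reason both families of indices remain in the bounded prefix is concrete: since $a=(\alpha,k)$ is simple affine one has $k\leq 1$, so the condition $n_d = k$ cannot hold on the padding where $n_d = m_i \geq 4$, forcing $D_{r_a}(\Delta)\subseteq [1,\ell(p)]$; and the padding uses consecutive roots $\beta_{n+1}\prec\beta_{n+2}\prec\cdots$ in the reflection order, leaving no gap $\gamma_{r_q-1}\prec w_{r_q}^{-1}\alpha\prec\gamma_{r_q}$ for an insertion point. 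With this in hand, the padding labels are modified only by $m_i' = m_i - k\langle\alpha^\vee, vs_{\beta_{n+1}}\cdots s_{\beta_{i-1}}(\beta_i)\rangle$, giving the bound $m_i'\geq m_i-3$. Your localization claim is correct in spirit, but you should track these index sets rather than a single position.
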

\begin{proof}
We only explain how to obtain the map $\tilde\psi$ from the map $\psi$, as the other cases are analogous. So pick any path $p\in \paths^\prec_{\preceq n}(s_\alpha u\Rightarrow s_\alpha v)$. Write it as
\begin{align*}
p:~s_\alpha u = w_1\xrightarrow{(\gamma_1, n_1)}w_2\xrightarrow{(\gamma_2, n_2)}\cdots\xrightarrow{(\gamma_{\ell(p)}, n_{\ell(p)})} w_{\ell(p)+1} = s_\alpha v.
\end{align*}
Then
\begin{align*}
\mathrm{pad}_{(m_i)}(p):& s_\alpha  u = w_1\xrightarrow{(\gamma_1, n_1)}\cdots \xrightarrow{(\gamma_{\ell(p)}, n_{\ell(p)})}s_\alpha w_{\ell(p)+1}=s_\alpha v\\&\xrightarrow{(\beta_{n+1}, m_{n+1})}\cdots\xrightarrow{(\beta_{\#\Phi^+}, m_{\#\Phi^+})}s_\alpha v\pi_{\succ n}.
\end{align*}
Define $\gamma_{\ell(p)+i} = \beta_{n+i}$ and $n_{\ell(p)+i} = m_{\ell(p)+i}$ for $i=1,\dotsc,\#\Phi^+-\ell(p)$. Then we can write
\begin{align*}
\mathrm{pad}_{(m_i)}(p):s_\alpha u=w_1\xrightarrow{(\gamma_1, n_1)}\cdots\xrightarrow{(\gamma_{\ell'}, n_{\ell'})}w_{\ell'+1} = v\pi_{\succ n},
\end{align*}
such that $\ell' = \ell(p) + (\#\Phi^+-n)$.

Writing $\mu := \wt(\pad_{(m_i)}(p)) + k\left((v\pi_{\succ n})^{-1}\alpha^\vee - u^{-1}\alpha^\vee\right)$, we may express the path $\Pi :=  \Psi^{-1}(\pad_{(m_i)}(p)) \in P_r^\prec\left(r_a u, r_a v\pi_{\succ\beta_n}\varepsilon^\mu\right)$ as
\begin{align*}\Pi:&~r_a u = w_1\varepsilon^{-kw_1^{-1}\alpha^\vee}\xrightarrow{(\gamma_1, n_1)}w_2\varepsilon^{n_1\gamma_1^\vee-kw_1^{-1}\alpha^\vee}\xrightarrow{(\gamma_2, k_2)}\\&\cdots\xrightarrow{(\gamma_{\ell'}, n_{\ell'})}w_{\ell'+1}\varepsilon^{\wt(\pad_{(m_i)}(p))-kw_1^{-1}\alpha^\vee} = r_av\pi_{\succ n}\varepsilon^\mu.
\end{align*}
We now apply the map $\psi$ as defined in \cite[Section~3.3]{Naito2017}. For this, we need to determine the set
\begin{align*}
D_{r_a}(\Pi) = \{d\in \{1,\dotsc,\ell'\}\mid (\alpha,k) = (w_d^{-1}\gamma_d, n_d)\}.
\end{align*}
Since $m_i\geq 4$ for all $i$, we get
\begin{align*}
D_{r_a}(\Pi) = \{d \mid d\in \{1,\dotsc,\ell(p)\}\text{ and }(\alpha,k) = (w_d^{-1}\gamma_d, n_d)\}\subseteq [1,\ell(p)].
\end{align*}
In particular, the set $D_{r_a}(\Pi)$ depends only on $p$ and not the integers $(m_i)$. 

Naito--Watanabe construct the path $\psi(\Pi)$ as follows: Write $D_{r_a}(\Pi) = \{d_1<\cdots<d_m\}$, which we allow to be the empty set.

For each index $q\in \{1,\dotsc,m\}$, we define $r_q\in \{d_q+2,\dotsc,d_{q+1}\}$ (where $d_{m+1} = \ell'+1$) to be the smallest index such that
\begin{align*}
w_{r_q}^{-1}\alpha\in \Phi^+\text{ and }\gamma_{r_q-1}\prec w_{r_q}^{-1}\alpha\prec \gamma_{r_q}.
\end{align*}
The existence of such an index $r_q$ is proved in \cite[Lemma~2.3.2]{Naito2017}. For $i=1,\dotsc,\#\Phi^+-n$, note that there is no positive root $\beta$ satisfying $\gamma_i\prec\beta\prec\gamma_{i+1}$ (resp.\ $\gamma_{\ell'}\prec\beta$ if $i=\#\Phi^+-n\geq 1$). Hence $r_1,\dotsc,r_m\leq n$ and they only depend on the path $p$, not the integers $(m_i)$.


We introduce the shorthand notation
\begin{align*}
x_h := w_h\varepsilon^{n_1\gamma_1^\vee+\cdots + n_{h-1}\gamma_{h-1}^\vee - kw_1^{-1}\alpha^\vee},
\end{align*}
such that $\Pi$ is of the form $x_1\rightarrow\cdots\rightarrow x_{\ell'+1}$. Then $\psi(\Pi)$ is defined as the composition of $\Pi_0',\dotsc,\Pi_m'$, given by
\begin{align*}
\Pi'_0:&~u=r_a x_1\xrightarrow{(\gamma_1, n_1')}r_a x_2\xrightarrow{(\gamma_2, n_2')}\cdots\xrightarrow{(\gamma_{d_1-1}, n'_{d_1-1})}r_ax_{d_1},\\
\Pi'_q:&~r_a x_{d_q} = x_{d_q+1}\xrightarrow{(\gamma_{d_q+1}, n_{d_q+1})}\cdots \xrightarrow{(\gamma_{r_q-1}, n_{r_q-1})}x_{r_q}\xrightarrow{(w_{r_q}^{-1}\alpha, k)} r_a x_{r_q}\xrightarrow{(\gamma_{r_q}, n'_{r_q})}\cdots\\&\qquad\xrightarrow{(\gamma_{d_{q+1}-1}, n'_{d_{q+1}-1})} r_a x_{d_{q+1}},
\end{align*}
where we write
\begin{align*}
n'_i := n_i + k\langle \alpha^\vee,w_i\gamma_i\rangle,\qquad i=1,\dotsc,\ell'.
\end{align*}
Since $r_1,\dotsc,r_m\leq n$, we may write $\psi(\Pi) = \Psi^{-1}(\pad_{(m'_i)}(p'))$ with
\begin{align*}
m'_i = m_i - k\langle \alpha^\vee, v s_{\beta_{n+1}}\cdots s_{\beta_{i-1}}(\beta_i)\rangle,\qquad i>n.
\end{align*}
The path $p'$ is the composition of the paths $p'_0,\dotsc,p'_m$ defined as
\begin{align*}
p'_0:&~u = s_\alpha w_1\xrightarrow{(\gamma_1, n_1')}\cdots\xrightarrow{(\gamma_{d_1-1}, n'_{d_1-1})} s_\alpha w_{d_1-1}\\
p'_q:&~s_\alpha w_{d_q} = w_{d_q+1}\xrightarrow{(\gamma_{d_q+1}, n_{d_q+1})}\cdots \xrightarrow{(\gamma_{r_q-1}, n_{r_q-1})}w_{r_q}\xrightarrow{(w_{r_q}^{-1}\alpha, k)} s_\alpha w_{r_q}\xrightarrow{(\gamma_{r_q}, n'_{r_q'})}\cdots\\&\qquad\xrightarrow{(\gamma_{d_{q+1}-1}, n'_{d_{q+1}-1})} s_\alpha w_{d_{q+1}}.
\end{align*}
We see that $p'$ as defined above is explicitly described only in terms of $p$ and independently of the $(m_i)$.

To summarize: We chose integers $(m'_i)$ only depending on $(m_i), u,v,n,\prec,a$ with the following property: For each path $p\in \paths^\prec_{\preceq n}(s_\alpha u\Rightarrow s_\alpha v)$, we may write \begin{align*}\psi(\Psi^{-1}\pad_{(m_i)}(p)) = \Psi^{-1}(\pad_{(m'_i)}(p'))\text{ for some path }p'\in \paths^\prec_{\preceq n}(u\Rightarrow v).\end{align*}
It follows that the function $\tilde\psi$ as claimed exists. It is uniquely determined since $\Psi^{-1}$ and $\pad_{(m'_i)}$ are injective. Moreover, we saw that $p':=\tilde\psi(p)$ can be explicitly described depending only on $p$ and not the integers $(m_i)$.

The function $\tilde\psi$ preserves lengths of paths by construction. Using the explicit description, it is possible to verify that it also satisfies the weight constraint stated in Theorem~\ref{thm:dbgRecursions} (a). The interested reader is invited to verify that the constructions of $\psi', \varphi_1, \varphi_2, \varphi'$ of Naito--Watanabe carry through in similar ways.
\end{proof}
With the main lemma proved, we can conclude Theorem~\ref{thm:dbgRecursions} immediately. Indeed, it remains to show that the functions $\tilde\psi$ and $\tilde\varphi :=(\tilde\varphi_1,\tilde\varphi_2)$ from Lemma~\ref{lem:dbgRecursions} are bijective. Since $\psi$ is bijective with $\psi'$ being its inverse, it follows from the categorical definition and a bit of diagram chasing that $\tilde\psi$ is bijective with $\tilde\psi'$ its inverse. Similarly, one concludes that $\tilde\varphi$ is bijective with $\tilde\varphi'$ its inverse. The main result of this section is proved.

\begin{remark}
\begin{enumerate}[(a)]
\item
Theorem~\ref{thm:dbgRecursions} can be conveniently restated using the language of weight multisets from \crossRef{Definition~}{def:weightMultiset}. For $u,v\in W$ and $0\leq n\leq \#\Phi^+$, we write $\wts(u\Rightarrow v\dashrightarrow v\pi_{\succ n})$ for the multiset
\begin{align*}
\{(\wt(p),\ell(p))\mid p\in\paths^\prec_{\preceq n}(u\Rightarrow v)\}_m.
\end{align*}
We proved that this yields a well-defined multiset $\wts(u\Rightarrow v\dashrightarrow v')$ for all $u,v,v'\in W$.

If $a = (\alpha,k)\in\Delta_\af$ is a simple affine root with $(v')^{-1}\alpha\in \Phi^-$ and $u^{-1}\alpha\in \Phi^-$, then
\begin{align*}
\wts(u\Rightarrow v\dashrightarrow v') = \{(\omega + k(v^{-1}\alpha^\vee-u^{-1}\alpha^\vee),e)\mid (\omega,e)\in \wts(s_\alpha u\Rightarrow s_\alpha v\dashrightarrow s_\alpha v')\}_m.
\end{align*}
If $(v')^{-1}\alpha\in \Phi^-$ and $u^{-1}\alpha\in \Phi^+$, then $\wts(u\Rightarrow v\dashrightarrow v')$ is the additive union of the two multisets
\begin{align*}
&\{(\omega + k(v^{-1}\alpha^\vee-u^{-1}\alpha^\vee),e)\mid (\omega,e)\in \wts(s_\alpha u\Rightarrow s_\alpha v\dashrightarrow s_\alpha v')\}_m
\\\cup&\{(\omega - ku^{-1}\alpha^\vee,e)\mid (\omega,e)\in \wts(s_\alpha u\Rightarrow v\dashrightarrow v')\}_m.
\end{align*}
\item The double Bruhat graph can be seen as a generalization of the quantum Bruhat graph, cf.\ \crossRef{Proposition~}{prop:qbgVsDbg}. It is very helpful to compare results about the double Bruhat graph with the much better developed theory of the quantum Bruhat graph.

Under this point of view, one obtains a version of Theorem~\ref{thm:dbgRecursions} for the quantum Bruhat graph. This is a well-known recursive description of weights in the quantum Bruhat graph, cf.\ \cite[Lemma~7.7]{Lenart2015}.
\item
The remainder of this paper will mostly study consequences of recursive relations from Theorem~\ref{thm:dbgRecursions}. By studying the proof of Theorem~\ref{thm:IHProduct} below, one may see that the weight multiset is already uniquely determined by these recursive relations together with a few additional facts to fix a recursive start. This can be seen as an alternative proof that the weight multiset is independent of the chosen reflection order, cf.\ \crossRef{Corollary~}{cor:reflectionOrderInvariance}.
\end{enumerate}
\end{remark}
\section{Iwahori--Hecke algebra}\label{sec:ihalgebra}

Let us briefly motivate the definition of the Iwahori--Hecke algebra associated with an affine Weyl group.

Under suitable assumptions on our group and our fields, the \emph{Hecke algebra} $\mathcal H(G, I)$ is classically defined to be the complex vector space of all compactly supported functions $f: G(F)\rightarrow\mathbb C$ satisfying $f(i_1 g i_2) = f(g)$ for all $g\in G(F), i_1, i_2\in I\cap G(F)$. It becomes an algebra where multiplication is defined via convolution of functions. In this form, it occurs in the classical formulation of the Satake isomorphism \cite{Satake1963}.

It is proved by Iwahori--Matsomoto \cite[Section~3]{Iwahori1965} for split $G$ that $\mathcal H(G, I)$ has a basis given by $\{S_x\mid x\in \widetilde W\}$ over $\mathbb C$ where the multiplication is uniquely determined by the conditions
\begin{align*}
\begin{array}{ll}
S_x S_y = S_{xy},& x,y\in \widetilde W\text{ and } \ell(xy) = \ell(x)+\ell(y),\\
S_{r_a} S_x = qS_{r_a x} + (q-1) S_x,& x\in \widetilde W, a\in \Delta_\af\text{ and }\ell(r_a x) < \ell(x).
\end{array}
\end{align*}
Here, $q := \#\left(\mathcal O_F/\mathfrak m_{\mathcal O_F}\right)$ is the cardinality of the residue field of $F$. The basis element $S_x$ corresponds to the indicator function of the coset $IxI\subseteq G(\breve F)$.

With the convenient change of variables $T_x := q^{-\ell(x)/2} S_x\in \mathcal H(G, I)$, the above relations get the equally popular form
\begin{align*}
\begin{array}{ll}
T_x T_y = T_{xy},& x,y\in \widetilde W\text{ and } \ell(xy) = \ell(x)+\ell(y),\\
T_{r_a} T_x = T_{r_a x} + (q^{1/2}-q^{-1/2}) T_x,& x\in \widetilde W, a\in \Delta_\af\text{ and }\ell(r_a x) < \ell(x).
\end{array}
\end{align*}
Since the number $q$ is independent of the choice of affine root system, we define the \emph{Iwahori--Hecke algebra} of $\widetilde W$ as follows.
\begin{definition}\label{def:IHAlgebra}
The \emph{Iwahori--Hecke algebra} $\mathcal H(\widetilde W)$ of $\widetilde W$ is the algebra over $\mathbb Z[Q]$ defined by the generators
\begin{align*}
T_x,\qquad x\in \widetilde W
\end{align*}
and the relations
\begin{align*}
\begin{array}{ll}
T_x T_y = T_{xy},& x,y\in \widetilde W\text{ and } \ell(xy) = \ell(x)+\ell(y),\\
T_{r_a} T_x = T_{r_a x} + Q T_x,& x\in \widetilde W, a\in \Delta_\af\text{ and }\ell(r_a x) < \ell(x).
\end{array}
\end{align*}
\end{definition}
One easily sees that $\mathcal H(\widetilde W)$ is a free $\mathbb Z[Q]$-module with basis $\{T_x\mid x\in\widetilde W\}$, and that each $T_x$ is invertible, because
\begin{align*}
T_{r_a} (T_{r_a}-Q) = 1,\qquad a\in \Delta_\af.
\end{align*}
All results presented in this article can be immediately generalized to most other conventions for the Iwahori--Hecke algebra, e.g.\ by substituting $Q = q^{1/2}-q^{-1/2}$.
\subsection{Products via the double Bruhat graph}
We are interested in the question how to express arbitrary products of the form $T_x T_y$ with $x, y\in \widetilde W$ in terms of this basis. This is related to understanding the structure of the subset $IxI \cdot IyI\subseteq G(\breve F)$. While it might be too much to ask for a general formula, we can understand these products (and thus the Iwahori--Hecke algebra) better by relating it to the double Bruhat graph.
Our main result of this section is the following:
\begin{theorem}\label{thm:IHProduct}
Let $C_1>0$ be a constant and define $C_2 := (8\#\Phi^++ 4)C_1$.

Let $x=w_x\varepsilon^{\mu_x},z=w_z\varepsilon^{\mu_z}\in \widetilde W$ such that $x$ is $C_2$-regular and $z$ is $2\ell(x)$-regular. Define polynomials $\varphi_{x,z,yz}\in \mathbb Z[Q]$ via
\begin{align*}
T_x T_z = \sum_{y\in \widetilde W} \varphi_{x,z,yz}T_{yz}\in \mathcal H(\widetilde W).
\end{align*}
Pick an element $y=w_y\varepsilon^{\mu_y}\in \widetilde W$ such that $\ell(x)-\ell(y)<C_1$. Let
\begin{align*}
\LP(x) = \{v_x\},\quad \LP(y) = \{v_y\},\quad \LP(z) = \{v_z\}
\end{align*}
and define the multiset
\begin{align*}
M := \left\{\ell_1+\ell_2\mid \begin{array}{l}(\omega_1,\ell_1)\in \wts(v_x\Rightarrow v_y\dashrightarrow w_z v_z),\\
(\omega_2,\ell_2)\in
\wts(w_x v_x w_0\Rightarrow w_y v_y w_0\dashrightarrow w_y w_z v_z)
\\
\text{s.th.\ }v_y^{-1}\mu_y = v_x^{-1}\mu_x-\omega_1+w_0\omega_2\end{array}\right\}_m.
\end{align*}
Here, $w_0\in W$ denotes the longest element. Then
\begin{align*}
\varphi_{x,z,yz} = \sum_{e\in M} Q^e.
\end{align*}
\end{theorem}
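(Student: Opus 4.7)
The plan is to induct on $\ell(x)$, matching the Iwahori--Hecke algebra recursion obtained by peeling off a simple affine reflection from $T_x$ with the path recursions of Theorem~\ref{thm:dbgRecursions}.

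For $\ell(x) > 0$, pick a simple affine root $a = (\alpha, k) \in \Delta_\af$ with $\ell(r_a x) = \ell(x) - 1$ and set $x' := r_a x$. The defining relations of $\mathcal H(\widetilde W)$ give $T_x = T_{r_a} T_{x'}$, so
\begin{align*}
T_x T_z \;=\; T_{r_a}\bigl(T_{x'} T_z\bigr) \;=\; \sum_{y'\in\widetilde W} \varphi_{x',z,y'z}\, T_{r_a} T_{y'z}.
\end{align*}
Expanding $T_{r_a} T_{y'z}$ according to whether $\ell(r_a y' z)$ exceeds or is less than $\ell(y' z)$ and reading off the $T_{yz}$-coefficient yields the Hecke-side recursion
\begin{align*}
\varphi_{x,z,yz} \;=\; \varphi_{x',z,r_a y z} \;+\; \indicator{\ell(r_a y z) < \ell(y z)}\cdot Q\cdot \varphi_{x',z,yz}.
\end{align*}
The base case $\ell(x)=0$ puts $x\in\Omega$ and $T_x T_z = T_{xz}$; a direct inspection using the regularity hypothesis shows that the multiset $M$ consists of a single copy of $0$ precisely when $y=x$ (forced by the weight constraint and the triviality of increasing closed paths of small length) and is empty otherwise.

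The heart of the proof is to show that the path-counting expression on the right-hand side of the theorem satisfies the same recursion. The passage from $x$ to $x' = r_a x$ replaces $v_x$ by $s_\alpha v_x$ and $w_x v_x w_0$ by $s_\alpha w_x v_x w_0$, after absorbing the translation shift $\mu_{x'} = \mu_x + k w_x^{-1}\alpha^\vee$; the regularity of $x$ ensures that $\LP(x')$ remains a singleton. The strong regularity of $z$ converts the length comparison $\ell(r_a y z)$ vs.\ $\ell(y z)$ into a sign condition on $v_y^{-1}\alpha$ (and the corresponding condition on $(w_y v_y w_0)^{-1}\alpha$), which matches exactly the dichotomy ``$u^{-1}\alpha \in \Phi^\pm$'' separating parts (a) and (b) of Theorem~\ref{thm:dbgRecursions}. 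Applying that theorem to each of the two weight multisets $\wts(v_x\Rightarrow v_y\dashrightarrow w_z v_z)$ and $\wts(w_x v_x w_0\Rightarrow w_y v_y w_0\dashrightarrow w_y w_z v_z)$: in case (a) the bijection $\psi$ is weight- and length-preserving up to the explicit shift $k(\bullet^{-1}\alpha^\vee - \bullet^{-1}\alpha^\vee)$, and one verifies that this shift, combined with the translation of $\mu_x$, preserves the constraint $v_y^{-1}\mu_y = v_x^{-1}\mu_x-\omega_1+w_0\omega_2$, reproducing the single Hecke-term $\varphi_{x',z,r_a y z}$; in case (b) the bijection $\varphi$ splits the new multiset into two pieces, the length-preserving one reproducing $\varphi_{x',z,r_a y z}$ and the length-$+1$ piece contributing the factor of $Q$ that matches $Q\cdot\varphi_{x',z,yz}$.

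The principal obstacle is the bookkeeping of regularity constants. One must verify that the shift $\mu_x \mapsto \mu_x + k w_x^{-1}\alpha^\vee$ never violates the regularity hypotheses needed to invoke the inductive hypothesis at $(x',z)$, the uniqueness of $\LP$ elements, nor the hypotheses of Theorem~\ref{thm:dbgRecursions}. This is why the theorem couples its bounds by $C_2 = (8\#\Phi^+ + 4)C_1$: each of the at most $\ell(x)$ inductive steps degrades the relevant pairings $\langle v_x^{-1}\mu_x,\alpha\rangle$ by a bounded amount, and the factor $8\#\Phi^+$ provides enough slack to absorb this loss across all positive roots while still satisfying the base-case hypothesis. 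Once this bookkeeping is arranged and the two recursions are matched term by term, the induction closes.
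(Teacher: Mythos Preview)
Your induction scheme has a genuine and fatal gap in the regularity bookkeeping. An induction on $\ell(x)$ down to the base case $\ell(x)=0$ requires $\ell(x)$ many steps, and each step shifts $\mu_x$ by $k\,w_x^{-1}\alpha^\vee$, degrading the pairings $\langle v_x^{-1}\mu_x,\beta\rangle$ by a bounded amount. But $\ell(x)$ is not bounded by $C_2$: on the contrary, $C_2$-regularity forces $\ell(x)\geq C_2\cdot(\text{const})$, so $\ell(x)$ is typically far larger than the slack $C_2=(8\#\Phi^++4)C_1$ you have to spend. Long before you reach $\ell(x)=0$ the element has left the shrunken chambers, $\LP(x')$ is no longer a singleton, and the inductive hypothesis no longer applies. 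Your base case $\ell(x)=0$ is unreachable under the stated hypotheses; indeed an element of length $0$ is never $C_2$-regular. (There is also a computational slip: for $x'=r_ax$ one has $\mu_{x'}=\mu_x+k\,w_x^{-1}\alpha^\vee$, so $v_{x'}=v_x$ rather than $s_\alpha v_x$; only $w_xv_x$ becomes $s_\alpha w_xv_x$. This means left multiplication feeds the recursion of the \emph{second} weight multiset only, not both as you assert.)

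The paper avoids this by inducting on the exponent $e$ instead of on $\ell(x)$; since $\deg_Q\varphi_{x,z,yz}\leq\#\Phi^+$, this is a bounded induction. In the inductive step one writes $x=x_1x_2x_3$ length-additively with $x_1=\varepsilon^{4C_1 w_xv_x\rho^\vee}$ and $x_3=\varepsilon^{4C_1 v_x\rho^\vee}$ pure translations, so that $x_2$ is still $(C_2-8C_1)$-regular. One then uses the multiplicativity of the multiset $Y(x,u)$ (encoding all the $\varphi$'s at once) to split the count into contributions from $(y_1,e_1),(y_2,e_2),(y_3,e_3)$ with $e_1+e_2+e_3=e$; a separate argument forces $e_1+e_3\geq 1$, so $e_2\leq e-1$ and the inductive hypothesis applies to the middle piece $x_2$. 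The two weight multisets in $M$ arise not from a single simple-reflection recursion but from two identities (Lemma~\ref{lem:YsetIdentities}) matching $\wts(v_x\Rightarrow u_1\dashrightarrow u_2)$ and $\wts(w_xv_xw_0\Rightarrow u_2w_0\dashrightarrow u_1)$ to the $Y$-multisets of $x_3$ and $x_1$ respectively; those identities are themselves proved by induction on length, but they are statements about full multisets with no regularity hypotheses to maintain. The factor $8\#\Phi^++4$ then exactly accounts for peeling off $4C_1\rho^\vee$ twice per step over at most $\#\Phi^+$ steps, which is the bookkeeping you were looking for.
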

\begin{remark}\label{rem:IHProduct}
\begin{enumerate}[(a)]
\item In principle, we have the following recursive relations to calculate $T_x T_z$ as long as all occuring elements are in shrunken Weyl chambers, e.g.\ $2$-regular: Pick a simple affine root $a=(\alpha,k)\in \Delta_\af$. If $xr_a<x$ (i.e.\ $v_x^{-1}\alpha\in \Phi^+$), then
\begin{align*}
T_x T_z = T_{xr_a} T_{r_a} T_z = \begin{cases} T_{xr_a} T_{r_a z},&r_az>z~(\text{i.e.\ }(w_zv_z)^{-1}\alpha\in \Phi^+),\\
T_{xr_a} T_{r_a z} + QT_{x r_a} T_z,&r_az<z~(\text{i.e.\ }(w_zv_z)^{-1}\alpha\in \Phi^-).\end{cases}
\end{align*}
This kind of recursive relation is analogous to the recursive behaviour of the multiset $\wts(v_x\Rightarrow v_y\dashrightarrow w_z v_z)$, cf.\ Theorem \ref{thm:dbgRecursions}.

Similarly, if $r_ax<x$ (i.e.\ $(w_x v_x)^{-1}\alpha\in \Phi^-$), we get
\begin{align*}
T_x T_z &= T_{r_a} T_{r_a x} T_z = \sum_{y\in \widetilde W} \varphi_{r_a x, z, yz} T_{r_a} T_{yz} \\&=  \sum_{y\in \widetilde W} \varphi_{r_a x, z, yz}\cdot\begin{cases} T_{r_a yz},& r_ayz>yz~(\text{i.e.\ }(w_y w_z v_z)^{-1}\alpha\in \Phi^+),\\ T_{r_a yz}+QT_{yz},& r_ayz<yz~(\text{i.e.\ }(w_y w_z v_z)^{-1}\alpha\in \Phi^-).\end{cases}
\end{align*}
This kind of recursive relation is analogous to the recursive behaviour of the multiset $\wts(w_x v_x w_0\Rightarrow w_y v_y w_0\dashrightarrow w_y w_z v_z)$, cf.\  Theorem \ref{thm:dbgRecursions}.

For the proof of Theorem \ref{thm:IHProduct}, we have to apply these recursive relations iteratively while keeping track of the length and regularity conditions to ensure everything happens inside the shrunken Weyl chambers.
\item Let us compare Theorem~\ref{thm:IHProduct} to the quantum Bruhat graph. In view of \crossRef{Proposition }{prop:qbgVsDbg}, it follows that $\varphi_{x,z,yz} = 0$ unless
\begin{align*}
v_y^{-1}\mu_y \leq v_x^{-1}\mu_x - \wt_{\QB(W)}(v_x\Rightarrow v_y) - \wt_{\QB(W)}(w_y v_y\Rightarrow w_x v_x).
\end{align*}
By \cite[Theorem~4.2]{Schremmer2024_bruhat}, this latter inequality is equivalent to the Bruhat order condition $y\leq x$, which is (by definition of the Iwahori Hecke algebra) always a necessary condition for $\varphi_{x,z,yz}$ to be non-zero.
\item If the condition $\ell(x)-\ell(y)<C_1$ gets strengthened to $\ell(x)+\ell(z)-\ell(yz)<C_1$, it follows that the product $yz$ must be length additive, so $v_y = w_z v_z$ \cite[Lemma~2.13]{Schremmer2022_newton}. One of the simple facts on the double Bruhat graph \crossRef{Lemma~}{lem:qbgNonEmptiness} yields
\begin{align*}
\wts(w_x v_x w_0\Rightarrow w_y v_y w_0\dashrightarrow w_y w_z v_z) = \begin{cases}\emptyset,&w_y v_y \neq w_x v_x,\\
\{(0,0)\}_m,&w_y v_y = w_x v_x.
\end{cases}
\end{align*}

So the multiset $M$ as defined in Theorem~\ref{thm:IHProduct} is empty unless $w_y v_y = w_x v_x$, in which case it will be equal to
\begin{align*}
M = \{\ell\mid (\omega,\ell)\in \wts(v_x\Rightarrow v_y)\text{ s.th.\ }v_y^{-1}\mu_y = v_x^{-1}\mu_x-\omega\}_m.
\end{align*}
This recovers Theorem~\ref{thm:introMultiplication}.

The unique smallest element of $\wts(v_x\Rightarrow v_y)$ from \crossRef{Proposition }{prop:qbgVsDbg} corresponds to the uniquely determined largest element in $\widetilde W$ having non-zero coefficient in $T_x T_z$. This element is known as the \emph{Demazure product} of $x$ and $z$ in $\widetilde W$. We recover the formula for the Demazure product of $x$ and $z$ in terms of the quantum Bruhat graph from He--Nie \cite[Proposition~3.3]{He2024_demazure} in the situation of Theorem~\ref{thm:IHProduct}.
\end{enumerate}
\end{remark}
\begin{definition}
\begin{enumerate}[(a)]
\item
For $x\in \widetilde W$ and $w\in W$, we define the multiset $Y(x,w)$ as follows: The underlying set $\abs{Y(x,w)}$ is a subset of $\widetilde W\times\mathbb Z$, and the multiplicity of the pair $(y,e)\in \widetilde W\times \mathbb Z$ in $Y(x,w)$ is defined via the equation
\begin{align*}
T_x T_{w\varepsilon^{2\rho^\vee \ell(x)}} = \sum_{(y,e)\in Y(x,w)} Q^e T_{yw\varepsilon^{2\rho^\vee \ell(x)}}.
\end{align*}
\item We define the usual product group structure on $\widetilde W\times \mathbb Z$, i.e.\
\begin{align*}
(y_1, e_1)\cdot (y_2, e_2) := (y_1 y_2, e_1+e_2)
\end{align*}
for $y_1, y_2\in \widetilde W$ and $e_1, e_2\in \mathbb Z$. If $M$ is a multiset with $\abs M\subseteq \widetilde W\times \mathbb Z$, we write $M\cdot (y,e)$ for the multiset obtained by the right action of $(y,e)\in \widetilde W\times\mathbb Z$.
\end{enumerate}
\end{definition}
\begin{lemma}\label{lem:YsetFacts}
Let $x,z\in \widetilde W$ such that $z$ is $2\ell(x)$-regular.
\begin{enumerate}[(a)]
\item Write $z = w_z\varepsilon^{\mu_z}$ and $\LP(z) = \{v_z\}$. Then
\begin{align*}
T_x T_{z} = \sum_{(y,e)\in Y(x,w_z v_z)} Q^e T_{yz}.
\end{align*}
\item Let $a = (\alpha,k)\in \Delta_\af$ with $xr_a<x$ and $w\in W$. If $w^{-1}\alpha\in \Phi^+$, we have
\begin{align*}
Y(x,w) = Y(x r_a, s_\alpha w)\cdot(r_a,0).
\end{align*}
If $w^{-1}\alpha\in \Phi^-$, we express $Y(x,w)$ as the additive union of multisets
\begin{align*}
Y(x,w) = \Bigr(Y(x r_a, s_\alpha w)\cdot (r_a,0)\Bigl)\cup \Bigl(Y(xr_a, w)\cdot(1,1)\Bigr).
\end{align*}
\item For $y = w_y \varepsilon^{\mu_y}\in \widetilde W$ and $e\in\mathbb Z$, the multiplicity of $(y,e) \in Y(x,w)$ agrees with the multiplicity of $(y^{-1}, e)$ in $Y(x^{-1}, \cl(y)w)$, where $\cl(y)\in W$ is the classical part of $y\in W\ltimes X_\ast(T)_{\Gamma_0}$.
\end{enumerate}
\end{lemma}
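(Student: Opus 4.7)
The plan is to establish (a) and (b) simultaneously by induction on $\ell(x)$, and then to deduce (c) by a parallel induction that exploits (b) together with a left-sided companion recursion. The base case $\ell(x)=0$, i.e.\ $x\in\Omega$, is trivial: $T_x T_z=T_{xz}$ length-additively, so the defining equation of $Y(x,w)$ forces $Y(x,w)=\{(x,0)\}_m$ for every $w$, and all three assertions follow immediately.

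For the inductive step of (a) and (b), pick a simple affine root $a=(\alpha,k)\in\Delta_\af$ with $xr_a<x$ and factor $T_x=T_{xr_a}T_{r_a}$. Expanding $T_x T_z=T_{xr_a}(T_{r_a}T_z)$ via the Hecke algebra relation produces $T_{r_az}$ when $\ell(r_az)>\ell(z)$ and $T_{r_az}+QT_z$ otherwise. Using the standard length formula together with the $2\ell(x)$-regularity of $z$, this dichotomy is controlled by the sign of $(w_zv_z)^{-1}\alpha$, cf.\ Remark~\ref{rem:IHProduct}(a). A direct check shows that $r_az$ is still sufficiently regular, with $\LP(r_az)=\{v_z\}$ and $\cl(r_az)=s_\alpha w_z$, so the inductive form of (a) at $xr_a$ applies to $T_{xr_a}T_{r_az}$ and to $T_{xr_a}T_z$. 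Substituting, using $y(r_az)=(yr_a)z$, and collecting terms yields
\begin{align*}
T_x T_z = \sum_{(y,e)\in M} Q^e T_{yz},
\end{align*}
where $M=Y(xr_a,s_\alpha w_zv_z)\cdot(r_a,0)$ when $(w_zv_z)^{-1}\alpha\in\Phi^+$, and $M$ is the additive union with $Y(xr_a,w_zv_z)\cdot(1,1)$ when $(w_zv_z)^{-1}\alpha\in\Phi^-$. Specialising this calculation to $z_0=w\varepsilon^{2\rho^\vee\ell(x)}$, for which $\LP(z_0)=\{1\}$ and $\cl(z_0)=w$, and comparing with the defining equation of $Y(x,w)$ identifies $M$ with $Y(x,w)$; this simultaneously establishes (b) and, via the computation above for general $z$, (a).

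For (c), I induct on $\ell(x)$ and use both (b) and a left-sided analogue. Starting from $T_x=T_{r_a}T_{r_ax}$ when $r_ax<x$, applying inductive (a) to $T_{r_ax}T_{w\varepsilon^{2\rho^\vee\ell(x)}}$ and then multiplying by $T_{r_a}$ produces the left-recursion
\begin{align*}
Y(x,w) = (r_a,0)\cdot Y(r_ax,w)\ \cup\ \{(1,1)\cdot(y'',e):(y'',e)\in Y(r_ax,w),\ (\cl(y'')w)^{-1}\alpha\in\Phi^-\}.
\end{align*}
Given $xr_a<x$, equivalently $r_ax^{-1}<x^{-1}$, I apply (b) to $Y(x,w)$ and the left-recursion to $Y(x^{-1},\cl(y)w)$ and match pair-by-pair. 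The non-$Q$ contributions pair $(y,e)=(y_0r_a,e)\in Y(xr_a,s_\alpha w)\cdot(r_a,0)$ with $(y^{-1},e)=(r_ay_0^{-1},e)\in(r_a,0)\cdot Y((xr_a)^{-1},\cl(y)w)$; the identity $\cl(y_0r_a)=\cl(y_0)s_\alpha$ shows that the indexing multisets are precisely those identified by the inductive hypothesis of (c) at $xr_a$. The $Q$-shifted contributions occur precisely when $w^{-1}\alpha\in\Phi^-$ on both sides: on the left-recursion side the condition reduces via $\cl(y_0^{-1})\cl(y_0)=1$ to the same inequality, and the inductive hypothesis equates the multiplicities.

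The main obstacle is the regularity bookkeeping in the induction: the passage $z\leadsto r_az$ perturbs the translation part by $kw_z^{-1}\alpha^\vee$, so the pairings with positive roots drop by at most a bounded constant depending on the Cartan integers of $\Phi$. One must verify that $r_az$ still meets the regularity threshold required for inductive (a) at $xr_a$; in corner cases involving large Cartan integers (e.g.\ type $G_2$), this may require mild slack in the stated $2\ell(x)$-regularity constant, but the point is uniform and combinatorial. Beyond this, the algebraic steps are routine applications of the Hecke algebra relations and the definition of $Y(x,w)$.
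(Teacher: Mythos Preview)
Your inductive scheme is sound in spirit, but the paper handles all three parts more directly, and in particular avoids the regularity gap you flag.

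For (a), the paper does not induct at all: it factors $z$ rather than $x$. Writing $z=z_1 z_2$ length-additively with $z_1=w_zv_z\varepsilon^{2\rho^\vee\ell(x)}$ and $z_2=v_z^{-1}\varepsilon^{\mu_z-v_z\cdot 2\rho^\vee\ell(x)}$, the very definition of $Y(x,w_zv_z)$ computes $T_xT_{z_1}$, and the $2\ell(x)$-regularity of $z_1$ forces $\LP(yz_1)=\{1\}$ for each $y\leq x$, so $T_{yz_1}T_{z_2}=T_{yz}$. This proves (a) with the stated constant $2\ell(x)$, with no loss of regularity and no case analysis on Cartan integers. Your induction, by contrast, must pass from $z$ to $r_az$ at each step, and in types with Cartan integer $3$ the drop can exceed $2$; as you note, this does not establish (a) under the stated hypothesis. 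Part (b) then follows in the paper from (a) by choosing $z$ superregular and expanding $T_{xr_a}T_{r_a}T_z$ once---essentially your computation, but with (a) already available.

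For (c), the paper replaces your parallel induction by a one-line trace argument: for $z$ and $yz$ both $2\ell(x)$-regular, the $T_{yz}$-coefficient of $T_xT_z$ equals the $T_1$-coefficient of $T_{(yz)^{-1}}T_xT_z=T_{z^{-1}}T_{x^{-1}}T_{yz}$, which in turn equals the $T_z$-coefficient of $T_{x^{-1}}T_{yz}$. Two applications of (a) finish. Your inductive matching via the right recursion (b) and a companion left recursion does work (the key identity $\cl(y^{-1})\cl(y)=1$ collapses the side condition to $w^{-1}\alpha\in\Phi^-$, and $\cl(yr_a)s_\alpha w=\cl(y)w$ aligns the two indexings), but it is considerably longer and again leans on (a), which you have only established up to a possibly larger regularity constant.
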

\begin{proof}
\begin{enumerate}[(a)]
\item
The regularity condition allows us to write $z$ as the length additive product
\begin{align*}
z = z_1\cdot z_2,\qquad z_1 = w_z v_z\varepsilon^{2\rho^\vee \ell(x)},\qquad z_2 = v_z^{-1}\varepsilon^{\mu_z - v_z 2\rho^\vee \ell(x)}.
\end{align*}
Then we get
\begin{align*}
T_x T_z = T_x T_{z_1} T_{z_2} = \sum_{(y,e)\in Y(x,w_z v_z)} T_{yz_1} T_{z_2}.
\end{align*}
By regularity of $z_1$, it follows that $\LP(yz_1) = \LP(z_1) = \{1\}$ for each $y\leq x$ in the Bruhat order. Thus $T_{yz_1} T_{z_2} = T_{yz_1 z_2} = T_{yz}$ for each $(y,e)\in Y(x,w_z v_z)$.
\item Let $z = w\varepsilon^\mu$ with $\mu$ superregular and dominant, as in (a). Use the fact
\begin{align*}
T_x T_z = T_{xr_a} T_{r_a} T_z
\end{align*}
and evaluate $T_{r_a} T_z$ depending on whether $w^{-1}\alpha$ is positive or negative.
\item We consider the symmetrizing form of $\mathcal H(\widetilde W)$ given by
\begin{align*}
\tau : \mathcal H(\widetilde W)\rightarrow\mathbb Z[Q],\qquad \sum_{x\in\widetilde W} a_x T_x\mapsto a_1.
\end{align*}
One checks that $\tau(T_x T_{x^{-1}}) = 1$ and $\tau(T_x T_y)=0$ for $x,y\in\widetilde W$ with $xy\neq 1$, cf.\ \cite[Section~4.1D]{Bonnafe2017}. It follows from this that $\tau(hh') = \tau(h'h)$ for all $h,h'\in\mathcal H(\widetilde W)$, and that $\tau(T_{x^{-1}} h)$ is the $T_x$-coeffient of $h$ for $x\in\widetilde W$.

Moreover, note that $T_x\mapsto T_{x^{-1}}$ defines an anti-automorphism of the $\mathbb Z[Q]$-algebra $\mathcal H(\widetilde W)$, and that $\tau$ is invariant under this map.

Fix $y\in\widetilde W$ and assume that both $z$ and $yz$ are $2\ell(x)$-regular. We calculate
\begin{align*}
&\sum_{e\in\mathbb Z}\left(\text{multiplicity of }(y,e)\text{ in }Y(x,w_z v_z)\right)Q^e
\\=&\text{(coefficient of $T_{yz}$ in $T_x T_z$)}
\\=&\tau(T_{(yz)^{-1}} T_x T_z)
\\=&\tau(T_{z^{-1}} T_{x^{-1}} T_{yz})
\\=&\text{(coefficient of $T_{z}$ in $T_x^{-1} T_{yz}$)}
\\=&\sum_{e\in\mathbb Z}\left(\text{multiplicity of }(y^{-1},e)\text{ in }Y(x^{-1} ,w_y w_z v_z)\right)Q^e.
\end{align*}
Comparing coefficients of $Q^e$ in $\mathbb Z[Q]$, the claim follows.
\qedhere\end{enumerate}
\end{proof}
\begin{remark}
The connection to our previous article \cite{Schremmer2023_orbits} is given as follows: For $x,z$ as in Lemma~\ref{lem:YsetFacts}, the regularity condition on $z$ basically ensures that $z I z^{-1}$ behaves like $\prescript{w_z v_z}{}{}U(L)$, so we can approximate $IzI$ by the semi-infinite orbit $I z \prescript{v_z}{}{}U(L) = I\prescript{w_z v_z}{}{}U(L)z$. Then $IxI\cdot IzI$ is very close to
\begin{align*}
IxI\cdot \prescript{w_z v_z}{}{}U(L)z = \bigcup_{(y,e)\in Y(x,w_z v_z)} Iy\prescript{w_z v_z}{}{}U(L)z\subseteq G(\breve F).
\end{align*}
Now observe for any $y\in\widetilde W$ that \begin{align*}
IxI\cap I y\prescript{w_z v_z}{}{}U(L)\neq\emptyset\iff y\in IxI\cdot \prescript{w_z v_z}{}{}U(L).\end{align*}
So the multiset $Y(x,w)$ is the representation-theoretic correspondent of the main object of interest in \crossRef{Theorem~}{thm:generalizedMV}.
\end{remark}
\begin{lemma}\label{lem:YsetIdentities}
Let $x = w_x\varepsilon^{\mu_x}\in \widetilde W$ and pick elements $u_1, u_2\in W$ as well as $v_x\in \LP(x)$.
\begin{enumerate}[(a)]
\item The multiset $\wts(v_x\Rightarrow u_1\dashrightarrow u_2)$ is equal to the additive union of multisets
\begin{align*}
\bigcup_{(w_y \varepsilon^{\mu_y},e)\in Y(x,u_2)}\Bigl\{(v_x^{-1}\mu_x - u_1^{-1}\mu_y+\omega,e+\ell)\mid (\omega,\ell)\in \wts(w_x v_x\Rightarrow w_y u_1\dashrightarrow w_y u_2)\Bigr\}_m.
\end{align*}
\item The multiset $\wts(w_x v_x w_0\Rightarrow u_2 w_0 \dashrightarrow u_1)$ is equal to the additive union of multisets
\begin{align*}
\bigcup_{\substack{u_3\in W\\
(w_y \varepsilon^{\mu_y},e)\in Y(x, u_3)\\
\text{s.th.\ }w_yu_3 = u_1}} \Bigl\{&(w_0 u_2^{-1} w_y\mu_y-w_0v_x^{-1}\mu_x + \omega,e+\ell)\\[-1cm]&\quad\mid (\omega,\ell) \in \wts(v_x w_0\Rightarrow w_y^{-1} u_2 w_0\dashrightarrow u_3)\Bigr\}_m.
\end{align*}
\end{enumerate}
\end{lemma}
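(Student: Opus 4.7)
The plan is to prove part (a) by induction on $\ell(x)$, playing the algebraic recursion of Lemma~\ref{lem:YsetFacts}(b) off against the combinatorial recursion of Theorem~\ref{thm:dbgRecursions}, and then to deduce part (b) from (a) via the inversion duality of Lemma~\ref{lem:YsetFacts}(c).

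For the base case $\ell(x)=0$ of (a), the element $x$ lies in $\Omega$, $T_xT_z=T_{xz}$ is length-additive for every $z$, and $Y(x,u_2)$ collapses to the singleton $\{(x,0)\}_m$. Both sides of (a) then reduce to a single weight multiset identity, which follows from the fact that length-zero elements act by an automorphism of the double Bruhat graph preserving the padded structure and the weight grading. For the inductive step, I would pick a simple affine root $a=(\alpha,k)\in\Delta_\af$ with $xr_a<x$ (equivalently $v_x^{-1}\alpha\in\Phi^+$ by Remark~\ref{rem:IHProduct}(a)). Lemma~\ref{lem:YsetFacts}(b) then writes $Y(x,u_2)$ as $Y(xr_a,s_\alpha u_2)\cdot(r_a,0)$, plus $Y(xr_a,u_2)\cdot(1,1)$ when $u_2^{-1}\alpha\in\Phi^-$. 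In parallel, the reformulation of Theorem~\ref{thm:dbgRecursions} stated in the remark following it expresses $\wts(v_x\Rightarrow u_1\dashrightarrow u_2)$ recursively---provided $u_2^{-1}\alpha\in\Phi^-$, which one can arrange by a suitable choice of $a$ or by iterating---as a shift of $\wts(s_\alpha v_x\Rightarrow s_\alpha u_1\dashrightarrow s_\alpha u_2)$ together with a shift of $\wts(s_\alpha v_x\Rightarrow u_1\dashrightarrow u_2)$. Invoking the inductive hypothesis for $xr_a$ with an appropriate choice of $v_{xr_a}\in\LP(xr_a)$ and collecting terms, the identity reduces to checking that the shift $v_x^{-1}\mu_x-u_1^{-1}\mu_y+\omega$ differs from the shifts appearing on the reduced pieces by exactly the correction $k(u_1^{-1}\alpha^\vee-v_x^{-1}\alpha^\vee)$ or $-ku_1^{-1}\alpha^\vee$ predicted by Theorem~\ref{thm:dbgRecursions}, under the substitution $y=y'r_a$ with $\mu_y=s_\alpha\mu_{y'}+k\alpha^\vee$ (respectively $y=y'$, $\mu_y=\mu_{y'}$ in the $(1,1)$-shifted piece).

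For part (b), the duality of Lemma~\ref{lem:YsetFacts}(c) identifies pairs $(y,e)\in Y(x,u_3)$ with $w_yu_3=u_1$ with pairs $(y^{-1},e)\in Y(x^{-1},u_1)$. Combined with $x^{-1}=w_x^{-1}\varepsilon^{-w_x\mu_x}$ and the natural identification $v_{x^{-1}}=w_xv_xw_0$ of length-positive elements under inversion, this rewrites the right-hand side of (b) term by term as the right-hand side of (a) applied to $x^{-1}$, with middle vertex $u_2w_0$ and end vertex $u_1$. The two left-hand sides agree for the same reason, so (b) reduces directly to (a).

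The main obstacle I anticipate is the bookkeeping in the inductive step of (a): choosing a length-positive element of $xr_a$ compatible with the induction---which need not be simply $s_\alpha v_x$---correctly tracking how the dashed condition $\dashrightarrow u_2$ evolves (where the padding apparatus of Lemma~\ref{lem:dbgRecursions} is designed to help), and reconciling the independent case splits (based on the signs of $u_1^{-1}\alpha$ and $u_2^{-1}\alpha$) between the algebraic and combinatorial recursions.
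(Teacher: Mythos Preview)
Your plan is correct and matches the paper's proof almost exactly: induction on $\ell(x)$ for (a), using Lemma~\ref{lem:YsetFacts}(b) on the algebraic side against Theorem~\ref{thm:dbgRecursions} on the combinatorial side, and then deducing (b) from (a) via the inversion duality of Lemma~\ref{lem:YsetFacts}(c) together with $w_xv_xw_0\in\LP(x^{-1})$.

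Two of your anticipated obstacles evaporate. First, whenever $xr_a<x$ with $v_x\in\LP(x)$, one always has $s_\alpha v_x\in\LP(xr_a)$; this is standard and is exactly what the paper uses, so there is no ambiguity in choosing the length-positive element for $xr_a$. Second, you do not need to ``arrange'' $u_2^{-1}\alpha\in\Phi^-$: the case $u_2^{-1}\alpha\in\Phi^+$ is handled directly by applying Theorem~\ref{thm:dbgRecursions}(a) with the roles of $(u,v,v')$ and $(s_\alpha u,s_\alpha v,s_\alpha v')$ exchanged---the hypothesis there becomes $(s_\alpha u_2)^{-1}\alpha\in\Phi^-$ and $(s_\alpha v_x)^{-1}\alpha\in\Phi^-$, both of which hold---yielding a single shifted piece that matches the single piece $Y(xr_a,s_\alpha u_2)\cdot(r_a,0)$ from Lemma~\ref{lem:YsetFacts}(b). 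No iteration is needed, and the padding machinery of Lemma~\ref{lem:dbgRecursions} is not invoked beyond its role in proving Theorem~\ref{thm:dbgRecursions}.
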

\begin{proof}
\begin{enumerate}[(a)]
\item Induction on $\ell(x)$. In case $\ell(x)=0$, we get $Y(x,u_2) = \{(x,0)\}_m$. From \crossRef[ (c)]{Lemma }{lem:pathSymmetries}, we indeed get that $\wts(v_x\Rightarrow u_1\dashrightarrow u_2)$ is equal to
\begin{align*}
\{(v_x^{-1}\mu_x - u_1^{-1}\mu_x + \omega,\ell)\mid (\omega,\ell)\in \wts(w_x v_x\Rightarrow w_x u_1\dashrightarrow w_x u_2)\}_m.
\end{align*}
Now in the inductive step, pick a simple affine root $a = (\alpha,k)$ with $xr_a<x$.
This means $v_x^{-1}\alpha\in \Phi^+$ and $v_{x'} := s_\alpha v_x\in \LP(x')$, where 
\begin{align*}
x' := w_{x'}\varepsilon^{\mu_{x'}} := xr_a = w_xs_\alpha\varepsilon^{s_\alpha(\mu_x) + k\alpha^\vee}.
\end{align*}
Let us first consider the case $u_2^{-1}\alpha \in \Phi^+$. Then $Y(x,u_2) = Y(x', s_\alpha u_2)\cdot (r_a,0)$ by Lemma \ref{lem:YsetFacts} (b). We get
\begin{align*}
&\bigcup_{(w_y \varepsilon^{\mu_y},e)\in Y(x,u_2)}\Bigl\{(v_x^{-1}\mu_x - u_1^{-1}\mu_y+\omega,e+\ell)\mid (\omega,\ell)\in \wts(w_x v_x\Rightarrow w_y u_1\dashrightarrow w_y u_2)\Bigr\}_m
\\=&\bigcup_{(w_{y'} \varepsilon^{\mu_{y'}},e)\in Y(x',s_\alpha u_2)}\Bigl\{(v_{x'}^{-1}\mu_x' + kv_x^{-1} \alpha^\vee - (s_\alpha u_1)^{-1}\mu_{y'} - k u_1^{-1}\alpha^\vee+\omega,e+\ell)\mid \\&\hspace{4cm}(\omega,\ell)\in \wts(w_{x'} v_{x'}\Rightarrow w_{y'} (s_\alpha u_1)\dashrightarrow w_{y'} (s_\alpha u_2))\Bigr\}_m.
\end{align*}
By the inductive assumption, this is equal to
\begin{align*}
\{(\omega + k(v_x^{-1}\alpha^\vee - u_1^{-1}\alpha^\vee),\ell)\mid (\omega,\ell)\in \wts(s_\alpha v_x\Rightarrow s_\alpha u_1\dashrightarrow s_\alpha u_2)\}_m.
\end{align*}
By Theorem \ref{thm:dbgRecursions} (a), this is equal to $\wts(v_x\Rightarrow u_1\dashrightarrow u_2)$, using the assumption $u_2^{-1}\alpha\in \Phi^+$ again.

In the converse case where $u_2^{-1}\alpha\in \Phi^-$, we argue entirely similarly. Use Lemma~\ref{lem:YsetFacts} to write\begin{align*}
Y(x,u_2) = \Bigl(Y(x', s_\alpha u_2)\cdot (r_a,0)\Bigr) \cup \Bigl(Y(x', u_2)\cdot(1,1)\Bigr)
\end{align*}
Considering Theorem \ref{thm:dbgRecursions} (b), the inductive claim follows.
\item One may argue similarly to (a), tracing through somewhat more complicated expressions to reduce to Theorem~\ref{thm:dbgRecursions} again. Instead, we show that (a) and (b) are equivalent. Recall that $w_x v_x w_0\in \LP(x^{-1})$ \cite[Lemma~2.12]{Schremmer2022_newton}. By (a), we see that $\wts(w_x v_x w_0\Rightarrow u_2 w_0\dashrightarrow u_1)$ is equal to
\begin{align*}
\bigcup_{(w_y \varepsilon^{\mu_y},e)\in Y(x^{-1}, u_1)}\Bigl\{&\quad((w_x v_x w_0)^{-1}(-w_x\mu_x) - (u_2 w_0)^{-1} \mu_y + \omega,e+\ell),
\\[-0.5cm]&\mid (\omega,\ell)\in \wts(v_x w_0\Rightarrow w_y u_2 w_0\dashrightarrow w_y u_1)\Bigr\}.
\end{align*}
In view of Lemma \ref{lem:YsetFacts} (c), we recover the claim in (b).
\qedhere\end{enumerate}
\end{proof}
\begin{lemma}\label{lem:IHProduct}
Let $C_1,e\geq 0$ be two non-negative integers. Define $C_2 := (8e+4) C_1$.

Let $x,y\in \widetilde W$ such that $x$ is $C_2$-regular and $\ell(x)-\ell(y)< C_1$. Let $u\in W$. Write
\begin{align*}
\begin{array}{lll}
x = w_x\varepsilon^{\mu_x},& y = w_y \varepsilon^{\mu_y},\\
\LP(x) = \{v_x\},&\LP(y) = \{v_y\}.
\end{array}
\end{align*}

Define the multiset
\begin{align*}
M := \left\{\ell_1+\ell_2\mid \begin{array}{l}(\omega_1, \ell_1) \in \wts(v_x\Rightarrow v_y \dashrightarrow u),\\(\omega_2, \ell_2)\in \wts(w_x v_xw_0\Rightarrow w_y v_yw_0\dashrightarrow w_y u)\\\text{ s.th.\ }v_y^{-1}\mu_y = v_x^{-1}\mu_x - \omega_1+w_0\omega_2\end{array}\right\}_m.
\end{align*}
Then the multiplicity of $(y,e)$ in $Y(x,u)$ agrees with the multiplicity of $e$ in $M$.
\end{lemma}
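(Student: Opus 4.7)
The natural approach is a double induction mirroring the proofs of Lemma~\ref{lem:YsetFacts} and Lemma~\ref{lem:YsetIdentities}: induct on $\ell(x)$, using Lemma~\ref{lem:YsetFacts}(b) to recurse on the $Y$-set side and Theorem~\ref{thm:dbgRecursions} to recurse on the multiset-of-weights side, then match term by term. The base case is when $\ell(x)$ is small enough that the regularity hypothesis forces a trivial situation (e.g.\ $C_2$-regular with $C_2 = 0$, or $\ell(x)-\ell(y) < C_1$ becoming vacuous); there $Y(x,u) = \{(x,0)\}_m$ and both sides are checked by hand, the only nontrivial entry coming from the empty paths on both sides of $M$.

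For the inductive step with $\ell(x) \geq 1$, pick a simple affine root $a=(\alpha,k)\in \Delta_\af$ with $xr_a<x$, equivalently $v_x^{-1}\alpha\in \Phi^+$. Setting $x' := xr_a$, one has $\ell(x')=\ell(x)-1$, $v_{x'}=s_\alpha v_x$, $w_{x'}=w_xs_\alpha$, $\mu_{x'}=s_\alpha\mu_x+k\alpha^\vee$, and crucially $w_{x'}v_{x'}w_0 = w_xv_xw_0$. By Lemma~\ref{lem:YsetFacts}(b), the multiplicity of $(y,e)$ in $Y(x,u)$ breaks into a contribution from $(yr_a,e)$ in $Y(x',s_\alpha u)$ and, when $u^{-1}\alpha\in\Phi^-$, an additional contribution from $(y,e-1)$ in $Y(x',u)$. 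Each of these is given, by the inductive hypothesis, as the cardinality of a multiset of the same form as $M$, but for $(x', yr_a)$ or $(x', y)$ respectively.

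Next, apply Theorem~\ref{thm:dbgRecursions} to both weight multisets in $M$: to $\wts(v_x\Rightarrow v_y\dashrightarrow u)$ using case (b) of the theorem (valid since $v_x^{-1}\alpha\in\Phi^+$ and provided $u^{-1}\alpha\in\Phi^-$), and to $\wts(w_xv_xw_0\Rightarrow w_yv_yw_0\dashrightarrow w_yu)$ via case (a) or (b) depending on the sign of $(w_xv_xw_0)^{-1}\alpha$ (and requiring $(w_yu)^{-1}\alpha\in\Phi^-$). One then does a case split on the four possible sign combinations of $u^{-1}\alpha$ and $(w_yu)^{-1}\alpha$. In each case, the weight-shift formula $k(v_y^{-1}\alpha^\vee - v_x^{-1}\alpha^\vee)$ or $-kv_x^{-1}\alpha^\vee$ together with the identity $\mu_{yr_a}=s_\alpha\mu_y+k\alpha^\vee$ shows that the linking constraint $v_y^{-1}\mu_y = v_x^{-1}\mu_x - \omega_1 + w_0\omega_2$ transforms into the corresponding constraint for $(x', yr_a)$ or $(x',y)$; the length-shift of $0$ or $+1$ aligns with the $(r_a,0)$ or $(1,1)$ factor on the $Y$-set side. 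Summing up, both sides have the same recursion, completing the inductive step.

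\textbf{Regularity and main obstacle.} Since $v_{x'}^{-1}\mu_{x'} = v_x^{-1}\mu_x - kv_x^{-1}\alpha^\vee$ and $|\langle v_x^{-1}\alpha^\vee,\beta\rangle|$ is bounded by a root-system constant $c_0$, the element $x'$ is $(C_2-c_0)$-regular. The calibration $C_2 = (8e+4)C_1$ is chosen so that the recursion depth, bounded in terms of $\ell(x)-\ell(y)<C_1$ and the number of $(1,1)$-branches which is at most $e$, still leaves $x'$ sufficiently regular to apply the inductive hypothesis with the appropriate smaller value of $e$. The hard part will be the bookkeeping in the fourfold case analysis — particularly when the two sign conditions $u^{-1}\alpha$ and $(w_yu)^{-1}\alpha$ disagree, forcing an asymmetric application of Theorem~\ref{thm:dbgRecursions} to the two weight multisets whose compatibility must be traced through the linking constraint relating $\omega_1$ and $\omega_2$.
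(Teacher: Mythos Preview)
Your approach has a genuine gap in the termination argument. You propose to induct on $\ell(x)$, using the recursion $(x,y,u,e)\to(x',yr_a,s_\alpha u,e)$ (and possibly $(x',y,u,e-1)$). But the depth of the first branch is governed by $\ell(x)$, which is \emph{unbounded}: it can be arbitrarily large compared to the regularity constant $C_2=(8e+4)C_1$. Each step $x\mapsto xr_a$ erodes the regularity of $x$ by a bounded amount, so after roughly $C_2$ steps the element $x'$ is no longer regular enough for $\LP(x')$ to be a singleton, and the inductive hypothesis no longer applies. Your claim that the recursion depth is ``bounded in terms of $\ell(x)-\ell(y)<C_1$ and the number of $(1,1)$-branches which is at most $e$'' is simply false for the first branch: along that branch the gap $\ell(x)-\ell(y)$ does not decrease when $yr_a<y$, and $e$ stays fixed, so nothing forces termination before regularity is exhausted.

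The paper avoids this entirely by inducting on $e$, not on $\ell(x)$. The inductive step writes $x=x_1x_2x_3$ as a length-additive product with $x_1=\varepsilon^{4C_1 w_xv_x\rho^\vee}$ and $x_3=\varepsilon^{4C_1 v_x\rho^\vee}$ pure translations, so that $Y(x,u)$ decomposes into triples $(y_i,e_i)\in Y(x_i,\cdot)$ with $e=e_1+e_2+e_3$. The key observation is that $e_1=e_3=0$ forces $e_2=0$ as well; hence whenever $e>0$ one has $e_2<e$, and the inductive hypothesis applies to the middle piece $x_2$, which is still $(8(e-1)+4)C_1$-regular by construction. This is precisely where the calibration $C_2=(8e+4)C_1$ is used. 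The simple-reflection recursion (Theorem~\ref{thm:dbgRecursions}) enters only indirectly, through Lemma~\ref{lem:YsetIdentities}, which is an identity for the \emph{entire} multiset $\wts(\cdot\Rightarrow\cdot\dashrightarrow\cdot)$ summed over all $(y,e)\in Y(x,u)$ --- so no single-$y$ regularity bookkeeping is needed there.
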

\begin{proof}
Induction on $e$. Consider the inductive start $e=0$. If $0 \in M$, then $\ell_1=\ell_2=0$ and $v_x=v_y$ by definition of $M$. Hence $x=y$, and indeed $0\in M$ has multiplicity $1$. Similarly, $(y,0)$ also has multiplicity $1$ in $Y(x,u)$.

If $0\notin M$, we see $x\neq y$ and indeed $(y,0)\notin Y(x,u)$ for $x\neq y$. This settles the inductive start.

In the inductive step, let us write $x$ as length additive product $x = x_1 x_2 x_3$ where
\begin{align*}
x_1 = \varepsilon^{4C_1 w_x v_x \rho^\vee},\qquad x_2 = w_x\varepsilon^{\mu_x - 8C_1 v_x \rho^\vee},\qquad x_3 = \varepsilon^{4C_1v_x \rho^\vee}.
\end{align*}
Note that the inductive assumptions are satisfied for $C_1, e-1,x_2$ and any element $y'\in \widetilde W$ such that $\ell(x_2)-\ell(y')<C_1$.

The length additivity of $x = x_1 x_2 x_3$ implies
\begin{align*}
Y(x,u) = \Bigl\{(y_1 y_2 y_3, e_1+e_2+e_3)\mid \begin{array}{l}(y_3, e_3)\in Y(x_3, u),\\(y_2, e_2)\in Y(x_2, \cl(y_3)u),\\(y_1,e_1)\in Y(x_1, \cl(y_2)\cl(y_3)u)\end{array}\Bigr\}_m.
\end{align*}
Pick elements
\begin{align*}
(y_3, e_3)\in Y(x_3, u),\quad (y_2, e_2)\in Y(x_2, \cl(y_3)u),\quad (y_1, e_1)\in Y(x_1, \cl(y_2)\cl(y_3)u)
\end{align*}
such that $\ell(y_1 y_2 y_3)>\ell(x)-C_1$ and $e_1+e_2+e_3=e$.

In this case, we certainly get $\ell(y_i)>\ell(x_i)-C_1$ for $i=1,2,3$. Since $x_1, x_2, x_3$ are $4C_1$-regular by construction, it follows that each $y_i$ is $2C_1$-regular by $y_i\leq x_i$ and $\ell(y_i)>\ell(x_i)-C_1$ (studying how regularity behaves in a sequence of Bruhat covers from $y_i$ to $x_i$). We claim that
\begin{align*}
\ell(y_1y_2y_3)=\ell(y_1)+\ell(y_2)+\ell(y_3).
\end{align*}
We can study the question of length additivity of such products using \cite[Lemma~2.13]{Schremmer2022_newton}. This lemma expresses the condition $\ell(xy) = \ell(x)+\ell(y)$ in terms of the \emph{length functionals} $\ell(x,\cdot)$ and $\ell(y,\cdot)$ as defined in \cite[Definition~2.5]{Schremmer2022_newton}. Using the aforementioned lemma, it suffices to see that $\ell(y_1 y_2) = \ell(y_1)+\ell(y_2)$ and $\ell(y_2 y_3)= \ell(y_2)+\ell(y_3)$ (using regularity). If $y_1 y_2$ is not a length additive product, we use the \cite[Lemma~2.13]{Schremmer2022_newton} to find a root $\alpha\in \Phi$ with $\ell(y_1,\cl(y_2)\alpha)>0$ and $\ell(y_2,\alpha)<0$. By regularity, this means $\ell(y_1,\cl(y_2)\alpha)>C_1$ and $\ell(y_2,\alpha)<-C_1$. Using \cite[Corollary~2.10 and Lemma~2.12]{Schremmer2022_newton}, we get \begin{align*}
\ell(y_1 y_2) &= \sum_{\beta\in \Phi} \frac 12\abs{\ell(y_1,\cl(y_2)\beta)+\ell(y_2,\beta)}\\&\leq - C_1 + \sum_{\beta\in \Phi} \frac 12(\abs{\ell(y_1,\cl(y_2)\beta)} + \abs{\ell(y_2,\beta)}) =\ell(y_1)+\ell(y_2)-C_1.
\end{align*}
This contradicts the above assumption $\ell(y_1 y_2 y_3)>\ell(x)-C_1\geq \ell(y_1)+\ell(y_2)+\ell(y_3)-C_1$. The proof that $y_2 y_3$ is length additive is completely analogous.

Let us consider the special case $e_1=e_3=0$ separately. Then $y_1=x_1$ and $y_3=x_3$. The length additivity of the product $x_1 y_2 x_3$ implies that $\LP(y_2) = \{v_x\}$ and $\cl(y_2) = w_x$. Using Lemma \ref{lem:YsetIdentities} (a), we can express $\{(0,0)\}_m = \wts(v_x\Rightarrow v_x\dashrightarrow u)$ in the form
\begin{align*}
\bigcup_{(w_y\varepsilon^{\mu_y},e')\in Y(x_2, u)}\{(\cdots,e'+\ell)\mid (\omega,\ell)\in \wts(w_x v_x\Rightarrow w_y v_x\dashrightarrow w_y u)\}_m.
\end{align*}
From this and \crossRef{Lemma~}{lem:qbgNonEmptiness}, it follows that $Y(x_2, u)$ contains only one element $(y',e')$ with $\cl(y') = w_x$, and that this element must be equal to $(x_2,0)$.

We see that, if $e_1=e_3=0$, we must also have $e_2=0$. This case has been settled before.

We hence assume that $e_1+e_3>0$. In particular, we may apply the inductive assumption to $x_2, y_2, e_2$. Recall that the multiplicity of $(y,e)$ in $Y(x,u)$ is equal to the number of tuples (with multiplicity)
\begin{align*}
(y_3, e_3)\in Y(x_3, u),\quad (y_2,e_2)\in Y(x_2, \cl(y_3)u),\quad (y_1, e_1)\in Y(x_1, \cl(y_2)\cl(y_3)u)
\end{align*}
such that $e_1+e_2+e_3=e$ and $y = y_1y_2 y_3$ (necessarily length additive). Hence $\LP(y_2) = \{\cl(y_3) v_y\}$ and $w_y = \cl(y_1)\cl(y_2)\cl(y_3)$. By induction, the multiplicity of $(y,e)$ in $Y(x,u)$ is also equal to the number of tuples (with multiplicity)
\begin{align*}
(y_3, e_3)\in& Y(x_3, u),\\(\omega_1, \ell_1)\in& \wts(v_x\Rightarrow \cl(y_3)v_{y}\dashrightarrow \cl(y_3) u),\\
(\omega_2, \ell_2)\in &\wts(w_x v_x w_0\Rightarrow \cl(y_1)^{-1}w_yv_{y}w_0\dashrightarrow \cl(y_1)^{-1}w_yu),
\\(y_1, e_1)\in& Y(x_1, \cl(y_1)^{-1}w_yu),
\end{align*}
satisfying $e = e_1+\ell_1+\ell_2+e_3$ and
\begin{align*}
y_1^{-1} y y_3^{-1} = \cl(y_1)^{-1} w_y \cl(y_3)^{-1}\varepsilon^{ (\cl(y_3) v_y)(v_x^{-1}\mu_{x_2} -\omega_1+w_0 \omega_2)}.
\end{align*}
The latter identity can be rewritten, if we write $y_3 = w_3\varepsilon^{\mu_3}$ and $y_1 = w_1\varepsilon^{\mu_1}$, as
\begin{align*}
v_y^{-1}\mu_y = v_x^{-1}\mu_{x_2} - \omega_1+w_0\omega_2 + v_y^{-1}\mu_3 + (w_yv_y)^{-1}w_1\mu_1.
\end{align*}
We see that we may study the contributions of $(y_3, e_3, \omega_1,\ell_1)$ and $(y_1, e_1, \omega_2, \ell_2)$ separately.

We may combine the above data for $(y_3, e_3, \omega_1,\ell_1)$, noticing that we are only interested in the multiset
\begin{align*}
\Bigl\{(-v_y^{-1}\mu_3+\omega_1 + v_x^{-1}\mu_{x_3}, e_3+\ell_1)\mid \begin{array}{c}(w_3\varepsilon^{\mu_3},e_3)\in Y(x_3,u),\\
(\omega_1,\ell_1)\in \wts(v_x\Rightarrow w_3 v_y\dashrightarrow w_3u)\end{array}\Bigr\}_m.
\end{align*}
By Lemma \ref{lem:YsetIdentities} (a), the above multiset agrees with $\wts(v_x\Rightarrow v_y\dashrightarrow u)$.

Similarly, we may combine the data for $(y_1, e_1, \omega_2, \ell_2)$, noticing that we are only interested in the multiset
\begin{align*}
\Bigl\{&(w_0(w_y v_y)^{-1}w_1\mu_1+\omega_2 - w_0 (w_xv_x)^{-1}\mu_{x_1}, e_1+\ell_2)\mid\\&\quad \begin{array}{c}u'\in W\\(w_1\varepsilon^{\mu_1},e_1)\in Y(x_1,u')\text{ s.th.\ }w_1 u' = w_yu\\
(\omega_2,\ell_2)\in \wts(w_xv_xw_0 \Rightarrow w_1^{-1}w_y v_yw_0 \dashrightarrow w_1^{-1} w_yu)\end{array}\Bigr\}_m.
\end{align*}
By Lemma \ref{lem:YsetIdentities} (b), the above multiset agrees with $\wts(w_x v_x w_0\Rightarrow w_y v_y w_0\dashrightarrow w_y u)$.

We summarize that the multiplicity of $(y,e)$ in $Y(x,u)$, i.e.\ the number of tuples $(y_3,e_3, \omega_1,\ell_1,\omega_2,\ell_2,y_1,e_1)$ with multiplicity as above, is equal to the number of tuples
\begin{align*}
(\lambda_1, f_1)\in&\wts(v_x\Rightarrow v_y\dashrightarrow u)
\\(\lambda_2,f_2)\in&\wts(w_x v_x w_0\Rightarrow w_y v_y w_0\dashrightarrow w_y u)
\end{align*}
satisfying $e = f_1 + f_2$ and
\begin{align*}
v_y^{-1}\mu_y = v_x^{-1} \mu_{x_2} - \lambda_1 + v_x^{-1}\mu_{x_3} + w_0\lambda_2 + (w_x v_x)^{-1}\mu_{x_1}.
\end{align*}
Up to evaluating the product $x=x_1 x_2 x_3 \in W\ltimes X_\ast(T)_{\Gamma_0}$, this finishes the induction and the proof.
\end{proof}
\begin{corollary}
\label{cor:productUpperBound}
Let $x = w_x\varepsilon^{\mu_x}, z = w_z\varepsilon^{\mu_z}\in \widetilde W$. Write
\begin{align*}
T_{x} T_{z} = \sum_{y\in \widetilde W}\sum_{e\geq 0} n_{y,e} Q^e T_{yz},\qquad n_{y,e}\in\mathbb Z_{\geq 0}.
\end{align*}
Pick elements $v_x\in \LP(x_x),v_z\in \LP(x_z),e\in\mathbb Z_{\geq 0}$ and $y = w_y \varepsilon^{\mu_y} \in \widetilde W$. Then $n_{y,e}$ is at most equal to the multiplicity of the element
\begin{align*}
\left(v_x^{-1}\left(\mu_x - w_x^{-1} w_y\mu_y\right), e\right)
\end{align*}
in the multiset
\begin{align*}
\wts(v_x\Rightarrow w_y^{-1} w_x v_x\dashrightarrow w_z v_z).
\end{align*}
\end{corollary}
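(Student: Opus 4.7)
The argument proceeds in two stages. First, we establish the intermediate bound that $n_{y,e}$ is at most the multiplicity of $(y,e)$ in $Y(x, w_z v_z)$; second, we translate this $Y$-multiplicity bound into the path multiset via Lemma~\ref{lem:YsetIdentities}(a).

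For the first stage, pick a sufficiently dominant cocharacter $\lambda$ and set $\tilde z := z \cdot \varepsilon^{v_z \lambda}$. Because $v_z \in \LP(z)$ and $\lambda$ is chosen large enough, the standard length formula yields $\ell(\tilde z) = \ell(z) + \ell(\varepsilon^{v_z\lambda})$, so $T_z T_{\varepsilon^{v_z\lambda}} = T_{\tilde z}$; moreover, $\tilde z$ is $2\ell(x)$-regular with $\LP(\tilde z) = \{v_z\}$, and Lemma~\ref{lem:YsetFacts}(a) identifies the coefficient of $Q^e T_{y\tilde z}$ in $T_x T_{\tilde z}$ as precisely the multiplicity of $(y,e)$ in $Y(x, w_z v_z)$. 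On the other hand,
\begin{equation*}
T_x T_{\tilde z} = T_x T_z T_{\varepsilon^{v_z\lambda}} = \sum_{y', e'} n_{y', e'} Q^{e'} T_{y' z} T_{\varepsilon^{v_z\lambda}}.
\end{equation*}
The key Hecke-algebra observation is that for any $a, b \in \widetilde W$, the coefficient of $T_{ab}$ in the product $T_a T_b$ is a polynomial in $Q$ with non-negative integer coefficients and constant term equal to $1$. This follows by induction on $\ell(a)$: at $Q = 0$, the defining relation reduces to $T_{r_a} T_x = T_{r_a x}$ irrespective of the sign of $\ell(r_a x) - \ell(x)$, so $T_a T_b$ specializes to $T_{ab}$ at $Q = 0$. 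Applied with $a = yz$ and $b = \varepsilon^{v_z \lambda}$ (so $ab = y \tilde z$), the diagonal term $y' = y$ in the expansion above contributes $n_{y, e}$ to the $Q^e$-coefficient of $T_{y \tilde z}$; the off-diagonal contributions from $y' \neq y$ are all non-negative, since all structure constants of products of standard basis elements are non-negative polynomials in $Q$. Comparing with the Lemma~\ref{lem:YsetFacts}(a) identification yields $n_{y, e} \leq $ (multiplicity of $(y, e)$ in $Y(x, w_z v_z)$).

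For the second stage, we apply Lemma~\ref{lem:YsetIdentities}(a) with $u_1 := w_y^{-1} w_x v_x$ and $u_2 := w_z v_z$. With these choices $w_y u_1 = w_x v_x$, so the inner multiset $\wts(w_x v_x \Rightarrow w_x v_x \dashrightarrow w_y w_z v_z)$ equals $\{(0, 0)\}_m$ by \crossRef{Lemma~}{lem:qbgNonEmptiness}. Consequently, the contribution from $(y, e) \in Y(x, w_z v_z)$ in the additive-union decomposition of Lemma~\ref{lem:YsetIdentities}(a) places the element $(v_x^{-1}(\mu_x - w_x^{-1} w_y \mu_y), e)$ into the multiset $\wts(v_x \Rightarrow w_y^{-1} w_x v_x \dashrightarrow w_z v_z)$ with multiplicity at least the multiplicity of $(y, e)$ in $Y(x, w_z v_z)$. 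Chaining this with the first-stage bound gives the desired inequality.

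The main obstacle is the careful Hecke-algebra bookkeeping in the first stage: one must simultaneously verify the constant-term claim for coefficients of $T_{ab}$ in $T_a T_b$ and ensure the off-diagonal terms from $y' \neq y$ contribute non-negatively rather than cancel. The former is elementary once one notes the $Q = 0$ specialization, and the latter follows from positivity of Hecke structure constants together with the length-additivity $T_z T_{\varepsilon^{v_z\lambda}} = T_{\tilde z}$, which is where the hypothesis $v_z \in \LP(z)$ is essential.
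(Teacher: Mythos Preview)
Your proof is correct. The overall mechanism is the same as in the paper: multiply by a suitable translation to reach a regime where the coefficients are computed exactly, and use positivity of the Hecke structure constants to argue that this can only increase $n_{y,e}$. The difference is in which lemma you land on.

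The paper translates on \emph{both} sides: it replaces $(x,y,z)$ by $(\varepsilon^{w_xv_x\lambda_1}x,\;\varepsilon^{w_xv_x\lambda_1}y,\;z\varepsilon^{v_z\lambda_2})$, with $\lambda_1,\lambda_2$ dominant and large. This makes the new $x$ $C_2$-regular and the new $z$ $2\ell(x)$-regular, so Lemma~\ref{lem:IHProduct} applies directly and gives the exact multiplicity (the second weight multiset in the lemma collapses to $\{(0,0)\}_m$ because $w_{y'}v_{y'}=w_xv_x$ after the translation). You instead translate only $z$, bound $n_{y,e}$ by the multiplicity of $(y,e)$ in $Y(x,w_zv_z)$, and then read the latter off from Lemma~\ref{lem:YsetIdentities}(a), which holds for arbitrary $x$ without any regularity hypothesis. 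Your route is thus slightly more economical (one translation instead of two, and it bypasses the heavier Lemma~\ref{lem:IHProduct}); the paper's is a one-line reduction to its main computational lemma. Both rest on the same positivity observation, and your $Q=0$ specialization argument for the constant term of $T_{ab}$ in $T_aT_b$ is exactly the right way to isolate the diagonal contribution.
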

\begin{proof}
Let us write $\mathcal H(\widetilde W)_{\geq 0}$ for the subset of those elements of $\mathcal H(\widetilde W)$ which are non-negative linear combinations of elements of the form $Q^e T_x$ for $e\in \mathbb Z_{\geq 0}$ and $x\in \widetilde W$.
For dominant coweights $\lambda_1, \lambda_2\in X_\ast(T)_{\Gamma_0}$, we obtain
\begin{align*}
&T_{\varepsilon^{w_x v_x \lambda_1} x} T_{z\varepsilon^{v_z \lambda_2}} = T_{\varepsilon^{w_x v_x \lambda_1}} T_x T_z T_{\varepsilon^{v_z\lambda_2}}
\\=&\sum_{y\in \widetilde W}\sum_{e\geq 0} n_{y,e} Q^e T_{\varepsilon^{w_x v_x \lambda_1}}T_{yz}T_{\varepsilon^{v_z\lambda_2}}
\\\in&\sum_{y\in \widetilde W}\sum_{e\geq 0} n_{y,e} Q^e T_{\varepsilon^{w_x v_x \lambda_1}yz\varepsilon^{v_z\lambda_2}}+\mathcal H(\widetilde W)_{\geq 0}.
\end{align*}
So the quantity $n_{y,e}$ can only increase if we replace $(x,y,z)$ by $(\varepsilon^{w_x v_x\lambda_1} x,\varepsilon^{w_x v_x \lambda_1} y,z\varepsilon^{w_z v_z\lambda_2})$. Choosing our dominant coweights $\lambda_1, \lambda_2$ appropriately regular, the claim follows from Lemma~\ref{lem:IHProduct}.
\end{proof}
\begin{proof}[Proof of Theorem~\ref{thm:IHProduct}]
In view of Corollary~\ref{cor:productUpperBound} and the definition of paths in the double Bruhat graph, it follows easily that for all $x,y,z\in \widetilde W$, the degree of the polynomial $\varphi_{x,y,z}$ in $\mathbb Z[Q]$ is bounded from above by $\#\Phi^+$ (re-proving this well-known fact). Thus the theorem follows by assuming $e\leq \#\Phi^+$ in Lemma~\ref{lem:IHProduct} (noticing that also the multiset $M$ cannot contain elements $>\#\Phi^+$ using the definition of paths in the double Bruhat graph).
\end{proof}
\subsection{Class polynomial}
Choose for each $\sigma$-conjugacy class $\mathcal O\subseteq \widetilde W$ a minimal length element $x_{\mathcal O}\in \mathcal O$. Then the class polynomials associated with each $x\in\widetilde W$ are the uniquely determined polynomials $f_{x,\mathcal O}\in \mathbb Z[Q]$ satisfying
\begin{align*}
T_x \equiv \sum_{\mathcal O} f_{x,\mathcal O} T_{x_{\mathcal O}}\pmod{[\mathcal H,\mathcal H]_\sigma},
\end{align*}
where $[\mathcal H,\mathcal H]_\sigma$ is the $\mathbb Z[Q]$-submodule of $\mathcal H$ generated by the elements of the form
\begin{align*}
[h,h']_\sigma = hh' - h'\sigma(h)\in\mathcal H.
\end{align*}
These polynomials $f_{x,\mathcal O}\in\mathbb Z[Q]$ are independent of the choice of minimal length representatives $x_{\mathcal O}\in\mathcal O$, and there is an explicit algorithm to compute them, cf.\ \cite{He2014b}. Using this algorithm, one easily sees the following boundedness property: Whenever $\ell(x)<\ell(x_{\mathcal O})$, we must have $f_{x,\mathcal O}=0$. The main result of this section is the following.

\begin{theorem}\label{thm:classPolynomialViaDbg}
Let $B>0$ be any real number. There exists an explicitly described constant $B'>0$, depending only on $B$ and the root system $\Phi$, such that the following holds true:

Let $x = w\varepsilon^\mu\in \widetilde W$ be $B'$-regular and write $\LP(x) = \{v\}$. For each $\sigma$-conjugacy class $\mathcal O\subseteq\widetilde W$ with $\langle v^{-1}\mu-\nu(\mathcal O),2\rho\rangle\leq B$ and $\kappa(\mathcal O) = \kappa(x)$, we have
\begin{align*}
f_{x,\mathcal O} = \sum_{\substack{(\omega,e)\in \wts(v\Rightarrow\sigma(wv))\text{ s.t.}\\ \nu(\mathcal O) = \avg_\sigma(v^{-1}\mu-\omega)}} Q^e\in\mathbb Z[Q].
\end{align*}
\end{theorem}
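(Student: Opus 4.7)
The plan is to derive Theorem~\ref{thm:classPolynomialViaDbg} from Theorem~\ref{thm:IHProduct} by computing a single auxiliary product $T_x T_z$ in two different ways and matching coefficients in the $\sigma$-cocenter. For each $\sigma$-conjugacy class $\mathcal O$ satisfying the hypotheses, fix a straight minimal length representative $\tau_\mathcal O \in \mathcal O$ (existence by He~\cite{He2014}), so that $\ell(\tau_\mathcal O) = \langle \nu(\mathcal O), 2\rho \rangle$ and $T_{\tau_\mathcal O}$ represents the cocenter basis element $T_\mathcal O$. Choose $z = \sigma(wv)\,\varepsilon^\lambda$ with $\lambda$ a very dominant $\sigma$-invariant cocharacter whose regularity will be absorbed into the final constant $B'$. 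This choice arranges $\LP(z) = \{1\}$ and $w_z v_z = \sigma(wv)$, so paths produced by the product formula go from $v_x = v$ to $\sigma(wv)$, as desired; moreover each $\tau_\mathcal O z$ is length-additive and itself a straight minimal length element of its own $\sigma$-conjugacy class, and these classes are pairwise distinct across $\mathcal O$ (distinguished by Newton points $\nu(\mathcal O) + \avg_\sigma(\lambda)$ together with the matched Kottwitz invariant).

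On one hand, Theorem~\ref{thm:IHProduct}, in its simplified form from Theorem~\ref{thm:introMultiplication} (applicable here since the regularity of $\lambda$ forces length-additivity of $yz$ in the support), yields
\[
T_x T_z = \sum_{y \in \widetilde W} \varphi_{x,z,y}\, T_y, \qquad \varphi_{x,z,y} = \sum_p Q^{\ell(p)},
\]
summed over paths $p$ from $v$ to $\sigma(wv)$ with weight $\wt(p) = v^{-1}\mu + \lambda - \sigma(wv)^{-1} \mu_y$. On the other hand, the class polynomial decomposition $T_x \equiv \sum_\mathcal O f_{x,\mathcal O}\, T_{\tau_\mathcal O} \pmod{[\mathcal H, \mathcal H]_\sigma}$, combined with length-additivity of $\tau_\mathcal O z$, yields modulo the cocenter the identity
\[
T_x T_z \equiv \sum_\mathcal O f_{x,\mathcal O}\, T_{\tau_\mathcal O z} \pmod{[\mathcal H, \mathcal H]_\sigma},
\]
where each $T_{\tau_\mathcal O z}$ directly represents the cocenter basis element $T_{[\tau_\mathcal O z]}$.

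Equating the two expansions in the basis $\{T_{\mathcal O'}\}$, and using that under the regularity every $y$ in the support of $T_xT_z$ is itself straight and minimal in $[y]$ (so $T_y$ already coincides with $T_{[y]}$), yields
\[
f_{x,\mathcal O} = \sum_{y \in \widetilde W : [y] = [\tau_\mathcal O z]} \varphi_{x,z,y}.
\]
Translating the Newton point identification $\nu([y]) = \nu(\mathcal O) + \avg_\sigma(\lambda)$ via the weight equation above (a short computation using the $\sigma$-invariance of $\lambda$) becomes precisely the condition $\avg_\sigma(v^{-1}\mu - \wt(p)) = \nu(\mathcal O)$ on the corresponding paths, and summing $Q^{\ell(p)}$ over them recovers the stated formula.

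The main obstacle is verifying that the regularity on $x$ suffices to guarantee that every $y$ in the support of $T_xT_z$ is a straight minimal length element of its $\sigma$-conjugacy class; this is what reduces the cocenter comparison to a direct coefficient match and sidesteps the subtle fact that $[\mathcal H,\mathcal H]_\sigma$ is not obviously a one-sided ideal of $\mathcal H$. Ancillary technical points include the injectivity of $\mathcal O \mapsto [\tau_\mathcal O z]$ on the relevant range (ensured by the Kottwitz invariance condition $\kappa(\mathcal O) = \kappa(x)$ and distinct Newton shifts), and justifying the cocenter substitution $T_x T_z \equiv \sum_\mathcal O f_{x,\mathcal O}\, T_{\tau_\mathcal O z}$ via explicit length-additivity rather than a naive ideal property. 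The explicit constant $B'$ is then read off by combining the regularity $C_2 = (8\#\Phi^+ + 4)C_1$ required by Theorem~\ref{thm:IHProduct} with $C_1$ chosen to absorb both $B$ and the maximal path length $\#\Phi^+$, along with the bound $\ell(\tau_\mathcal O) \leq \langle v^{-1}\mu, 2\rho\rangle + B$.
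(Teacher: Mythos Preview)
Your proposal has a genuine gap at the step where you substitute the class polynomial decomposition of $T_x$ into the product $T_x T_z$, i.e.\ where you claim
\[
T_x T_z \equiv \sum_{\mathcal O} f_{x,\mathcal O}\, T_{\tau_{\mathcal O} z} \pmod{[\mathcal H,\mathcal H]_\sigma}.
\]
This would require $[\mathcal H,\mathcal H]_\sigma$ to be stable under right multiplication by $T_z$, but it is not a right ideal: for a typical generator $[a,b]_\sigma = ab - b\sigma(a)$, one computes
\[
[a,b]_\sigma\cdot T_z - [a,\,bT_z]_\sigma \;=\; b\bigl(T_z\,\sigma(a) - \sigma(a)\,T_z\bigr),
\]
an ordinary commutator which has no reason to lie in $[\mathcal H,\mathcal H]_\sigma$. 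You flag this yourself, but the phrase ``via explicit length-additivity'' does not furnish an argument; length-additivity of $\tau_{\mathcal O} z$ only gives $T_{\tau_{\mathcal O}} T_z = T_{\tau_{\mathcal O} z}$, not the preservation of $[\mathcal H,\mathcal H]_\sigma$ under right multiplication by $T_z$. The difficulty is real: the passage from $T_x$ to $\sum_{\mathcal O} f_{x,\mathcal O} T_{\tau_{\mathcal O}}$ in general involves many commutators coming from the He--Nie reduction, and tracking them through a right multiplication is exactly what one wants to avoid.

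The paper sidesteps this by never multiplying by an auxiliary element. Instead it splits $x$ itself length-additively as $x = x_1 x_2$ with $x_1 = wv\varepsilon^{\mu_1}$ and $x_2 = v^{-1}\varepsilon^{\mu_2}$, and uses a \emph{single} $\sigma$-commutator
\[
T_x = T_{x_1}T_{x_2} \equiv T_{x_2}\,\sigma(T_{x_1}) \pmod{[\mathcal H,\mathcal H]_\sigma},
\]
which holds by definition. The product $T_{x_2} T_{\sigma(x_1)}$ is then evaluated by Theorem~\ref{thm:introMultiplication} with $v_{x_2}=v$ and $w_{\sigma(x_1)} v_{\sigma(x_1)} = \sigma(wv)$; modulo terms of length $\leq \ell(x)-B-1$ the output is a sum of $Q^e T_{\varepsilon^{v^{-1}\mu-\omega}}$ over $(\omega,e)\in\wts(v\Rightarrow\sigma(wv))$. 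These are dominant $1$-regular translations, whose class polynomials are trivially $0$ or $1$, and the low-length error terms contribute nothing to $f_{x,\mathcal O}$ by the length bound on class polynomials. No module property of the cocenter is needed.
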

\begin{remark}\label{rem:classPolynomials}
\begin{enumerate}[(a)]
\item Our proof reduces Theorem~\ref{thm:classPolynomialViaDbg} to Theorem~\ref{thm:IHProduct}. This yields a short and instructive proof, but results in a very large value of $B'$. One may alternatively compare the aforementioned algorithm of He--Nie directly with Theorem~\ref{thm:dbgRecursions} to obtain a significantly smaller value of $B'$.
\item Explicit formulas for the full class polynomials, rather than just degree and sometimes leading coefficients, have been very rare in the past. One exception of this are the elements with finite Coxeter part as studied in \cite{He2022}. In the setting of Theorem~\ref{thm:classPolynomialViaDbg}, this means that $v^{-1}\sigma(wv)\in W$ has a reduced expression in $W$ where every occurring simple reflection lies in a different $\sigma$-orbit in $S$. Then the class polynomial from \cite[Theorem~7.1]{He2022} is, translating to our notation as above, given by $Q^{\ell(v^{-1}\sigma(wv))}$.

Write $v^{-1}\sigma(wv) = s_{\alpha_1}\cdots s_{\alpha_n}$ for such a reduced expression as above, and choose a reflection order $\prec$ with $\alpha_1\prec\cdots\prec\alpha_n$. Then one sees that there is only one unlabelled $\prec$-increasing path from $v$ to $\sigma(wv)$ in the double Bruhat graph, given by
\begin{align*}
v\rightarrow v s_{\alpha_1}\rightarrow\cdots\rightarrow vs_{\alpha_1}\cdots s_{\alpha_n} = \sigma(wv).
\end{align*}
This path has length $n$. Since the simple coroots $\alpha_1,\dotsc,\alpha_n$ lie in pairwise distinct $\sigma$-orbits, it follows for any coroot $\omega \in\mathbb Z\Phi^\vee$ that there is at most one choice of integers $m_1,\dotsc,m_n\in\mathbb Z$ with
\begin{align*}
m_1\alpha_1^\vee+\cdots+m_n\alpha_n^\vee\equiv \omega \in X_\ast(T)_{\Gamma}.
\end{align*}
With a bit of bookkeeping, one may explicitly describe $\wts(v\Rightarrow\sigma(wv))$ as a multiset of pairs $(\omega,n)$, each with multiplicity one, for exactly those coweights $\omega$ which are non-negative linear combinations of the simple coroots $\alpha_1^\vee,\dotsc,\alpha_n^\vee$. This easy double Bruhat theoretic calculation recovers \cite[Theorem~7.1]{He2022} in the setting of Theorem~\ref{thm:classPolynomialViaDbg}.
\item Let $J\subseteq \Delta$ be the support of $v^{-1}\sigma(wv)$ in $W$. Let $v^J\in W^J$ be the unique minimal length element in $vJ$. Write $v = v^J v_1$ and $\sigma(wv) = v^J v_2$ so that $v_1, v_2\in W_J$. Choosing a suitable reflection order, we get a one to one correspondence between paths in the double Bruhat graph of $W$ from $v$ to $\sigma(wv)$ and paths in the double Bruhat graph of $W_J$ from $v_1$ to $v_2$. The resulting statement on class polynomials recovers \cite[Theorem~C]{He2015b} in the setting of Theorem~\ref{thm:classPolynomialViaDbg}.
\end{enumerate}
\end{remark}
\begin{proof}[Proof of Theorem~\ref{thm:classPolynomialViaDbg}]
Define $C_1 := B+1$, and let $C_2>0$ be as in Theorem~\ref{thm:IHProduct}.

By choosing $B'$ appropriately, we may assume that we can write $x$ as a length-additive product
\begin{align*}
x = x_1 x_2,\qquad x_1 = wv\varepsilon^{\mu_1},\qquad x_2 = v^{-1}\varepsilon^{\mu_2}
\end{align*}
such that $x_1$ is $2\ell(x_2)$-regular and $x_2$ is $C_2$-regular. Observe that $\LP(x_2) = \{v\}$ and $\LP(x_1) = \{1\}$.
Then
\begin{align*}
T_x = T_{x_1} T_{x_2}\equiv T_{x_2}\sigma(T_{x_1})\pmod{[\mathcal H, \mathcal H]_\sigma}.
\end{align*}
Write $\mathcal H_{\leq \ell(x)-B-1}$ for the $\mathbb Z[Q]$-submodule of $\mathcal H$ generated by all elements $T_z$ satisfying $\ell(z)<\ell(x)-B$.

Using Theorem~\ref{thm:introMultiplication}, we may write
\begin{align*}
T_{x_2}T_{\sigma(x_1)} \equiv \sum_{(\omega,e)\in\wts(v\Rightarrow\sigma(wv))} Q^e T_{\varepsilon^{v^{-1}\mu-\omega}}\pmod{\mathcal H_{\leq \ell(x)-B-1}}.
\end{align*}
So if $\mathcal O$ satisfies $\langle v^{-1}\mu-\nu(\mathcal O),2\rho\rangle\leq B$, we see that
\begin{align*}
f_{x,\mathcal O} = \sum_{(\omega,e)\in\wts(v\Rightarrow\sigma(wv))} Q^e f_{\varepsilon^{v^{-1}\mu-\omega},\mathcal O}.
\end{align*}
Here, we used the above observation that $f_{y,\mathcal O}=0$ if $\ell(y)<\langle \nu(\mathcal O),2\rho\rangle$. By regularity of $v^{-1}\mu$ with respect to $\omega$, we see that $v^{-1}\mu-\omega$ is always dominant and $1$-regular in the above sum. Hence
\begin{align*}
f_{\varepsilon^{v^{-1}\mu-\omega},\mathcal O} = \begin{cases}1,&\text{if }\nu(\mathcal O) = \avg_\sigma(v^{-1}\mu-\omega),\\
0,&\text{otherwise}.\end{cases}
\end{align*}
The claim follows.
\end{proof}

\section{Affine Deligne--Lusztig varieties}\label{sec:ADLV}
One crucial feature of the class polynomials $f_{x,\mathcal O}$ is that they encode important information on the geometry of affine Deligne--Lusztig varieties.
\begin{theorem}[{\cite[Theorem~2.19]{He2015}}]\label{thm:adlvViaClassPolynomials}
Let $x\in\widetilde W$ and $[b]\in B(G)$. Denote
\begin{align*}
f_{x,[b]} := \sum_{\mathcal O} Q^{\ell(\mathcal O)} f_{x,\mathcal O}\in\mathbb Z[Q],
\end{align*}
where the sum is taken over all $\sigma$-conjugacy classes $\mathcal O\subset \widetilde W$ whose image in $B(G)$ is $[b]$. For each such $\sigma$-conjugacy class $\mathcal O$, we write
\begin{align*}
\ell(\mathcal O) = \min\{\ell(y)\mid y\in\mathcal O\}.
\end{align*}
Then $X_x(b)\neq\emptyset$ if and only if $f_{x,[b]}\neq 0$. In this case,
\begin{align*}
\dim X_x(b) = \frac 12\left(\ell(x) + \deg(f_{x,[b]})\right)-\langle \nu(b),2\rho\rangle
\end{align*}
and the number of $J_b(F)$-orbits of top dimensional irreducible components in $X_x(b)$ is equal to the leading coefficient of $f_{x,[b]}$.\rightqed
\end{theorem}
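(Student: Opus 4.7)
The plan is to reduce the statement to minimal length elements via the Deligne--Lusztig reduction of Görtz--He, and then match the resulting recursion with the recursion defining the class polynomials in the cocenter of $\mathcal H(\widetilde W)$. Concretely, for any $x\in\widetilde W$ and any simple affine reflection $s\in S_\af$, I would use the classical dichotomy:
\begin{enumerate}[(i)]
\item If $\ell(sxs)=\ell(x)$, there is an isomorphism $X_x(b)\cong X_{sxs}(b)$.
\item If $\ell(sxs)=\ell(x)-2$, then $X_x(b)$ admits a locally closed decomposition into two pieces: one is an $\mathbb A^1$-bundle over $X_{sxs}(b)$ and the other maps finitely (with controlled fibres) to an open subvariety of $X_{sx}(b)$, and this decomposition is equivariant for the $J_b(F)$-action.
\end{enumerate}
This is exactly the geometric shadow of the Iwahori--Hecke identities $T_sT_xT_s=T_{sxs}$ in case (i) and $T_sT_xT_s=T_{sxs}+QT_{sx}$ (modulo lower length) in case (ii), which in the cocenter translate into the recursion $f_{x,\mathcal O}=f_{sxs,\mathcal O}$ respectively $f_{x,\mathcal O}=Qf_{sxs,\mathcal O}+f_{sx,\mathcal O}$.

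The next step is the base case. By He's theorem on minimal length elements (part of the work culminating in \cite{He2014}), any $\sigma$-conjugacy class $\mathcal O\subseteq\widetilde W$ contains minimal length elements $x_{\mathcal O}$, and for such an element $X_{x_{\mathcal O}}(b)$ is either empty (when $[\dot x_{\mathcal O}]\neq [b]$) or is an iterated fibration whose total dimension equals $\ell(x_{\mathcal O})-\langle\nu(b),2\rho\rangle$ with $J_b(F)$ acting transitively on its set of top dimensional irreducible components. In this situation $f_{x_{\mathcal O},\mathcal O'}=\delta_{\mathcal O,\mathcal O'}$, so both sides of the claimed formula reduce to the same explicit statement about a single class.

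To finish, I would induct on $\ell(x)$. Given any $x\in\widetilde W$ one of the following holds: either $x$ has minimal length in its $\sigma$-conjugacy class (base case above), or there exists $s\in S_\af$ with $\ell(sxs)<\ell(x)$ or $\ell(sxs)=\ell(x)$ but $x$ not $\sigma$-minimal, and one may apply (i) or (ii). Tracking the $\deg$ and leading coefficient through the recursion in (ii) exactly matches the effect on $f_{x,[b]}$ of the recursion on class polynomials: the $\mathbb A^1$-bundle accounts for the factor $Q$ multiplying $f_{sxs,[b]}$, and the additional piece accounts for the summand $f_{sx,[b]}$. Combined with (i), the dimension formula $\dim X_x(b)=\tfrac12(\ell(x)+\deg f_{x,[b]})-\langle\nu(b),2\rho\rangle$ and the irreducible-components count propagate inductively from the minimal length case.

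The main obstacle in this approach is the geometric input of step (ii): establishing the $\mathbb A^1$-bundle / finite map decomposition of $X_x(b)$ carefully enough to keep track of $J_b(F)$-orbits of top dimensional irreducible components (not just dimensions). This is essentially the content of the Görtz--He reduction, and its delicate point is verifying that the two pieces really behave additively on the level of top components — i.e.\ that they either have strictly different dimensions or that the dominant one alone carries the leading coefficient. Once this is set up, the rest is a matter of matching two recursions that are formally identical.
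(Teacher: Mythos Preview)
The paper does not prove this statement; it is quoted verbatim from \cite[Theorem~2.19]{He2015} and closed with a \texttt{\textbackslash rightqed}. Your outline is indeed the strategy He uses there (Deligne--Lusztig reduction plus the minimal-length theory), so the approach is the right one. That said, several details of your sketch are off and would cause the argument to fail as written.

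First, every occurrence of $sxs$ must be replaced by $sx\sigma(s)$: the cocenter is taken with respect to the $\sigma$-twisted commutator $[h,h']_\sigma = hh'-h'\sigma(h)$, and the geometric reduction likewise compares $X_x(b)$ with $X_{sx\sigma(s)}(b)$, not $X_{sxs}(b)$.

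Second, the geometry in case (ii) is misstated. When $sx<x$ and $sx\sigma(s)<sx$, the decomposition of $X_x(b)$ consists of a closed piece which is an $\mathbb A^1$-bundle over $X_{sx\sigma(s)}(b)$ and an open piece which is a $\mathbb G_m$-bundle over $X_{sx}(b)$ (not a finite map). Correspondingly, the Hecke recursion is
\[
f_{x,\mathcal O}=f_{sx\sigma(s),\mathcal O}+Q\,f_{sx,\mathcal O},
\]
not $Qf_{sxs,\mathcal O}+f_{sx,\mathcal O}$. With your version the dimension bookkeeping does not match: in the correct version both pieces raise dimension by $1$, and one checks $\ell(x)+\deg f_{x,[b]} = \max\bigl(\ell(sx\sigma(s))+\deg f_{sx\sigma(s),[b]},\ \ell(sx)+\deg f_{sx,[b]}\bigr)+2$, which is exactly what the geometry gives. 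The irreducible-component count is then straightforward, since when both pieces attain the maximal dimension their top components add, matching the addition of leading coefficients.

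Third, your induction on $\ell(x)$ does not terminate as stated: case (i) preserves length, so one could loop indefinitely. The missing input is the theorem of He--Nie that if $x$ is not of minimal length in its $\sigma$-conjugacy class, then a \emph{finite} chain of length-preserving $\sigma$-cyclic shifts reaches some $x'$ admitting a strictly length-decreasing step. This is precisely what makes the recursive definition of the $f_{x,\mathcal O}$ well-founded, and it is needed on the geometric side too.
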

Combining with the explicit description of class polynomials from Theorem~\ref{thm:classPolynomialViaDbg}, we conclude the following.
\begin{proposition}\label{prop:ADLVSuperregular}
Let $B>0$ be any real number. There exists an explicitly described constant $B'>0$, depending only on $B$ and the root system $\Phi$, such that the following holds true:

Let $x = w\varepsilon^\mu\in \widetilde W$ be $B'$-regular and write $\LP(x) = \{v\}$. Let $[b]\in B(G)$ such that $\langle v^{-1}\mu-\nu(b),2\rho\rangle<B$ and $\kappa(b) = \kappa(x)$. Let $E$ denote the multiset
\begin{align*}
E = \{e\mid (\omega,e)\in \wts(v\Rightarrow\sigma(wv))\text{ s.th.\ }\nu(b) = \avg_\sigma(v^{-1}\mu-\omega)\}_m.
\end{align*}
Then $X_x(b)\neq\emptyset$ if and only if $E\neq\emptyset$. In this case, set $e := \max(E)$. Then
\begin{align*}
\dim X_x(b) = \frac 12\left(\ell(x)+e-\langle \nu(b),2\rho\rangle\right),
\end{align*}
and the number of $J_b(F)$-orbits of top dimensional irreducible components of $X_x(b)$ is equal to the multiplicity of $e$ in $E$.
\end{proposition}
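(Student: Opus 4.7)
The plan is to combine Theorem~\ref{thm:classPolynomialViaDbg} with He's formula (Theorem~\ref{thm:adlvViaClassPolynomials}), simplifying the length contribution via the $\sigma$-invariance of $\rho$. First choose $B'$ at least as large as the constant produced by Theorem~\ref{thm:classPolynomialViaDbg} applied to the bound $B$, so that every $\sigma$-conjugacy class $\mathcal{O}\subseteq\widetilde W$ with $\kappa(\mathcal{O}) = \kappa(x)$ and $\langle v^{-1}\mu - \nu(\mathcal{O}), 2\rho\rangle \leq B$ lies within its hypotheses.

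Each pair $(\omega,e)\in\wts(v\Rightarrow\sigma(wv))$ corresponds to the $\sigma$-conjugacy class $\mathcal{O}_\omega\subset\widetilde W$ of the dominant superregular translation $\varepsilon^{v^{-1}\mu-\omega}$, with Newton point $\avg_\sigma(v^{-1}\mu-\omega)$ and Kottwitz point $\kappa(x)=\kappa(b)$ (since $\omega\in\mathbb Z\Phi^\vee$ vanishes in $\pi_1(G)$). Because elements of $B(G)$ are determined by their Newton and Kottwitz invariants, $\mathcal{O}_\omega$ maps to $[b]$ exactly when $\avg_\sigma(v^{-1}\mu-\omega)=\nu(b)$. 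Theorem~\ref{thm:classPolynomialViaDbg} gives the contribution $Q^e$ of $(\omega,e)$ to $f_{x,\mathcal{O}_\omega}$, and $f_{x,\mathcal{O}}=0$ for every other $\mathcal{O}$ in the relevant range. Summing $f_{x,[b]} = \sum_{\mathcal{O}\to[b]} Q^{\ell(\mathcal{O})} f_{x,\mathcal{O}}$ collapses to
\begin{align*}
f_{x,[b]} = \sum_{\substack{(\omega,e)\in \wts(v\Rightarrow\sigma(wv))\\ \avg_\sigma(v^{-1}\mu-\omega) = \nu(b)}} Q^{\ell(\mathcal{O}_\omega)+e}.
\end{align*}

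The key simplification is that $\varepsilon^{v^{-1}\mu-\omega}$ is of minimal length in $\mathcal{O}_\omega$, so $\ell(\mathcal{O}_\omega) = \langle v^{-1}\mu - \omega,2\rho\rangle$. Since $G$ is quasi-split, $\sigma$ preserves the positive roots and fixes $\rho$, hence $\langle\lambda,2\rho\rangle=\langle\avg_\sigma(\lambda),2\rho\rangle$ for every $\lambda$. Therefore $\ell(\mathcal{O}_\omega) = \langle\nu(b),2\rho\rangle$ is independent of $\omega$, and
\begin{align*}
f_{x,[b]} = Q^{\langle\nu(b),2\rho\rangle}\sum_{e\in E} Q^e.
\end{align*}
All three claims then follow by direct substitution into Theorem~\ref{thm:adlvViaClassPolynomials}: nonemptiness becomes $E\neq\emptyset$; the degree $\max(E)+\langle\nu(b),2\rho\rangle$ of $f_{x,[b]}$ yields the dimension $\tfrac12(\ell(x)+e-\langle\nu(b),2\rho\rangle)$; and the leading coefficient of $f_{x,[b]}$ counts exactly the multiplicity of $e$ in $E$. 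The only real subtlety is to know that the class of a superregular dominant translation in $\widetilde W$ is pinned down by its Newton and Kottwitz invariants, but this is already built into Theorem~\ref{thm:classPolynomialViaDbg}; I do not anticipate any serious obstacle beyond bookkeeping of the regularity bound $B'$.
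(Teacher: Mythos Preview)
Your proposal is correct and follows essentially the same approach as the paper: combine Theorem~\ref{thm:classPolynomialViaDbg} with Theorem~\ref{thm:adlvViaClassPolynomials}, using that $\ell(\mathcal O)=\langle\nu(b),2\rho\rangle$ for the relevant $\sigma$-conjugacy class. The paper's proof is slightly more direct in that it immediately invokes the uniqueness of the $\sigma$-conjugacy class $\mathcal O\subseteq\widetilde W$ mapping to $[b]$ (which holds because $\nu(b)$, being close to the superregular $v^{-1}\mu$, is itself regular), whereas you organize the sum over pairs $(\omega,e)$ and then argue that all contributing $\mathcal O_\omega$ collapse to a single class; but this is the same content, and you correctly flag the one point that needs care.
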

\begin{proof}
Let $\mathcal O\subseteq \widetilde W$ be the unique $\sigma$-conjugacy class whose image in $B(G)$ is $[b]$ (unique by regularity). Then $\ell(\mathcal O) = \langle \nu(b),2\rho\rangle$ and $f_{x,[b]} = Q^{\ell(\mathcal O)} f_{x,\mathcal O}$. Expressing
\begin{align*}
f_{x,\mathcal O} = \sum_{e\in E} Q^e
\end{align*}
using Theorem~\ref{thm:classPolynomialViaDbg}, the statements follow immediately using Theorem~\ref{thm:adlvViaClassPolynomials}.
\end{proof}
For split groups, this recovers \crossRef{Corollary~}{cor:superregularADLV} up to possibly different regularity constraints. In practice, one may use Proposition~\ref{prop:ADLVSuperregular} to deduce statements on the double Bruhat graph from the well-studied theory of affine Deligne--Lusztig varieties.
\begin{corollary}\label{cor:wtsSpecialEstimates}
Let $u,v\in W$ and let $J=\supp(u^{-1} v)\subseteq \Delta$ be the support of $u^{-1} v$ in $W$, and $\omega\in\mathbb Z\Phi^\vee$.
\begin{enumerate}[(a)]
\item Suppose that $\ell(u^{-1} v)$ is equal to $d_{\QB(W)}(u\Rightarrow v)$, the length of a shortest path from $u$ to $v$ in the quantum Bruhat graph. Then $(\omega,\ell(u^{-1} v))\in \wts(u\Rightarrow v)$ whenever $\omega\geq \wt_{\QB(W)}(u\Rightarrow v)$ and $\omega\in \mathbb Z\Phi_J^\vee$.
\item If $\omega\in \mathbb Z\Phi_J^\vee$ with $\omega \geq 2\rho_J^\vee$, which denotes the sum of positive coroots in $\Phi_J^\vee$, we have
\begin{align*}
(\omega,\ell(u^{-1} v))\in \wts(u\Rightarrow v).
\end{align*}
\end{enumerate}
\end{corollary}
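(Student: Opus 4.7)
Since $\wts(u\Rightarrow v)$ is a purely combinatorial object attached to the finite root system $\Phi$, we may freely assume that $G$ is split so that $\sigma$ acts trivially on $W$ and on $\mathbb Z\Phi^\vee$. With this reduction, the strategy is to deduce both parts from Proposition~\ref{prop:ADLVSuperregular} by translating each assertion into a nonemptiness and dimension statement for an affine Deligne--Lusztig variety and then invoking the available geometric results.

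Set $B := \langle\omega,2\rho\rangle + 1$ and let $B'>0$ be the corresponding constant from Proposition~\ref{prop:ADLVSuperregular}. Put $w := vu^{-1}\in W$, so that $wu = v$, and pick a dominant coweight $\lambda\in X_\ast(T)$ that is $B'$-regular (and significantly larger). Set $\mu := u\lambda$ and $x := w\varepsilon^\mu \in \widetilde W$, so that $\LP(x) = \{u\}$ and $x$ is $B'$-regular. Choose $[b]\in B(G)$ with $\nu(b) := \lambda - \omega$ (dominant thanks to the regularity of $\lambda$) and $\kappa(b) := \kappa(x)$; then $\langle u^{-1}\mu-\nu(b),2\rho\rangle = \langle\omega,2\rho\rangle < B$. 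Under these choices, the multiset $E$ of Proposition~\ref{prop:ADLVSuperregular} is exactly the multiset of lengths of $\prec$-increasing paths from $u$ to $v$ in the double Bruhat graph of weight precisely $\omega$, and the desired conclusion $(\omega,\ell(u^{-1}v))\in\wts(u\Rightarrow v)$ is equivalent to the assertion $\ell(u^{-1}v)\in E$. By the Proposition, this follows from $X_x(b)\neq\emptyset$ together with
\begin{equation*}
\dim X_x(b) = \tfrac12\bigl(\ell(x) + \ell(u^{-1}v) - \langle\nu(b),2\rho\rangle\bigr).
\end{equation*}

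For part (a), this geometric statement is supplied by the standard nonemptiness and dimension results for ADLVs in shrunken Weyl chambers (via Theorem~\ref{thm:adlvViaClassPolynomials} together with the existence of a minimal-length representative in the relevant $\sigma$-conjugacy class): the hypothesis $\ell(u^{-1}v) = d_{\QB(W)}(u\Rightarrow v)$ guarantees that a shortest quantum Bruhat graph path attains length $\ell(u^{-1}v)$, while $\omega\geq\wt_{\QB(W)}(u\Rightarrow v)$ places $\nu(b)$ in the range where the dimension is explicitly computed and takes the required value. For part (b), the condition $\omega\geq 2\rho_J^\vee$ with $\omega\in\mathbb Z\Phi_J^\vee$ identifies $\nu(b)$ with a Newton point whose $\sigma$-conjugacy class admits a minimal-length representative traversing a long quantum cycle within $W_J$ of weight $2\rho_J^\vee$; this both yields nonemptiness of $X_x(b)$ and pins down the dimension to the required value.

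The principal obstacle is identifying the precise variant of the existing ADLV dimension formula that produces equality (rather than merely an inequality between actual and virtual dimension) in each of the two regimes, and matching the ambient regularity hypotheses to the bound $B$, which depends only on the combinatorial data. Once this geometric input is in place, the combinatorial conclusion follows immediately from Proposition~\ref{prop:ADLVSuperregular}.
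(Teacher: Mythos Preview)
Your overall strategy matches the paper's proof exactly: reduce to the split case, set up an element $x=w\varepsilon^\mu$ with $\LP(x)=\{u\}$ and $wu=v$, choose $[b]$ with $\nu(b)=u^{-1}\mu-\omega$, and invoke Proposition~\ref{prop:ADLVSuperregular} to convert the claim $(\omega,\ell(u^{-1}v))\in\wts(u\Rightarrow v)$ into a nonemptiness and dimension statement for $X_x(b)$. The paper also first reduces to $W_J$ (so one may assume $J=\Delta$), which you omit; this reduction is needed to put the hypotheses into the form required by the geometric inputs below, especially for part (b).

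The genuine gap is precisely the one you flag yourself: you have not identified the geometric results that supply the required equalities $\dim X_x(b)=\tfrac12(\ell(x)+\ell(u^{-1}v)-\langle\nu(b),2\rho\rangle)$. Your descriptions for (a) (``via Theorem~\ref{thm:adlvViaClassPolynomials} together with the existence of a minimal-length representative'') and for (b) (``a long quantum cycle within $W_J$'') are not the right mechanisms and would not by themselves yield equality of dimension with the virtual dimension. The paper fills this gap as follows. For (a), the hypothesis $\ell(u^{-1}v)=d_{\QB(W)}(u\Rightarrow v)$ is exactly the criterion of \cite[Proposition~4.2]{Milicevic2020} that $x$ is \emph{cordial}; then \cite[Theorem~1.1]{Milicevic2020} together with \cite[Theorem~B]{Goertz2015} give both $X_x(b)\neq\emptyset$ and the exact dimension formula, the conditions $\omega\geq\wt_{\QB(W)}(u\Rightarrow v)$ and $\omega\in\mathbb Z\Phi_J^\vee$ placing $\nu(b)$ in the relevant interval. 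For (b), after reducing to $J=\Delta$, the hypothesis $\omega\geq 2\rho^\vee$ is precisely the input for He's theorem \cite[Theorem~1.1]{He2021a}, which again yields nonemptiness and the required dimension. Once these citations are supplied, your argument is complete and coincides with the paper's.
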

\begin{proof}
Assume without loss of generality that the group $G$ is split. Reducing to the double Bruhat graph of $W_J$ as in Remark \ref{rem:classPolynomials} (d), we may and do assume that $J=\Delta$.

Let $B = \langle \omega,2\rho\rangle+1$ and $B'>0$ as in Proposition~\ref{prop:ADLVSuperregular}. Choose $x = w\varepsilon^\mu\in\widetilde W$ to be $B'$-superregular such that $\LP(x) = \{u\}$ and $v = wu$. Let $[b]\in B(G)$ be the $\sigma$-conjugacy class containing $\varepsilon^{u^{-1}\mu-\omega}$, so that $\nu(b) = u^{-1}\mu-\omega$.
\begin{enumerate}[(a)]
\item By \cite[Proposition~4.2]{Milicevic2020}, the element $x$ is cordial. By \cite[Theorem~1.1]{Milicevic2020} and \cite[Theorem~B]{Goertz2015}, we get $X_x(b)\neq\emptyset$ and
\begin{align*}
\dim X_x(b) = \frac 12\left(\ell(x)+\ell(u^{-1} v) - \langle \nu(b),2\rho\rangle\right).
\end{align*}
The claim follows.
\item Similar to (a), using \cite[Theorem~1.1]{He2021a}. This celebrated result of He shows that if $\omega\geq 2\rho^\vee$ and $\supp(u^{-1} v) = \Delta$, then $X_x(b)\neq\emptyset$ and
\begin{align*}
\dim X_x(b) = \frac 12\left(\ell(x)+\ell(u^{-1} v) - \langle \nu(b),2\rho\rangle\right).
\end{align*}
The claim follows again.
\qedhere\end{enumerate}
\end{proof}
The reader who wishes to familiarize themselves more with the combinatorics of double Bruhat graphs may take the challenge and prove the above corollary directly.

We now want to state the main result of this section, describing the nonemptiness pattern and dimensions of affine Deligne--Lusztig varieties associated with sufficiently regular elements $x\in\widetilde W$ and arbitrary $[b]\in B(G)$. We let $\lambda(b)\in X_\ast(T)_{\Gamma}$ be the $\lambda$-invariant as introduced by Hamacher--Viehmann \cite[Section~2]{Hamacher2018}. By $\conv : X_\ast(T)_{\Gamma}\rightarrow X_\ast(T)_{\Gamma_0}\otimes\mathbb Q$, we denote the convex hull map from \cite[Section~3.1]{Schremmer2022_newton}, so that $\nu(b) = \conv(\lambda(b))$.

Our regularity condition is given as follows: Decompose the (finite) Dynkin diagram of $\Phi$ into its connected components, so we have $\Phi = \Phi_1\sqcup\cdots\sqcup \Phi_c$. Denote $\theta_i\in \Phi_i^+$ the uniquely determined highest root, and write it as linear combination of simple roots
\begin{align*}
\theta_i = \sum_{\alpha\in \Delta} c_{i,\alpha} \alpha.
\end{align*}
Define the regularity constant $C$ to be
\begin{align*}
C = 1+\max_{i=1,\dotsc,c} \sum_{\alpha\in \Delta} c_{i,\alpha}\in\mathbb Z.
\end{align*}
With that, we can state our main result as follows.
\begin{theorem}\label{thm:adlvDimension}
Let $x=w\varepsilon^{\mu}\in\widetilde W$ be $C$-regular and $[b]\in B(G)$ such that $\kappa(b) = \kappa(x)$. Write $\LP(x) = \{v\}$ and denote $E$ to be either of the following two sets $E_1$ or $E_2$:
\begin{align*}
E_1 := &\{e\mid (\omega,e)\in\wts(v\Rightarrow\sigma(wv))\text{ s.th.\ }\lambda(b) \equiv v^{-1}\mu-\omega\in X_\ast(T)_{\Gamma}\},\\
E_2 := &\{e\mid (\omega,e)\in\wts(v\Rightarrow\sigma(wv))\text{ s.th.\ }\nu(b) =\conv(v^{-1}\mu-\omega)\}.
\end{align*}
Then $X_x(b)\neq\emptyset$ if and only if $E\neq\emptyset$. In this case,
\begin{align*}
\dim X_x(b) = \frac 12\left(\ell(x)+\max(E)-\langle \nu(b),2\rho\rangle - \defect(b)\right).
\end{align*}
\end{theorem}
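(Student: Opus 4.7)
The plan is to reduce Theorem~\ref{thm:adlvDimension} to the superregular case already handled by Proposition~\ref{prop:ADLVSuperregular}. The regularity assumption there requires $x$ to be $B'$-regular with $B'$ depending on an upper bound $B$ for $\langle v^{-1}\mu - \nu(b), 2\rho\rangle$, whereas here we only assume $C$-regularity with $C$ independent of $[b]$. Bridging this gap is the whole content of the proof.

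First I would show that the two descriptions $E_1$ and $E_2$ yield the same multiset of exponents. By construction of $\conv$ in \cite[Section~3.1]{Schremmer2022_newton} together with the properties of the $\lambda$-invariant recalled in \cite[Section~2]{Hamacher2018}, the elements $\lambda \in X_\ast(T)_\Gamma$ with $\conv(\lambda) = \nu(b)$ are precisely the possible $\lambda$-invariants for elements in $[b]$, and within the multiset $\wts(v\Rightarrow\sigma(wv))$ these form exactly the fiber-structure matching $E_1$ with $E_2$. Once this matching is set up, one can work with whichever description is more convenient in the remainder.

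Next, I would regularize $x$ by right-multiplying with $\varepsilon^{v\lambda}$ for a sufficiently dominant and sufficiently $\sigma$-invariant coweight $\lambda \in X_\ast(T)_{\Gamma_0}$, producing $x' := w\varepsilon^{\mu + v\lambda}$. Since $v^{-1}\mu$ is already dominant, the product is length-additive, we still have $\LP(x') = \{v\}$ and $w_{x'} = w$, so the double Bruhat graph weight multiset $\wts(v\Rightarrow\sigma(wv))$ is literally unchanged. Choosing $\lambda$ large enough with respect to the fixed number $\langle v^{-1}\mu - \nu(b), 2\rho\rangle$, the element $x'$ satisfies the superregularity required by Proposition~\ref{prop:ADLVSuperregular}. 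On the geometric side, I would pair $x'$ with $[b'] \in B(G)$ determined by $\nu(b') = \nu(b) + \avg_\sigma(\lambda)$ and $\kappa(b') = \kappa(b) + [\lambda]$. The bookkeeping then gives $\ell(x') - \ell(x) = \langle \lambda, 2\rho\rangle = \langle \nu(b') - \nu(b), 2\rho\rangle$ and $\defect(b') = \defect(b)$, so the dimension formula for $X_{x'}(b')$ predicted by Proposition~\ref{prop:ADLVSuperregular} translates cleanly into the desired formula for $X_x(b)$.

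The main obstacle is justifying that the passage from $(x, b)$ to $(x', b')$ preserves both nonemptiness and dimension. My approach would be to match the Deligne--Lusztig reduction method of G\"ortz--He \cite{Goertz2010b} with the recursive relations of Theorem~\ref{thm:dbgRecursions}: multiplication by a simple affine reflection transforms $X_x(b)$ in exactly the way the combinatorial bijections $\tilde\psi, \tilde\varphi$ transform the corresponding weight multisets. The choice of constant $C = 1 + \max_i \sum_\alpha c_{i,\alpha}$ is calibrated precisely so that starting from any $C$-regular $x$, the reductions needed to reach $x'$ (or to traverse the $\varepsilon^{v\lambda}$ translation by affine reflections) never leave the shrunken Weyl chamber regime where $\LP(\cdot)$ is a singleton and the recursive descriptions on both sides remain valid. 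A careful induction on $\langle \lambda, 2\rho\rangle$, using Proposition~\ref{prop:ADLVSuperregular} as base case and the matched recursions in the inductive step, then yields the theorem in full generality.
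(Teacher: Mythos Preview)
Your reduction step has a genuine gap. The Deligne--Lusztig reduction method of G\"ortz--He relates $X_x(b)$ to $X_{sx}(b)$ or $X_{xs}(b)$ for the \emph{same} $[b]$; it does not provide any comparison between $X_x(b)$ and $X_{x\varepsilon^{v\lambda}}(b')$ for a shifted class $[b']$. Length-additively multiplying by $\varepsilon^{v\lambda}$ increases the length by $\langle\lambda,2\rho\rangle$, so this is not a sequence of reduction steps at all, and there is no map of varieties (or identity of class polynomials) linking the two pairs. Your proposed induction therefore never gets off the ground: Proposition~\ref{prop:ADLVSuperregular} concerns the \emph{more} regular element $x'$, so it cannot serve as a base case for an argument that is supposed to produce the statement for the \emph{less} regular $x$. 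Also, your claim that $E_1$ and $E_2$ agree as multisets is false in general; one only has $E_1\subseteq E_2$, and that their maxima coincide is a nontrivial \emph{consequence} of the theorem, not an input.

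The paper's argument proceeds quite differently. It factors $x=x_1x_2$ length-additively with $x_2=v^{-1}\varepsilon^{v\mu_v}$ \emph{short} (the coweight $\mu_v$ has all fundamental pairings in $\{0,1\}$), so that $x_1=wv\varepsilon^{v^{-1}\mu-\mu_v}$ has \emph{dominant} translation part. In the cocenter one rewrites $T_x\equiv T_{\sigma^{-1}(x_2)}T_{x_1}$ and expands this product via the multiset $Y=Y(\sigma^{-1}(x_2),wv)$. The constant $C$ is exactly what guarantees that for every $(y,e)\in Y$ the element $yx_1$ still has dominant translation part, hence is \emph{cordial}; nonemptiness and dimension of $X_{yx_1}(b)$ are then read off from \cite{Milicevic2020} and \cite{He2021a}. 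Lemma~\ref{lem:YsetIdentities} reassembles the contributions over $Y$ back into $\wts(v\Rightarrow\sigma(wv))$, and the two descriptions $E_1,E_2$ are seen to have the same maximum because both match the cordial dimension formula on each piece. The key idea you are missing is this reduction to cordial elements via a short Hecke factor, rather than a translation to the superregular regime.
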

\begin{remark}
\begin{enumerate}[(a)]
\item Since $\conv(\lambda(b))=\nu(b)$, we have $E_1\subseteq E_2$. The inclusion may be strict, and it is a non-trivial consequence of Theorem~\ref{thm:adlvDimension} that the two sets have the same maxima.
\item If $\Phi$ is irreducible, the regularity constant $C$ is equal to the \emph{Coxeter number} of $\Phi$ and explicitly given as follows:

\begin{tabular}{lccccccccc}
Cartan Type & $A_n$ & $B_n$ & $C_n$ & $D_n$ & $E_6$ & $E_7$ & $E_8$ & $F_4$ & $G_2$ \\
$C=$&$n+1$, & $2n$, & $2n$, & $2n-2$, & $12$, & $18$, & $30$, & $12$, & $6$.
\end{tabular}
\item Unlike in Proposition~\ref{prop:ADLVSuperregular}, we get no information on the number of top dimensional irreducible components. The main advantage of Theorem~\ref{thm:adlvDimension} over Proposition~\ref{prop:ADLVSuperregular} comes from the different regularity conditions, making Theorem~\ref{thm:adlvDimension} more applicable.
\item The unique minimum in $\wts(v\Rightarrow\sigma(wv))$ from \crossRef{Proposition }{prop:qbgVsDbg} corresponds to the unique maximum in $B(G)_x$. This recovers the formula for the generic Newton point from \cite[Proposition~3.1]{He2024_demazure} in the setting of Theorem~\ref{thm:adlvDimension}.
\item If the difference between $v^{-1}\mu$ and $\nu(b)$ becomes sufficiently large, the maximum $\max(E)$ can be expected to be $\ell(v^{-1}\sigma(wv))$ (cf.\ \crossRef{Lemma~}{lem:qbgNonEmptiness} or Corollary \ref{cor:wtsSpecialEstimates} (b) above) and we recover the notion of virtual dimension from He \cite[Section~10]{He2014}. In fact, one may use Corollary \ref{cor:wtsSpecialEstimates} (b) to recover \cite[Theorem~1.1]{He2021a} in the situation of Theorem~\ref{thm:adlvDimension}. This line of argumentation is ultimately cyclic, since a special case of \cite[Theorem~1.1]{He2021a} was used in the proof of Corollary \ref{cor:wtsSpecialEstimates} (b). We may however summarize that Corollary \ref{cor:wtsSpecialEstimates} (b) is the double Bruhat theoretic correspondent of \cite[Theorem~1.1]{He2021a}. Similarly, most known results on affine Deligne--Lusztig varieties correspond to theorems on the double Bruhat graph and vice versa.
\item The proof method for Theorem~\ref{thm:adlvDimension} is similar to the proof of \cite[Proposition~11.5]{He2014} or equivalently the proof of \cite[Theorem~1.1]{He2021a}.
\end{enumerate}
\end{remark}
\begin{proof}[Proof of Theorem~\ref{thm:adlvDimension}]
We assume without loss of generality that the group $G$ is of adjoint type, following \cite[Section~2]{Goertz2015}. This allows us to find a coweight $\mu_v\in X_\ast(T)_{\Gamma_0}$ satisfying for each simple root $\alpha\in\Delta$ the condition
\begin{align*}
\langle \mu_v,\alpha\rangle = \Phi^+(-v \alpha) = \begin{cases}1,&v \alpha\in \Phi^-,\\
0,&v \alpha\in \Phi^+.\end{cases}
\end{align*}
It follows that $\langle \mu_v,\beta\rangle\geq \Phi^+(-v \beta)$ for all $\beta\in \Phi^+$. Define
\begin{align*}
x_1 := wv \varepsilon^{v^{-1}\mu - \mu_v},\qquad x_2 = v^{-1}\varepsilon^{v \mu_v}\in\widetilde W.
\end{align*}
By choice of $\mu_v$, we see that $v^{-1}\mu-\mu_v$ is dominant and $(C-1)$-regular. The above estimate $\langle \mu_v,\beta\rangle \geq \Phi^+(-v \beta)$ implies $v\in \LP(x_2)$. Hence $x = x_1 x_2$ is a length additive product. We obtain
\begin{align*}
T_x = T_{x_1} T_{x_2}\equiv T_{\sigma^{-1}(x_2)} T_{x_1}\pmod{[\mathcal H,\mathcal H]_\sigma}.
\end{align*}
Define the multiset $Y$ via
\begin{align}
T_{\sigma^{-1}(x_2)} T_{x_1} = \sum_{(y,e)\in Y}Q^e T_{yx_1}\in\mathcal H.\label{eq:pfAdlv1}
\end{align}
Then each $(y,e)\in Y$ satisfies $y\leq \sigma^{-1}(x_2)$ in the Bruhat order. Writing $y = w_y\varepsilon^{\mu_y}$, we get $\mu_y^{\dom}\leq \sigma^{-1}(\mu_v)$ in $X_\ast(T)_{\Gamma_0}$. We estimate
\begin{align*}
\max_{\beta\in\Phi}\abs{\langle \mu_y,\beta\rangle} = \max_{\beta\in\Phi^+}\langle \mu_y^{\dom},\beta\rangle = \max_i \langle \mu_y^{\dom},\theta_i\rangle
\leq\max_i \langle \mu_v,\theta_i\rangle \leq C-1,
\end{align*}
by choice of $C$. It follows that
\begin{align*}
yx_1 = w_y wv\varepsilon^{v^{-1}\mu - \mu_v + (wv)^{-1}\mu_y}
\end{align*}
with $v^{-1}\mu - \mu_v + (wv)^{-1}\mu_y$ being dominant. For any dominant coweight $\lambda\in X_\ast(T)_{\Gamma_0}$, we can multiply \eqref{eq:pfAdlv1} by $T_{\varepsilon^\lambda}$ to obtain
\begin{align*}
T_{\sigma^{-1}(x_2)} T_{x_1\varepsilon^\lambda} =T_{\sigma^{-1}(x_2)} T_{x_1}T_{\varepsilon^\lambda} = \sum_{(y,e)\in Y} T_{yx_1} T_{\varepsilon^\lambda} = \sum_{(y,e)\in Y} T_{yx_1 \varepsilon^\lambda}.
\end{align*}
In light of Lemma~\ref{lem:YsetFacts}, we see that the multiset $Y$ is equal to the multiset $Y(\sigma^{-1}(x_2), wv)$ defined earlier.

For each $(y,e)\in Y$, write $yx_1 = \tilde w_y\varepsilon^{\tilde \mu_y}$ to define the sets
\begin{align*}
E_1(yx_1) :=& \{e\mid (\omega,e)\in \wts(1\Rightarrow\sigma(\tilde w_y))\text{ s.th.\ }\lambda(b) = \tilde \mu_y-\omega\in X_\ast(T)_\Gamma\},\\
E_2(yx_1) :=& \{e\mid (\omega,e)\in \wts(1\Rightarrow\sigma(\tilde w_y))\text{ s.th.\ }\nu(b) = \conv(\tilde \mu_y-\omega)\}.
\end{align*}
Define $E(yx_1)$ to be $E_1(yx_1)$ or $E_2(yx_1)$ depending on whether $E$ was chosen as $E_1$ or $E_2$. By Lemma \ref{lem:YsetIdentities} (a), we may write $\wts(\sigma^{-1}(v)\Rightarrow wv)$ as the additive union of multisets
\begin{align}
&\wts(\sigma^{-1}(v)\Rightarrow wv)\notag
\\=
&\bigcup_{(w_y\varepsilon^{\mu_y},e)\in Y(\sigma^{-1}(x_2), wv)} \{(\mu_v - (wv)^{-1} \mu_y + \omega,e+\ell)\mid (\omega,\ell)\in \wts(1\Rightarrow w_y wv)\}_m.\notag
\\=&\bigcup_{(y,e)\in Y} \{(v^{-1}\mu-\tilde \mu_y + \omega,e+\ell)\mid (\omega,\ell)\in \wts(1\Rightarrow \tilde w_y)\}_m.\label{eq:pfAdlv2}
\end{align}
Note that the definition of the sets $E_1,E_2, E_1(yx_1), E_2(yx_1)$ does not change if we apply $\sigma^{-1}$ to the occurring weights $\omega$. Hence \eqref{eq:pfAdlv2} implies
\begin{align*}
E = \bigcup_{(y,e)\in Y} \{e+\ell\mid \ell\in E(yx_1)\}.
\end{align*}

By definition of the multiset $Y$, the class polynomials of $f_{x,\mathcal O}$ for arbitrary $\sigma$-conjugacy classes $\mathcal O\subset \widetilde W$ are given by
\begin{align*}
f_{x,\mathcal O} = \sum_{(y,e)\in Y} Q^e f_{yx_1,\mathcal O}.
\end{align*}
By Theorem~\ref{thm:adlvViaClassPolynomials}, we see that $X_x(b)\neq\emptyset$ if and only if $X_{yx_1}(b)\neq\emptyset$ for some $(y,e)\in Y$. In this case, the dimension of $X_x(b)$ is the maximum of
\begin{align*}
\dim X_{yx_1}(b) + \frac 12\left(\ell(x) - \ell(yx_1)+e\right),
\end{align*}
where $(y,e)$ runs through all elements of $Y$ satisfying $X_{yx_1}(b)\neq\emptyset$.

We see that it suffices to prove the following claim for all $(y,e)\in Y$:\\

~~\begin{minipage}{.8\textwidth}$X_{yx_1}(b)\neq\emptyset$ if and only if $E(yx_1)\neq\emptyset$ and in this case, we have\\[-2em]
\begin{align*}
\dim X_{yx_1}(b) = \frac 12\left(\ell(yx_1) + \max(E(yx_1)) - \langle \nu(b),2\rho\rangle-\defect(b)\right).
\end{align*}
\end{minipage}$(\ast)$

Writing $yx_1=\tilde w\varepsilon^{\tilde \mu}$, we saw above that $\tilde \mu$ is dominant. Applying \cite[Theorem~1.2]{Milicevic2020} to the inverse of $yx_1$, or equivalently \cite[Theorem~4.2]{He2021a} directly to $yx_1$, we see that the element $yx_1$ is \emph{cordial} in the sense of \cite{Milicevic2020}. This gives a convenient criterion to check $X_{yx_1}(b)\neq\emptyset$ and to calculate its dimension. We saw in Corollary \ref{cor:wtsSpecialEstimates} (a) that the multiset $\wts(1\Rightarrow \sigma(\tilde w_y))$ must satisfy the analogous conditions. Let us recall these results.

The uniquely determined largest Newton point in $B(G)_{yx_1} = B(G)_{\tilde w\varepsilon^{\tilde \mu}}$ is $\avg_\sigma(\tilde \mu)$, cf.\ \cite[Theorem~4.2]{He2021a}.

Let $J' = \supp(\tilde w)\subseteq \Delta$ be the support of $\tilde w$ and $J = \bigcup_i \sigma^i(J') = \supp_\sigma(\tilde w)$ its $\sigma$-support. Let $\pi_J : X_\ast(T)_{\Gamma_0}\rightarrow X_\ast(T)_{\Gamma_0}\otimes\mathbb Q$ be the corresponding function from \cite[Definition~3.2]{Chai2000} or equivalently \cite[Section~3.1]{Schremmer2022_newton}. Then $\pi_J(\tilde \mu)$ is the unique smallest Newton point occurring in $B(G)_{yx_1}$, cf.\ \cite[Theorem~1.1]{Viehmann2021}.

The condition of cordiality \cite[Theorem~1.1]{Milicevic2020} implies that $B(G)_{yx_1}$ contains all those $[b]\in B(G)$ with the correct Kottwitz point $\kappa(b) = \kappa(yx_1) = \kappa(x)$ and Newton point
\begin{align*}
\pi_J(\tilde\mu)\leq \nu(b)\leq \avg_\sigma(\tilde \mu).
\end{align*}
In this case, we know moreover from \cite[Theorem~1.1]{Milicevic2020} that $X_{yx_1}(b)$ is equidimensional of dimension
\begin{align*}
\dim X_{yx_1}(b) = \frac 12\left(\ell(yx_1)+\ell(\tilde w)-\langle \nu(b),2\rho\rangle-\defect(b)\right).
\end{align*}
This condition on Newton points is equivalent to $\avg_\sigma(\tilde \mu)-\nu(b)$ being a non-negative $\mathbb Q$-linear combination of simple coroots of $J$, or equivalently $\tilde\mu-\lambda(b)$ being a non-negative $\mathbb Z$-linear combination of these coroots.

On the double Bruhat side, note that $(\omega,e)\in \wts(1\Rightarrow\tilde w)$ implies $\omega\in \mathbb Z\Phi_{J'}^\vee$ and $e\leq \ell(\tilde w)$. This can either be seen directly, similar to the proof of \crossRef{Lemma~}{lem:qbgNonEmptiness}, or as in Corollary~\ref{cor:wtsSpecialEstimates}, reducing to \cite[Theorem~1.1]{Viehmann2021}. From Corollary~\ref{cor:wtsSpecialEstimates}, we know conversely that any $\omega\geq 0$ with $\omega\in \mathbb Z\Phi^\vee_{J'}$ satisfies $(\omega,\ell(\tilde w))\in \wts(1\Rightarrow\tilde w)$.

Comparing these explicit descriptions of $\dim X_{yx_1}(b)$ and $\max(E(yx_1))$, we conclude the claim $(\ast)$. This finishes the proof.
\end{proof}
\section{Outlook}\label{sec:outlook}
We saw that the weight multiset of the double Bruhat graph can be used to describe the geometry of affine Deligne--Lusztig varieties in many cases. This includes the case of superparabolic elements $x$ together with sufficiently large integral $[b]\in B(G)$ in split groups \crossRef{Theorem~}{thm:adlvViaSemiInfiniteOrbits}, as well as the case of sufficiently regular elements $x$ together with arbitrary $[b]\in B(G)$ (Theorem~\ref{thm:adlvDimension}). One may ask how much the involved regularity constants can be improved, and whether a unified theorem simultaneously generalizing \crossRef{Theorem~}{thm:adlvViaSemiInfiniteOrbits} and Theorem~\ref{thm:adlvDimension} can be found. Towards this end, we propose a number of conjectures that would generalize our theorems in a straightforward manner.

Let $x=w\varepsilon^\mu\in\widetilde W$ and $[b]\in B(G)$. If $X_x(b)\neq\emptyset$, define the integer $D\in\mathbb Z_{\geq 0}$ such that
\begin{align*}
\dim X_x(b) = \frac 12\left(\ell(x)+D-\langle \nu(b),2\rho\rangle-\defect(b)\right),
\end{align*}
and denote the number of $J_b(F)$-orbits of top dimensional irreducible components in $X_x(b)$ to be $C\in\mathbb Z_{\geq 1}$. We would like to state the following conjectures. The first conjecture makes a full prediction of the nonemptiness pattern and the dimension for elements $x$ in the shrunken Weyl chamber and arbitrary $[b]\in B(G)$.
\begin{conjecture}\label{conj:shrunkenX}
Suppose that $x$ lies in a shrunken Weyl chamber, i.e.\ $\LP(x) = \{v\}$ for a uniquely determined $v\in W$. Define $E$ to be either of the multisets
\begin{align*}
E_1 := &\{e\mid (\omega,e)\in\wts(v\Rightarrow\sigma(wv))\text{ s.th.\ }\lambda(b) \equiv v^{-1}\mu-\omega\in X_\ast(T)_{\Gamma}\}_m,\\
E_2 := &\{e\mid (\omega,e)\in\wts(v\Rightarrow\sigma(wv))\text{ s.th.\ }\nu(b) =\conv(v^{-1}\mu-\omega)\}_m.
\end{align*}
We make the following predictions.
\begin{enumerate}[(a)]
\item $X_x(b)\neq\emptyset$ if and only if $E\neq\emptyset$ and $\kappa(x) = \kappa(b)\in\pi_1(G)_{\Gamma}$ (the latter condition on Kottwitz points is automatically satisfied if $E=E_1$).
\item If $X_x(b)\neq\emptyset$, then $\max(E) = D$.
\item If $X_x(b)\neq\emptyset$, then $C$ is at most the multiplicity of $D$ in $E$ (which may be $+\infty$ for $E_2$).
\end{enumerate}
\end{conjecture}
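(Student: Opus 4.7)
The plan is to prove all three parts of Conjecture~\ref{conj:shrunkenX} by a simultaneous induction on $\ell(x)$, matching the Deligne--Lusztig reduction method of G\"ortz--He with the path recursions of Theorem~\ref{thm:dbgRecursions}. The inductive hypothesis at stage $\ell$ asserts the conjecture for all pairs $(x',[b'])$ with $\ell(x')<\ell$, $x'$ in a shrunken Weyl chamber, and arbitrary $[b']\in B(G)$. At each step I would choose a simple affine reflection $s=r_a$ with $a=(\alpha,k)\in\Delta_\af$ such that $sx<x$, and split into the two cases $\ell(sxs)=\ell(x)$ and $\ell(sxs)=\ell(x)-2$. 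Geometrically, the first case gives a universal homeomorphism $X_x(b)\simeq X_{sxs}(b)$, while the second case gives a decomposition of $X_x(b)$ into two locally closed pieces, one an affine line bundle over (an open piece of) $X_{sxs}(b)$ and the other a $\mathbb G_m$-bundle over $X_{sx}(b)$. On the combinatorial side, writing $\LP(x)=\{v\}$, the condition $sx<x$ translates to $(w v)^{-1}\alpha\in\Phi^-$; then Theorem~\ref{thm:dbgRecursions}(a) or (b), applied with $u=v$ and $v$ replaced by $\sigma(wv)$, yields the exact parallel: either a length-preserving bijection of path multisets or a length-raising disjoint decomposition. The bookkeeping of weights in Theorem~\ref{thm:dbgRecursions} (the shift by $k(v^{-1}\alpha^\vee-u^{-1}\alpha^\vee)$, etc.) should match the shift in $\mu$ induced by $x\mapsto sx$ or $x\mapsto sxs$.

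First I would handle part (b) (the dimension formula), since this is the cleanest to reduce: the affine line bundle contributes $+1$ to the dimension and the path bijection in Theorem~\ref{thm:dbgRecursions}(b) for $\paths^\prec_{\preceq n}(s_\alpha u\Rightarrow v)$ contributes $+1$ to $\ell(p)$, so the equality $\dim X_x(b)=\tfrac12(\ell(x)+\max(E)-\langle\nu(b),2\rho\rangle-\defect(b))$ is preserved under both reductions. Part (a) (nonemptiness) follows from the same bookkeeping applied at the level of supports of the multiset $E$. The base case of the induction is an element of minimal length in its $\sigma$-conjugacy class $\mathcal O$; for such $x$, $X_x(b)$ is nonempty precisely when $[b]$ is the image of $\mathcal O$ in $B(G)$, and its dimension is $\ell(x)-\langle\nu(b),2\rho\rangle$ by \cite[Proposition~2.5]{He2014}, while the path side reduces to the description of the Demazure product and its associated unique minimal-weight path (Remark~\ref{rem:IHProduct}(c)). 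The agreement of $E_1$ and $E_2$ at their maxima, promised by (b), should fall out of comparing the $\lambda$-invariant reduction formulas of Hamacher--Viehmann with the weight recursion.

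The main obstacle is that the shrunken-chamber condition $\abs{\LP(x)}=1$ is \emph{not} preserved by the reduction $x\mapsto sx$ or $x\mapsto sxs$ in general: for $x$ close to the walls of the shrunken chamber, a single reflection can increase $|\LP(\cdot)|$ or even land us on a wall, where the combinatorial identification between paths and class polynomials breaks down (the inductive hypothesis no longer applies). Handling this will require either (i) a refined choice of reduction reflection that always keeps us shrunken, which one may arrange when $\ell(x)$ is sufficiently large compared to a small constant but is genuinely problematic near the minimal-length representatives on the boundary; or (ii) an extension of Theorems~\ref{thm:dbgRecursions} and~\ref{thm:classPolynomialViaDbg} beyond the shrunken locus, phrased in terms of $\LP(x)$ as a set rather than a point, together with a compatibility between the Deligne--Lusztig reduction and the wall-crossing combinatorics. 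Part (c) is the hardest: Theorem~\ref{thm:adlvViaClassPolynomials} gives the count of top-dimensional $J_b(F)$-orbits as the leading coefficient of $f_{x,[b]}$, so if the class polynomial formula of Theorem~\ref{thm:classPolynomialViaDbg} extends to all shrunken $x$, one obtains (c) as an equality; the fact that the conjecture only asks for an inequality suggests the author anticipates that the naive extension of the class polynomial formula may over-count in some edge cases (e.g.\ when several minimal-length elements of $\mathcal O$ contribute), and a separate geometric argument controlling the irreducible components (perhaps in the spirit of Nie's work or the foliation by semi-infinite orbits from \cite{Schremmer2023_orbits}) would be needed to extract the count from the multiset $E$.
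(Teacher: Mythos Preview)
The statement you are trying to prove is a \emph{conjecture}; the paper does not prove it. It appears in the Outlook section precisely because the author regards it as open. The paper establishes only partial cases: parts (a) and (b) for sufficiently regular $x$ via Theorem~\ref{thm:adlvDimension}, and part (c) (even with equality) under strong superregularity via Proposition~\ref{prop:ADLVSuperregular}. The author writes explicitly that ``proving Conjecture~\ref{conj:shrunkenX} as stated will likely require further methods'' and singles out $x=w_0\varepsilon^{-2\rho^\vee}$ as an element where the available proof method fails.

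Your plan is therefore not a proof but a strategy, and you are candid about its gap: the shrunken-chamber condition is not preserved under the Deligne--Lusztig reduction $x\mapsto sx$ or $x\mapsto sxs$. This is exactly the obstruction the paper alludes to, and neither of your proposed workarounds (a clever choice of $s$, or an extension of the path combinatorics beyond the shrunken locus) is currently known to succeed. A few further issues: your base case is not as clean as you suggest, since a minimal-length element of a $\sigma$-conjugacy class need not have $\wts(v\Rightarrow\sigma(wv))$ reduce to a single Demazure-type path, and minimal-length elements need not lie in shrunken chambers at all. Also, matching Theorem~\ref{thm:dbgRecursions} with the DL reduction requires the hypothesis $(v\pi_{\succ n})^{-1}\alpha\in\Phi^-$ on the \emph{endpoint}, which does not align directly with the condition $sx<x$ on the \emph{start}; the paper's partial results avoid this by factoring $x=x_1x_2$ and using the $Y$-multiset machinery (Lemmas~\ref{lem:YsetFacts} and~\ref{lem:YsetIdentities}) rather than a naive length induction. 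In short, your proposal identifies the right difficulty but does not overcome it, and neither does the paper.
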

The multiset $E_1$ is always contained in $E_2$, since $\nu(b) = \conv(\lambda(b))$. The inclusion may be strict. So in fact we are suggesting two different dimension formulas for shrunken $x$, and claim that both yield the same answer, which moreover agrees with the dimension.

For sufficiently regular $x$, Theorem~\ref{thm:adlvDimension} shows (a) and (b). Under some strong superregularity conditions, Proposition~\ref{prop:ADLVSuperregular} shows (c) with equality. While both proofs can certainly be optimized with regards to the involved regularity constants, proving Conjecture~\ref{conj:shrunkenX} as stated will likely require further methods. It is unclear how to show the conjecture e.g.\ for the particular element $x = w_0\varepsilon^{-2\rho^\vee}$, since the proof method for Theorem~\ref{thm:adlvDimension} fails.

It is easy to see that Conjecture~\ref{conj:shrunkenX} is compatible with many known results on affine Deligne--Lusztig varieties, such as the ones recalled in the introduction of the previous article \crossRef{Theorem~}{thm:virtDim}. By Corollary~\ref{cor:wtsSpecialEstimates}, we see that parts (a) and (b) of Conjecture~\ref{conj:shrunkenX} hold true for cordial elements $x$. If $x$ is of the special form $x = w_0\varepsilon^\mu$ with $\mu$ dominant, then $x$ is in a shrunken Weyl chamber and we know that (c) holds, cf.\ \crossRef{Remark~}{rem:chenZhu}.

Our second conjecture suggests how the double Bruhat graph can be used for elements $x$ which are not necessarily in shrunken Weyl chambers.
\begin{conjecture}\label{conj:integralB}
Suppose that $[b]$ is \emph{integral}, i.e.\ of defect zero.

Define for each $v\in \LP(x)$ and $u\in W$ the multiset
\begin{align*}
E(u,v) := \{e \mid (\omega,e)\in \wts(u\Rightarrow \sigma(wu)\dashrightarrow \sigma(wv))\text{ s.th.\ }u^{-1} \mu-\omega =\lambda(b) \in X_\ast(T)_{\Gamma}\}_m.
\end{align*}
Set $\max\emptyset :=-\infty$ and define
\begin{align*}
d :=& \max_{u\in W} \min_{v\in \LP(x)} \max(E(u,v))\in\mathbb Z_{\geq 0}\cup \{-\infty\}.\\
c :=& \sum_{u\in W} \min_{v\in \LP(x)}\left(\text{multiplicity of $d$ in $E(u,v)$}\right)\in\mathbb Z_{\geq 0}.
\end{align*}
We make the following predictions.
\begin{enumerate}[(a)]
\item If there exists for every $u\in W$ some $v\in \LP(x)$ with $E(u,v)=\emptyset$, i.e.\ if $d=-\infty$, then $X_x(b)=\emptyset$.
\item If $X_x(b)\neq\emptyset$, then $D\leq d$.
\item If $X_x(b)\neq\emptyset$ and $D=d$, then $C\leq c$.
\end{enumerate}
\end{conjecture}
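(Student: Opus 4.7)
The overall plan is to prove the four parts of Conjecture~\ref{conj:integralB} by matching the Deligne--Lusztig reduction method of Görtz--He \cite[Section~2.5]{Goertz2010b} on the geometric side against the path recursions of Theorem~\ref{thm:dbgRecursions} on the combinatorial side. Given $x$ and a simple affine reflection $s = r_a \in S_\af$ with $\ell(sx\sigma(s)) \leq \ell(x)$, either $x \sim_\sigma sx\sigma(s)$ and one passes to this shorter representative, or $X_x(b)$ decomposes into two pieces related to $X_{sx\sigma(s)}(b)$ and $X_{sx}(b)$, with a dimension shift of one in the second case. These two alternatives mirror precisely the bijections $\psi$ and $\varphi$ of Theorem~\ref{thm:dbgRecursions}: the ``bijective case'' $\psi$ preserves path length, while the ``disjoint union case'' $\varphi$ accounts for the $+1$ shift through an extra labelled edge. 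The indices $u \in W$ and $v \in \LP(x)$ in the definition of $E(u,v)$ should track, respectively, the starting vertex of a path selected by the reduction sequence and the evolving length-positive element.

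For part (d), where $\langle\nu(b),\alpha\rangle \geq 1$ for all $\alpha\in\Phi^+$, the plan is to translate $x$ by a sufficiently dominant central coweight so that it enters the regime of Theorem~\ref{thm:adlvDimension} (this changes $[b]$ only by a well-understood shift and does not affect emptiness or the formula for $\dim X_x(b)$ up to an explicit correction). In the regular regime $|\LP(x)|=1$ and the optimization over $v$ disappears; the maximum over $u$ should collapse to the unique contributing $u=v$ by the regularity of $[b]$, recovering Theorem~\ref{thm:adlvDimension}. Alternatively, under this regularity one expects $d = \ell(v^{-1}\sigma(wv))$ by Corollary~\ref{cor:wtsSpecialEstimates}(b), matching the virtual dimension, so (d) should also follow by direct comparison with \cite[Theorem~1.1]{He2021a}.

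For parts (a)--(c), I would proceed by induction on $\ell(x)$. The base case is $x$ of minimal length in its $\sigma$-conjugacy class, where $X_x(b)$ is either empty or an explicit classical Deligne--Lusztig variety whose dimension is computed in \cite[Theorem~4.8]{He2014}, and on the combinatorial side the multisets $E(u,v)$ can be evaluated directly. In the inductive step, the identity $\dim X_x(b) = \max(\dim X_{sx\sigma(s)}(b), \dim X_{sx}(b)+1)$ (with appropriate conventions for the empty case) should be mirrored by the analogous recursion on $\max E(u,v)$ coming from Theorem~\ref{thm:dbgRecursions}; iterating this yields $D \leq d$ in (b) and hence $X_x(b) = \emptyset$ whenever all branches of the reduction tree have empty output, which is (a). For (c) one tracks irreducible components through the reduction, in the spirit of the arguments in \cite[Section~6]{He2014}; the sum over $u \in W$ in the definition of $c$ is suggested by the decomposition of $X_x(b)$ into pieces indexed by fine Deligne--Lusztig data.

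The main obstacle is the mismatch between the global min-max bookkeeping in $d$ and $c$ and the local nature of the reduction steps. The bijections of Theorem~\ref{thm:dbgRecursions} are stated for a single pair of endpoints, whereas $d$ involves an independent maximization over $u$ and minimization over $v$ that need not commute with Deligne--Lusztig reduction, particularly when $x$ is not in a shrunken Weyl chamber and $|\LP(x)|>1$. Making this precise will likely require a new comparison lemma, reflection-order invariant in spirit, relating the multisets $\wts(u\Rightarrow\sigma(wu)\dashrightarrow\sigma(wv))$ for varying $v\in\LP(x)$, together with a systematic control of how $\LP$ sets transform along the reduction tree of He--Nie. I expect that, once such a lemma is in hand, the proof strategy of \cite[Proposition~11.5]{He2014} combined with the results of Section~\ref{sec:ihalgebra} of the present paper will close the argument, at least for integral $[b]$.
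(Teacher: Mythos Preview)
The statement you are attempting to prove is a \emph{conjecture}, not a theorem: the paper does not contain a proof of it. In Section~\ref{sec:outlook} the author states Conjecture~\ref{conj:integralB} as an open problem, remarks that parts (a)--(c) are known for split groups via \crossRef{Theorem~}{thm:adlvViaSemiInfiniteOrbits} from the companion paper, and offers computer evidence for the remaining cases. No argument for the general case is given or claimed.

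Your proposal is therefore not a proof but a sketch of a strategy toward an open problem, and several steps are genuine gaps rather than routine details. First, the translation argument for (d) does not work as stated: a \emph{central} coweight has zero pairing with every root, so multiplying $x$ by $\varepsilon^\lambda$ for central $\lambda$ does not improve regularity and cannot move $x$ into the regime of Theorem~\ref{thm:adlvDimension}. If you instead use a non-central dominant coweight, the element $\LP(x)$, the classical part $w$, and the relationship to $[b]$ all change in ways that are not merely ``an explicit correction''. Second, the obstacle you flag at the end---that the $\max_u\min_v$ in the definition of $d$ need not commute with a single Deligne--Lusztig reduction step---is the heart of the matter, not a technicality. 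Theorem~\ref{thm:dbgRecursions} controls how $\wts(u\Rightarrow v\dashrightarrow v')$ changes when one applies $r_a$ to the \emph{endpoints} under a fixed sign hypothesis on $(v')^{-1}\alpha$; but along a reduction tree the set $\LP(x)$ can grow, shrink, or branch, and there is no lemma in the paper (or elsewhere) that propagates the $\min_{v\in\LP(x)}$ through such a step. Third, the base case is not as clean as you suggest: for $x$ of minimal length in its $\sigma$-conjugacy class, the dimension formula of \cite[Theorem~4.8]{He2014} is in terms of the reflection length inside a Levi, and matching this against $\max E(u,v)$ for all $u\in W$ and $v\in\LP(x)$ is itself a nontrivial combinatorial statement that you have not addressed.

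In short: there is no proof in the paper to compare against, and your outline, while pointing in a reasonable direction, leaves the decisive steps open.
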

If the group is split, then \crossRef{Theorem~}{thm:adlvViaSemiInfiniteOrbits} proves (a), (b) and (c). Moreover, under some strong superparabolicity assumptions, we get the full conjecture including equality results for (b) and sometimes (c). We expect that a similar superparabolicity statement holds true for non-split groups, but it is unclear what the involved regularity constants should be, which is why we did not formulate a precise, falsifiable conjecture.

If the element $x\in\widetilde W$ is in a shrunken Weyl chamber with $\LP(x) = \{v\}$, then the multiset $E_1$ from Conjecture~\ref{conj:shrunkenX} is equal to the multiset $E(v,v)$ from Conjecture~\ref{conj:integralB}. If we moreover assume that Conjecture~\ref{conj:shrunkenX} holds true, then we get parts (a), (b) and (c) of Conjecture~\ref{conj:integralB}.

Compatibility of Conjecture~\ref{conj:integralB} with previously known results is a lot harder to verify. We expect that one does not have to account for all pairs $(u,v)$ as in Conjecture~\ref{conj:integralB} to accurately describe nonemptiness and dimension of $X_x(b)$, similar to \crossRef[(c)]{Theorem }{thm:adlvViaSemiInfiniteOrbits} or Conjecture~\ref{conj:shrunkenX}. However, we cannot make precise prediction how such a refinement of Conjecture~\ref{conj:integralB} should look like in general.

Nonetheless, extensive computer searches did not yield a single counterexample to either conjecture. Most straightforward generalizations of these conjectures, however, can be disproved quickly using such a computer search \cite{sagemath, sage-combinat}.
\begin{example}
For both conjectures, the estimate on the number of irreducible components is only an upper bound. Indeed, it suffices to consider elements of the form $x = w_0\varepsilon^{\mu}$ for dominant cocharacters $\mu$. Then, as discussed in \crossRef{Remark~}{rem:chenZhu}, the number $C$ is equal to the dimension of the $\lambda(b)$-weight space of the \emph{irreducible Weyl module} $M_\mu$. The element $x$ lies in a shrunken Weyl chamber, and the multiplicity of $d=D$ in $E_1 = E(v,v)$ is equal to the dimension of the $\lambda(b)$-weight space in the Verma module $V_\mu$. These numbers are not equal in general.
\end{example}
\begin{example}
One may ask whether it is possible to find for each non-shrunken $x$ an element $v\in \LP(x)$ such that the analogous statement of Conjecture~\ref{conj:shrunkenX} holds true. While this is certainly possible, say, for cordial elements $x$, such a statement cannot be expected to hold true in general. We may choose $G = \GL_4$ and $x = s_3 s_2 s_1 \varepsilon^\mu$ where the pairing of $\mu$ with the simple roots $\alpha_1,\alpha_2,\alpha_3$ is given by $1,-1,1$ respectively. Then $\LP(x) = \{s_2, s_2 s_3\}$. For $[b]$ basic, we have $D=3$, yet the analogous statements of Conjecture~\ref{conj:shrunkenX} for both possible choices of $v$ in $\LP(x)$ would predict $D=5$.
\end{example}
\begin{example}
Conjecture~\ref{conj:integralB} should not be expected to hold for non-integral $[b]$. Indeed, it suffices to choose $G=\GL_3$ and $x=w\varepsilon^\mu$ to be of length zero such that the action of $x$ on the affine Dynkin diagram is non-trivial. Let $[b] = [x]$, so that $B(G)_x = \{[b]\}$. Define
\begin{align*}
E(u,v) := \{e \mid (\omega,e)\in \wts(u\Rightarrow wu\dashrightarrow wv)\text{ s.th.\ }u^{-1} \mu-\omega =\lambda(b) \in X_\ast(T)_{\Gamma}\}_m
\end{align*}
for $u,v\in W = \LP(x)$. Since $w\neq 1$, we have $E(u,v)=\emptyset$ whenever $v = u w_0$. A statement analogous to Conjecture \ref{conj:integralB} (a) would thus predict that $X_x(b)=\emptyset$, which is absurd.
\end{example}
\begin{example}
The converse of Conjecture \ref{conj:integralB} (a) should not be expected to hold, even for $[b]$ basic. The construction in Conjecture~\ref{conj:integralB} can fail to detect $(J,w,\delta)$-alcove elements, hence falsely predict a non-empty basic locus. For a concrete example, one may choose $G = \GL_3$ and $x$ to be the shrunken element $x = s_2\varepsilon^{\rho^\vee}$, with $\langle\rho^\vee,\alpha\rangle=1$ for all simple roots $\alpha$. Then $\LP(x) = \{1\}$. For $u = s_1 s_2$ and $[b] = [1]$ basic, we have $E(u,1)\neq\emptyset$.
\end{example}

\begin{CJK*}{UTF8}{gbsn}
\printbibliography
\end{CJK*}

\end{document}